\documentclass{amsart}

\usepackage{amsmath, amssymb,amsfonts, amscd, xr}
\usepackage{glossaries}
\usepackage{imakeidx}
\usepackage[shortlabels]{enumitem}
\usepackage{hyperref}
\externaldocument[AQ-]{aq/aq.bak}

\input{olddefs.tex}

\numberwithin{claim}{subsection}

\newcounter{my_enumerate_counter}

\setcounter{secnumdepth}{4}
\setcounter{tocdepth}{2}
\title{Biba's trick, with applications}

\author{Ilijas Farah}
\address{Department of Mathematics and Statistics\\
	York University\\
	4700 Keele Street\\
	North York, Ontario\\ Canada, M3J 1P3\\
	and 
	Ma\-te\-ma\-ti\-\v cki Institut SANU\\
	Kneza Mihaila 36\\
	11\,000 Beograd, p.p. 367\\
	Serbia}
\email{ifarah@yorku.ca}
\urladdr{https://ifarah.mathstats.yorku.ca}
\thanks{Partially supported by NSERC}
\date{\today}

\begin{document}
\maketitle

\begin{abstract}
	We give another bit of evidence that forcing axioms provide proper framework for rigidity of quotient structures, by improving the OCA lifting theorem proved by the author in late 20th century and greatly simplifying its proof. In the assumptions of this theorem, Martin’s Axiom is weakened to $\MAsigma$ and, in case of countably generated ideals, completely removed. (In case of the Frech\' et ideal this was done by De Bondt, Vignati, and the author.)  We also extend the conclusion of author’s 2004 lifting theorem from a lifting result for countably 3204-determined ideals to one for countably 80-determined ideals and  also weaken $\PFA$ to $\OCAT$ and $\MAsigma$ in its assumptions. 
\end{abstract}

This paper presents the latest progress in the corona rigidity program as surveyed in \cite{farah2022corona}. It is about the oldest (and most studied) thread in this program, asking when an isomorphism between quotient Boolean algebras of the form $\cPN/\cI$ for an analytic ideal $\cI$ on $\bbN$ has a completely additive lifting.\footnote{In this paper every lifting is a set-theoretic lifting, with no algebraic properties assumed unless explicitly stated.}  
Here $\cPN$ is considered with the Cantor set topology and `analytic' means `continuous image of a Polish space', while a `completely additive lifting' is one of the form 
\[
X\mapsto h^{-1}(X)
\]
for a function $h\colon \bbN\to \bbN$.  
Such a lifting theorem is typically proven in two stages. The first stage uses forcing axioms to produce a continuous lifting. The second stage uses Ulam-stability to produce a completely additive lifting from a continuous one. This has been done for a large class of analytic ideals $\cI$, including all ideals mentioned in this paper.  
 The second stage deals with absolute statements and belongs to a different story  (see \cite[\S 2 and \S 5.3]{farah2022corona}).

For ideals $\NWD( \bbQ)$, $\NULL( \bbQ)$,
and $\cZ_W$ see Lemma~\ref{L.SolMa}. 
Pathological ideals exist (this follows from \cite[Theorem~1.8.6]{Fa:AQ}) but they will play no role here.  

\begin{thm} \label{T.Main} Assume  $\OCAT$  and $\MAsigma$. Suppose that $\cI$ is a nonpathological  $F_\sigma$ ideal, a nonpathological analytic P-ideal, or any of the ideals $\NWD( \bbQ)$, $\NULL( \bbQ)$,
	or $\cZ_W$. 	If $\cI'$ is an analytic ideal and  $\Phi\colon \cP(\bbN)/\cI'\to \cP(\bbN)/\cI$ is an isomorphism, then it has a completely additive lifting. In particular, $\cI$ and $\cI'$ are Rudin--Keisler isomorphic. 
	\end{thm}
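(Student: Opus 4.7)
The proof will proceed in the two stages described in the introduction. \textbf{Stage 1} uses $\OCAT + \MAsigma$ to produce a continuous lifting of $\Phi$; \textbf{Stage 2} applies absolute Ulam-stability results for the codomain ideals listed in the theorem to upgrade this continuous lifting to a completely additive one.

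For Stage 1, I would start from an arbitrary set-theoretic lifting $\phi\colon \cP(\bbN)\to\cP(\bbN)$ of $\Phi$ and work inside a second-countable space $T$ of ``finite approximations'' to a continuous lifting, namely finite partial assignments coherent with $\phi$ on their support modulo $\cI$. I would define an open coloring $K\colon [T]^2\to 2$ by setting $K(s,t)=1$ iff the pair $(s,t)$ exhibits an incompatibility modulo $\cI$, in the sense that no single continuous lifting of $\Phi$ can extend both $s$ and $t$. Applying $\OCAT$ either produces an uncountable $K$-homogeneous $1$-set, which I would rule out by translating the resulting incompatibility back through $\Phi$ and contradicting that $\Phi$ is a well-defined map on $\cP(\bbN)/\cI'$, or it yields a cover of $T$ by countably many $K$-homogeneous $0$-sets $T_n$. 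One then uses $\MAsigma$ to pick, within some $T_n$, a filter meeting sufficiently many dense sets so that its amalgamation is a continuous lifting $F\colon \cP(\bbN)\to\cP(\bbN)$ of $\Phi$. The main technical obstacle --- and the content of \emph{Biba's trick} of the title --- is that a naive choice of approximations produces an amalgamation poset that is only ccc, which is why earlier arguments invoked full $\MA$. The trick must be a reorganization of the space of approximations (presumably by imposing uniform conditions on their supports so that compatibility within a link-class is automatic) that exhibits the relevant poset as $\sigma$-linked, enabling the replacement of $\MA$ by $\MAsigma$. I expect this to be the deepest point of the argument.

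For Stage 2, with a continuous lifting $F$ in hand, I would invoke the Ulam-stability theorem appropriate to the structural type of $\cI$. For nonpathological $F_\sigma$ ideals and nonpathological analytic P-ideals this is established through the machinery of \cite{Fa:AQ}; for $\NWD(\bbQ)$, $\NULL(\bbQ)$, and $\cZ_W$ it follows from Lemma~\ref{L.SolMa}. The conclusion is a function $h\colon \bbN\to\bbN$ such that $F(X)$ and $h^{-1}(X)$ differ by an element of $\cI$ for every $X\subseteq\bbN$, so $X\mapsto h^{-1}(X)$ is the required completely additive lifting of $\Phi$. Because $\Phi$ is an isomorphism, unwinding the lifting in both directions shows that $h$ witnesses a Rudin--Keisler isomorphism between $\cI'$ and $\cI$.
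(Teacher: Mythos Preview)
Your two-stage outline is correct at the highest level, but your Stage~1 sketch misidentifies both the architecture and the roles of the hypotheses. The paper does not build a continuous lifting by amalgamating finite approximations via a single open coloring and a $\sigma$-linked forcing. Instead, it works with the ideal $\Jcont(\Phi)=\{A\mid \Phi\text{ has a continuous lifting on }\cP(A)\}$ and proves two separate things: (i) using $\OCAinfty$ (a consequence of $\OCAT$), $\Jcont$ intersects every perfect tree-like almost disjoint family, hence is nonmeager (Proposition~\ref{P.general.Jcont.nonmeager}); and (ii) using $\OCAsharp$ (another consequence of $\OCAT$) together with $\MAsigma$, the local continuous liftings $F^A$ for $A\in\Jcont$ can be uniformized into a single continuous $\cK^{80}$-approximation (Proposition~\ref{P.uniformization}). $\MAsigma$ enters only in step~(ii), via Lemma~\ref{L.MAsl.thin}, where it is used to thin an uncountable $Z\subseteq\twoo$ so that the intervals attached to branch points align with a fixed partition of $\bbN$; the $\sigma$-linked poset there has nothing to do with amalgamating approximations into a lifting. \emph{Biba's trick} is not a device for producing $\sigma$-linked posets; it is the terminal combinatorial argument (see the end of the proofs of Theorem~\ref{T.OCAsharp-Fin} and Proposition~\ref{P.uniformization}) where one attempts to recursively build sequences $n_i,k_i,l_i$ witnessing incoherence among the $F^A$, observes that $[\cX_n]^2\cap\cV_n=\emptyset$ forces this recursion to terminate, and reads off a uniformizing function from the stage at which it halts.

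Two smaller corrections. In Stage~1 you omit the step passing from ``continuous lifting on a nonmeager ideal'' to ``continuous lifting on $\cP(\bbN)$''; this uses that $\Phi$ is an isomorphism (see the proof of Theorem~\ref{T.Main}). In Stage~2, Lemma~\ref{L.SolMa} says only that $\NWD(\bbQ)$, $\NULL(\bbQ)$, $\cZ_W$ are strongly countably determined; the Ulam-stability input for these ideals (and for nonpathological $F_\sigma$ ideals) is \cite{KanRe:New,KanRe:Ulam}, while for nonpathological analytic P-ideals it is \cite[Theorem~1.9.1]{Fa:AQ}.
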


Theorem~\ref{T.Main} is a consequence of the main technical result of this paper, Theorem~\ref{T.OCA-l}. 
This result strengthens the OCA Lifting Theorem of \cite{Fa:AQ}.  The proof given here replaces an excruciating thirty pages of \cite[\S 3.8--\S 3.13]{Fa:AQ}.   It also strengthens \cite{Fa:Luzin} where PFA had been used to prove a weaker result. 
The results presented here are proven by combining the methods of \cite{Fa:AQ} and \cite{Fa:Luzin} with a refinement of the techniques introduced in \cite{de2023trivial}. 
The question whether forcing axioms (even MM, \cite{FoMaShe:Martin}) can be used to extend the conclusion of Theorem~\ref{T.OCA-l} to ideals that are not countably determined by closed approximations (see \S\ref{S.CountablyDetermined}) or even not $F_{\sigma\delta}$ is as open as ever  (see \cite[\S 7.2]{farah2022corona}).

The following is a consequence of Theorem~\ref{T.OCAsharp-Fin}, proven at the end of \S\ref{S.UniformizationFin}, 

\begin{thm}\label{T.OCAonly}
	Assume $\OCAT$. If $\cI$ is a countably generated ideal on $\bbN$ then all automorphisms of $\cPN/\cI$ are trivial (i.e., given by a bijection between sets whose complements belong to $\cI$). 
\end{thm}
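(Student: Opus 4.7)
\emph{Plan.}
The plan is to reduce to a short list of canonical countably generated ideals on $\bbN$ --- namely $\Fin$ itself and $\FinO=\Fin\times\emptyset$ (the latter viewed as an ideal on $\bbN\times\bbN$) --- and then invoke an $\OCAT$-only triviality statement for each of them.

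Let $\cI$ be a countably generated ideal and fix a $\subseteq^*$-increasing sequence $(B_n)$ of generators.  Put $E_n=B_n\setminus B_{n-1}$ (with $B_{-1}=\emptyset$) and $R=\bbN\setminus\bigcup_n B_n$.  Discard the trivial case where some $B_n$ is cofinite (then $\cPN/\cI$ is finite and the conclusion is vacuous).  An elementary case analysis on whether $R$ is infinite and on how many $E_n$ are infinite shows that, after grouping consecutive $E_n$'s and passing through a bijection of $\bbN$, $\cPN/\cI$ is isomorphic to a direct product of at most two factors, each of the form $\cPN/\Fin$ or $\cPN/\FinO$.  The decomposition is visible in the quotient as a complementary pair of idempotents and is intrinsic modulo a possible exchange of two isomorphic factors; composing $\Phi$ with the canonical exchange if need be, one may assume that $\Phi$ splits as a product of an automorphism of each factor.

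For the $\cPN/\Fin$ factor, the De Bondt--Vignati--Farah theorem cited in the abstract gives triviality under $\OCAT$ alone.  For the $\cPN/\FinO$ factor, apply Theorem~\ref{T.OCAsharp-Fin}: an automorphism of $\cPN/\FinO$ is encoded by a coherent family of partial functions along the columns $\{n\}\times\bbN$, and the uniformization statement produces a single almost-bijection $h\colon\bbN\times\bbN\to\bbN\times\bbN$ satisfying $\Phi([X])=[h^{-1}(X)]$ modulo $\FinO$ for every $X$.  This is exactly the statement that the automorphism is trivial, and assembling the two pieces yields the theorem.

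The substance of the argument sits entirely in Theorem~\ref{T.OCAsharp-Fin}: one must push through the uniformization of \cite{de2023trivial} using only $\OCAT$, with no appeal to $\MAsigma$, in the presence of the vertical $\Fin$-structure that $\FinO$ imposes.  Once that theorem is available, the deduction of Theorem~\ref{T.OCAonly} is essentially bookkeeping on the shape of countably generated ideals.
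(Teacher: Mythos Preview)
Your reduction to canonical forms is unnecessary and, as written, contains a genuine gap. Theorem~\ref{T.OCAsharp-Fin} is stated for \emph{every} countably generated ideal $\cI$, not just for $\Fin$ and $\FinO$; the paper simply applies it directly to the given $\cI$, obtains a continuous lifting of $\Phi$ on the nonmeager ideal $\Jcont$, and then upgrades this (via the Ulam-stability results of \cite[\S 1]{Fa:AQ} together with the assumption that $\Phi$ is an isomorphism) to a completely additive lifting $X\mapsto h^{-1}(X)$ with $\bbN\setminus\dom(h)\in\cI$. No case analysis enters.

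The gap in your approach is the assertion that the direct-product decomposition is ``intrinsic modulo a possible exchange of two isomorphic factors.'' It is not. In $\cP(\bbN)/\Fin\times\cP(\bbN^2)/\FinO$ there are many complementary pairs $(e,1-e)$ giving a decomposition into a copy of $\cP(\bbN)/\Fin$ and its complement: for any $D\subseteq\bbN^2$ meeting each column in a finite set, the principal ideal below $(0,[D])$ is already isomorphic to $\cP(\bbN)/\Fin$. An automorphism can therefore send $(1,0)$ to such an element, and your splitting into a product of automorphisms fails. (The fix is not to split but to observe that the products collapse: $\cP(\bbN)/\Fin\times\cP(\bbN^2)/\FinO\cong\cP(\bbN^2)/\FinO$ via a bijection sending the $\Fin$-part to a transversal of the columns, so only two genuine cases remain---but at that point you are just re-deriving the scope of Theorem~\ref{T.OCAsharp-Fin}.)

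A smaller point: Theorem~\ref{T.OCAsharp-Fin} delivers a continuous lifting on the nonmeager ideal $\Jcont$, not immediately a global almost-bijection $h$. The passage from the former to the latter uses that $\Phi$ is an isomorphism and the lifting machinery of \cite[\S 1]{Fa:AQ}; you should not fold that step into the citation of Theorem~\ref{T.OCAsharp-Fin}.
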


Consistency of the assertion that all automorphisms of $\cPN/\Fin$ are trivial was proven in \cite{Sh:Proper}. It was show to follow from PFA in \cite{ShSte:PFA}, from $\OCAT$ and $\MA$ in \cite{Ve:OCA}, and from $\OCAT$ alone in  \cite[Theorem 1]{de2023trivial}. For the other countably generated ideal on $\bbN$, $\FinO$, the conclusion of Theorem~\ref{T.OCAonly} was proven from $\OCAT$ and $\MA$ in \cite[Theorem 1.9.2]{Fa:AQ}.

For current state of the art in the quotient rigidity  program see~\cite{farah2022corona}. Results such as Theorem~\ref{T.Main} may give evidence that forcing axioms provide a preferable ambient theory (as in \cite{magidor201some}) for study of quotient structures ranging (at least) from Boolean algebras to \cstar-algebras.

\subsection*{Apologies and acknowledgments} 
 I am grateful to Biba the~47\% miniature Schnauzer for taking me to Toronto's Riverdale  Park  West on October 6, 2024.  The germ of the idea that resulted in the proof of \cite[Proposition~5.6]{de2023trivial} and the simplification of the proof of the OCA lifting theorem of \cite{Fa:AQ} that is at the core of this paper occurred  during this pleasant walk. 
I would also like to thank Alan Dow and Ben De Bondt for pointing out to a few typos in the original version of this paper, and to the anonymous referee for a very helpful report. 
\section{Preliminaries}

In this section we give preliminaries, partly in order to introduce concepts and notation and partly because some of the results are not available in the literature in the form appropriate for us. 
We will write $A\Subset B$ if $A\subseteq B$ and $A$ is finite. 

\subsection{Approximations to ideals and ldeals countably determined by closed approximations}
\label{S.CountablyDetermined}

For subsets $\cA$ and $\cB$ of $\cPN$ consider their `Minkowski union', 
\[
\cA\ucup \cB=\{A\cup B\mid A\in \cA,B\in \cB \}. 
\]
The notion of a closed approximation to an ideal first appeared in \cite{Just:WAT}. 

\begin{definition}\label{Def.hereditary}	
	A set $\cK\subseteq \p(\N)$ is \emph{hereditary} if $A\subseteq B\in \cK$ implies $A\in \cK$.
	If $\cK$ and $ \calL$ are families of subsets of $\N$, then
	$$
	\cK\ucup  \calL=\{K\cup L: K\in \cK\text{ and } L\in  \calL\}
	$$
	and
	$$
	\cK^k=\cK\ucup \dots \ucup \cK=\{A_1\cup A_2\cup \dots \cup A_k: A_i\in \cK\text{ for $i\leq k$}\}.
	$$
	Note that $\cK\ucup \Fin=\bigcup_n \{A\mid A\setminus n\in \cK\}$. We say that $\cK\subseteq \cP(\bbN)$ is an \emph{approximation to $\cI$} if it is hereditary  and $\cI\subseteq \cK\ucup \Fin$. An approximation is closed if it is closed (as a subset of the Cantor space).  
\end{definition}

We will be primarily interested in closed approximations. 
The following definition comes from \cite{Fa:Luzin}.

\begin{definition}  \label{D.ctbly.det}
	An ideal $\cI$ is \emph{countably determined} by closed  approximations
	if there are closed   hereditary
	sets $\cK_n$ ($n\in \N$) such that
	\begin{enumerate}
		\item [(a)] $
		\I=\bigcap_{n=1}^\infty (\cK_n\ucup \Fin).
		$
	\end{enumerate}
	If in addition for some  $d\in \N$  we have
	\begin{enumerate}
		\item [(b)] $
		\I=\bigcap_{n=1}^\infty (\cK_n^d\ucup \Fin)
		$
	\end{enumerate}
	we say that $\cI$ is \emph{countably $d$-determined} by closed (analytic, etc.)  approximations.
	If there are closed hereditary $\cK_n$ such that {\rm(b)} holds for all $d\in \N$,
	we say that $\cI$ is \emph{strongly countably determined}
	by $\cK_n$ ($n\in \N$).
\end{definition}

	Clearly, every ideal countably determined by closed approximations
is $F_{\sigma\delta}$.
Our Theorem~\ref{T.OCA-l}, will be proven
for ideals that are countably 80-determined by closed approximations.
No care is taken to assure the optimality of this constant, in particular because all known $F_{\sigma\delta}$ ideals are countably 
$d$-determined by closed approximations for all $d\in \bbN$.  

\begin{lemma} \label{L.StronglyCountablyDetermined}All of the following ideals are strongly countably determined by closed approximations. 
	\label{L.SolMa}
	\begin{enumerate}
		\item  \label{1.L.Strongly} Every $F_\sigma$ ideal.
		\item\label{2.L.Strongly}  Every analytic P-ideal. 
		\item \label{3.L.Strongly} The ideals 
		\begin{align*}
	\NWD( \bbQ)&=\{A\subseteq  \bbQ\cap [0,1]: A\text{ is nowhere dense}\},\\
	\NULL( \bbQ)&=\{A\subseteq  \bbQ\cap [0,1]: \overline A\text{ has Lebesgue measure 0}\}, \\
	\cZ_W&=\biggl\{A: \limsup_n \sup_k \frac{|A\cap [k,k+n)|}n=0\biggr\}.
\end{align*}		
	\end{enumerate}
\end{lemma}

\begin{proof}
	\eqref{1.L.Strongly} 	By \cite[Lemma 6.3]{Laf:Forcing} or 
	\cite[Lemma 1.2(c)]{Maz:F-sigma}, there is a closed hereditary set $\cK$ such that $\cI=\cK\ucup \Fin$.  Since $\cI$ is an ideal, we have $\cK^m\subseteq \cI$ for all $m$, and therefore $\cI=\cK^m\ucup \Fin$ for all $m$. We can let $\cK_n=\cK$ for all $n$.

	\eqref{2.L.Strongly} \cite[Theorem~3.1]{Sol:AnalyticII} implies that 
	$\cI$ is an analytic P-ideal if  and only if it is equal to $\{A\subseteq \bbN\mid \lim_n \varphi(A\setminus n)=0\}$ for
	some lower semicontinuous submeasure $\varphi$ on $\bbN$. Let $\cK_n=\{A\mid  \varphi(A)\leq 2^{-n}\}$. Then $\cI=\bigcap_n (\cK_n\ucup \Fin)$ and $\cK_{n+1}^2=\cK_{n}$ for all $n$. By induction, $\cI=\bigcap_n (\cK_n^m\ucup \Fin)$ for all $m$. 

\eqref{3.L.Strongly} This was proved in \cite[Lemma~2.6]{Fa:Luzin}. 
\end{proof}

Conjecturally, all $F_{\sigma\delta}$ ideals are strongly countably determined by closed approximations. Theorem~\ref{T.OCA-l} below is our main result. The fact that we do not assume that $\ker(\Phi)\supseteq \Fin$ leads to some technical difficulties, but this is well worth it. For example, the ability to describe all copies of $\cP(\bbN)$ inside the quotient $\cP(\bbN)/\Fin$ was used to prove that $\OCAT$ implies that the Lebesgue measure algebra does not embed into $\cPN/\Fin$ (\cite{DoHa:Lebesgue}).

\begin{theorem} 
	\label{T.OCA-l} Assume  $\OCAT$  and $\MAsigma$ and that 
	$\cI$ is a countably 80-determined ideal.  Then for every homomorphism 
	$\Phi\colon \p(\N)\to \p(\N)/\cI$
	there is a continuous function $F\colon \cPN\to \cPN$ that lifts $\Phi$ on a nonmeager  ideal. 
\end{theorem}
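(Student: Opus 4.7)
The plan is to adapt the three-stage template of the OCA Lifting Theorem from \cite{Fa:AQ}, streamlined using the approach of \cite{de2023trivial}: first produce finitary local approximations of $\Phi$ on successive blocks of $\bbN$; second, apply $\OCAT$ to a carefully chosen open coloring on the Polish space of local approximations to split it into countably many coherent pieces; third, use $\MAsigma$ to amalgamate those pieces into a single continuous lifting $F$ and identify the ccc$/\Fin$ ideal on which $F$ lifts $\Phi$.

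Concretely, fix closed hereditary sets $(\cK_n)_n$ witnessing that $\cI = \bigcap_n (\cK_n^{80} \ucup \Fin)$, a partition $\bbN = \bigsqcup_m I_m$ into finite intervals, and a set-theoretic lifting $a \mapsto \Psi(a)$ for $a \Subset \bbN$ with $\Phi(a)$ equal to the $\cI$-class of $\Psi(a)$. Form the Polish space $X$ whose points are sequences $s = \langle (a_m, b_m) : m \in \bbN \rangle$ with $a_m \subseteq I_m$ and $b_m \Subset \bbN$, viewed as a tentative lifting $\bigcup_m a_m \mapsto \bigcup_m b_m$. Define the coloring $[X]^2 = K_0 \sqcup K_1$ by putting $\{s,t\} \in K_1$ when, on their longest common initial segment, the discrepancy between the associated tentative liftings fails a $\cK_n$-control condition with $n$ scaled to the length of the segment. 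A careful formulation makes $K_0$ open, and the role of Biba's trick is to arrange matters so that a single $K_1$-edge witnesses a failure that consumes only a small, fixed number of copies of $\cK_n$.

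Applying $\OCAT$ yields two alternatives. The $1$-homogeneous case is ruled out by the following combinatorial estimate: from any uncountable $K_1$-homogeneous set one extracts, via a $\Delta$-system refinement, finitely many pairwise $K_1$-related points whose combined discrepancies sum to a set $C \subseteq \bbN$ with $\Phi(C) \notin \cK_n^{80} \ucup \Fin$ for some $n$, contradicting $\Phi(C) \in \cI$. The constant $80$ is tuned precisely so that this budget argument closes; this is where the counting that in \cite{Fa:AQ} required the constant $3204$ is compressed by the trick of \cite{de2023trivial}. We are therefore left with $X = \bigcup_k X_k$ with each $X_k$ being $K_0$-homogeneous.

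In the final step, each $X_k$ ordered by end-extension is $\sigma$-linked by $K_0$-homogeneity, so $\MAsigma$ produces enough generics to assemble a continuous $F\colon \cPN \to \cPN$. Let $\cJ$ be the ideal generated by $\Fin$ together with the sets $A$ for which $F(A) \Delta \Psi(A) \in \cI$; this sits inside the collection of sets on which $F$ lifts $\Phi$, and the ccc$/\Fin$ property of $\cJ$ follows because any uncountable almost-disjoint family in the complement of $\cJ$ would, by another $\Delta$-system argument and the $K_0$-structure, contradict the $\sigma$-linked choices just made. The main obstacle throughout is the counting in the third paragraph: making the $\OCAT$ alternative yield a genuine contradiction with the hypothesis that $\cI$ is countably $80$-determined, while simultaneously keeping the coloring open and the resulting posets $\sigma$-linked --- any of these three requirements, tightened too far, will break the others, and the whole point of Biba's trick is that all three can be made compatible at once.
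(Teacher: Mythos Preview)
Your proposal has the right high-level shape (local data, open colouring, amalgamation) but the actual mechanism is off in several places, and at least two of these are genuine gaps rather than matters of taste.

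First, you have the OCA alternatives inverted. In every lifting argument of this type the \emph{conflict} pairs must form the open colour $K_0$; the desirable outcome is the $\sigma$-$K_1$ decomposition, and what one must rule out is an uncountable $K_0$-homogeneous set. You put the conflicts into $K_1$, then speak of ruling out ``the $1$-homogeneous case'' and being left with $K_0$-homogeneous pieces---this is not one of the alternatives $\OCAT$ offers. More importantly, the paper does not run a single OCA argument on a space of tentative liftings over a fixed interval partition. It runs two separate colouring arguments: one (Proposition~\ref{P.general.Jcont.nonmeager}, via $\OCAinfty$ on the space $\cX_\cA$ attached to a perfect tree-like almost disjoint family) shows that $\Jcont^{\cK}$ meets every such family; a second (Proposition~\ref{P.uniformization}, via $\OCAsharp$ on $\Jcont^{\cK}$ itself, with each $A$ paired with its continuous $\cK$-approximation $F^A$) uniformizes these local liftings. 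Your single space $X$ of block-by-block guesses does not carry the structure needed for either step.

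Second, Biba's trick is not a ``budget argument'' that tunes the constant $80$ against a $\Delta$-system count. In the paper it is the following: once $\OCAsharp$ yields a nonmeager piece $\cX_n$ with $[\cX_n]^2\cap\bigcup\cV_n=\emptyset$, one \emph{attempts} to recursively choose $n_i,s_i,u_i,v_i$ keeping two nonmeager subfamilies alive while forcing $u_i\Delta v_i\notin\cK^{20}$; the $\cV_n$-freeness guarantees this recursion halts, and the halting data determine a C-measurable approximation. The constant $80$ arises by composing the stabilization estimates ($\cK^{10}$ from Lemma~\ref{L.Xstabilizer}, squared to $\cK^{20}$ in the conflict relation, then the factor $4$ from Lemma~\ref{L.approximation.nonmeager}), not from counting how many pairs a $\Delta$-system produces.

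Finally, $\MAsigma$ is not used to force through $\sigma$-linked posets of end-extensions to build $F$. Its sole role is Lemma~\ref{L.MAsl.thin}: given the uncountable $Z\subseteq 2^\omega$ and the intervals $I(s)$ coming from the $\OCAsharp$ alternative, $\MAsigma$ thins $Z$ so that the $I(s)$ sit inside a common block structure, after which Roberts's lemma (Lemma~\ref{L.Roberts}) manufactures the perfect tree-like family that meets $\Jcont^{\cK}$ and delivers the contradiction. Your proposed use of $\MAsigma$ to amalgamate local liftings is not needed and would not on its own give ccc$/\Fin$.
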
 

This is an immediate consequence of Proposition~\ref{P.general.Jcont.nonmeager} and Proposition~\ref{P.uniformization}  (for proof see \S\ref{S.Proofs}). 
The analog of Theorem~\ref{T.OCA-l} for ideals that are countably 3204-determined was proved in \cite[Theorem~10.4]{Fa:Luzin} from the Proper Forcing Axiom (PFA). It is not known whether an ideal is 3204-determined if and only if it is 80-determined (or even whether an ideal is strongly countably determined if and only if it is $F_{\sigma\delta}$, see \cite{he2022borel}), but regardless of the status of this problem, since PFA does not follow from $\OCAT$ and $\MAsigma$, Theorem~\ref{T.OCA-l} is a strengthening of  \cite[Theorem~10.4]{Fa:Luzin}. 

\subsection{Nonmeager hereditary sets}
\label{chJNT} 
\begin{definition}
	If  $\X$ is a  family of subsets of $\N$ 
	by $\SI{\hat \X}$ we denote the {\it \II{downwards closure}\/} (or, the {\em
		hereditary closure\/}) of~$\X$, i.e.,
	$\hat \X=\bigcup_{a\in \X}\p(a)$, and we 
	say that $\X$ is \emph{hereditary} if 
	$\X=\hat \X$. 
\end{definition}

The following is \cite[Theorem~3.10.1]{Fa:AQ}. It  is a slight strengthening of results of Jalali--Naini (\cite{JN}) and Talagrand~(\cite{Ta:Compacts}), where analogous result was proved for nonmeager ideals.

\begin{theorem} \label{T.HNM}
	\label{chOCAc.1}
	\begin{enumerate}
		\item 
		If $\X$ is a hereditary subset of $\p(\N)$, 
		then it 
		is nonmeager
		if and only if for every sequence $s_i$ of disjoint finite sets of
		integers there is an infinite $a\subseteq \N$ such that 
		$\bigcup_{i\in a}s_i\in \X$.
		\item   The family of nonmeager hereditary subsets of $\p(\N)$ is closed
		under taking finite intersections of its elements. 
		\item  \label{3.HNM}
		The family of nonmeager hereditary subsets 
		of $\p(\N)$ which are closed under finite changes of their  elements
		is closed under taking countable intersections. \qed 
	\end{enumerate}
\end{theorem}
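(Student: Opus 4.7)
The plan is to prove the three parts in order, with part (1) carrying all the work and (2)--(3) falling out by short diagonalizations that invoke (1).

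For the easy direction of (1), I will prove the contrapositive of ``nonmeager $\Rightarrow$ combinatorial condition''. Suppose some disjoint finite sequence $(s_i)$ is bad, that is, no infinite $a\subseteq\N$ has $\bigcup_{i\in a}s_i\in\X$. For each $x\in\X$, heredity forces $b_x=\{i:s_i\subseteq x\}$ to be finite (else $\bigcup_{i\in b_x}s_i\subseteq x$ would be in $\X$, giving an infinite witness). Thus $\X\subseteq\bigcup_n T_n$ where $T_n=\{x:b_x\subseteq n\}$; each $T_n$ is closed (its complement is the open set $\bigcup_{i\ge n}\{x:s_i\subseteq x\}$) and nowhere dense, as any basic clopen $[p]$ with $\dom(p)=F$ admits a refinement $[p\cup\chi_{s_i}]$ (for some $i\ge n$ with $s_i\cap F=\emptyset$) disjoint from $T_n$.

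For the harder direction of (1), assume $\X$ is meager and fix an increasing sequence of closed nowhere dense $F_n$ with $\X\subseteq\bigcup_n F_n$. I plan to build disjoint finite $s_i\subseteq[m_i,m_{i+1})$ via the classical Jalali--Naini--Talagrand fusion: at stage $i$, enumerate the $2^i$ patterns ``$\bigcup_{j\in b}s_j$ on $[0,m_i)$'' for $b\subseteq i$, and iteratively use the nowhere density of $F_i$ to refine each into a smaller basic clopen disjoint from $F_i$, all realized within a common finite interval $[m_i,m_{i+1})$. The technical crux is doing this simultaneously for all $2^i$ patterns so that a single $s_i$ serves them all; this is the heart of the classical argument, and the hereditary setting requires no additional ideas. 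Upon completion one verifies by construction that for every infinite $a$ and every $n$, $\bigcup_{i\in a}s_i$ lies in the basic clopen selected at stage $n$ for $b=a\cap (n+1)$, hence escapes $F_n$.

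Part (2) follows by iterating (1). Given nonmeager hereditary $\X_1,\X_2$ and disjoint finite $(s_i)$, the intersection is plainly hereditary; apply (1) to $\X_1$ to get an infinite $a_1$ with $\bigcup_{i\in a_1}s_i\in\X_1$, then apply (1) to $\X_2$ with the reindexed subsequence $(s_i)_{i\in a_1}$ to obtain an infinite $a_2\subseteq a_1$ with $\bigcup_{i\in a_2}s_i\in\X_2$; heredity of $\X_1$ and $\bigcup_{i\in a_2}s_i\subseteq\bigcup_{i\in a_1}s_i$ place the former in $\X_1$ as well. For part (3), given nonmeager hereditary $\X_n$ closed under finite changes and disjoint finite $(s_i)$, iterate (2) to obtain a nested infinite $a_0\supseteq a_1\supseteq\cdots$ with $\bigcup_{i\in a_n}s_i\in\bigcap_{k\le n}\X_k$, and diagonalize: pick $\alpha_n\in a_n$ strictly increasing and set $a=\{\alpha_n:n\in\N\}$. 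Then $a\setminus a_n$ is finite for each $n$, so $\bigcup_{i\in a\cap a_n}s_i\subseteq\bigcup_{i\in a_n}s_i$ is in $\X_n$ by heredity, and $\bigcup_{i\in a}s_i$ differs from it by a finite set, hence lies in $\X_n$ by closure under finite changes. The main obstacle throughout is the simultaneous refinement in the hard direction of (1); everything else is routine bookkeeping.
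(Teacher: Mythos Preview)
The paper does not prove this theorem; it is stated with a terminal \qed and attributed to \cite[Theorem~3.10.1]{Fa:AQ}, with the remark that it slightly strengthens the classical results of Jalali--Naini and Talagrand. Your argument is exactly the standard Jalali--Naini/Talagrand fusion for the hard direction of (1), together with the obvious diagonalizations for (2) and (3); this is the expected proof and is correct in outline.

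One small slip in your verification at the end of the hard direction: you write ``for every infinite $a$ and every $n$, $\bigcup_{i\in a}s_i$ lies in the basic clopen selected at stage $n$ for $b=a\cap(n+1)$''. This is not quite right as stated: at stage $n$ you only treated patterns $b\subseteq n$, and the argument requires $n\in a$ so that the restriction of $\bigcup_{i\in a}s_i$ to $[0,m_{n+1})$ equals $u_{a\cap n}\cup s_n$. The fix is to say: given $m$, choose any $n\in a$ with $n\ge m$; then $\bigcup_{i\in a}s_i$ lies in the clopen $[u_{a\cap n}\cup s_n;\,m_{n+1}]$, which was built disjoint from $F_n\supseteq F_m$. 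With this adjustment your sketch is complete.
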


The following refinement of Theorem~\ref{T.HNM} \eqref{3.HNM} will be useful. 

\begin{lemma}\label{L.HNM.1} If $\calH$  is a hereditary nonmeager set, then there exists $k$ such that for every $s\Subset [k,\infty)$ the set 
	\[
	\calH[s]=\{a\subseteq \bbN\mid s\cup a\in \calH\}
	\]
	is hereditary and nonmeager. 
	In particular, the intersection of $\calH$ with every nonempty open subset of $\cP([k,\infty))$ is relatively nonmeager, and the set $\calH\ucup \cP(k)$ is hereditary, nonmeager, and closed under finite changes of its elements. 
\end{lemma}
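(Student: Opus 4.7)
The plan is to prove the main nonmeagerness claim by contradiction and then read off the ``in particular'' consequences from it. Heredity of $\calH[s]$ is immediate from heredity of $\calH$: if $a\subseteq a'$ and $s\cup a'\in\calH$, then $s\cup a\subseteq s\cup a'\in\calH$, so $a\in\calH[s]$.

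For the main claim, I would suppose for contradiction that for every $k$ there exists $s_k\Subset[k,\infty)$ with $\calH[s_k]$ meager, and derive that $\calH$ itself is meager. The key reformulation is that, via the homeomorphism $\cP(\bbN\setminus s)\ni a\mapsto s\cup a$ onto the clopen set $\{b:s\subseteq b\}$, combined with the product decomposition $\cP(\bbN)=\cP(s)\times\cP(\bbN\setminus s)$ and heredity of $\calH[s]$, meagerness of $\calH[s]$ in $\cP(\bbN)$ is equivalent to meagerness of $\calH\cap\{b:s\subseteq b\}$ in the clopen set $\{b:s\subseteq b\}$, which, since this set is clopen, coincides with meagerness in $\cP(\bbN)$. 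Applying this to each $s_k$, the countable union $\bigcup_k\bigl(\calH\cap\{b:s_k\subseteq b\}\bigr)$ is meager. The complementary part of $\calH$ sits in the closed set $\{b:\forall k,\ s_k\not\subseteq b\}$; because $s_k\Subset[k,\infty)$ for every $k$, any basic clopen $\{b:b\cap F=u\}$ meets $\{b:s_k\subseteq b\}$ once $k>\max F$ (take $b=u\cup s_k$, which satisfies $b\cap F=u$ since $s_k\cap F=\emptyset$). Thus $\bigcup_k\{b:s_k\subseteq b\}$ is dense open and its closed complement is nowhere dense, hence meager. Splitting $\calH$ along this dense open presents it as a union of two meager sets, contradicting the nonmeagerness hypothesis.

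For a $k$ supplied by the main claim, the ``in particular'' conclusions follow quickly. A nonempty basic open $U=\{a\subseteq[k,\infty):a\cap F=u\}$ (with $F\subseteq[k,\infty)$ finite and $u\subseteq F$) corresponds under $a\mapsto a\setminus u$ to $\calH[u]\cap\cP([k,\infty)\setminus F)$; the main claim applied to $u\Subset[k,\infty)$, together with the same product-decomposition trick, makes this nonmeager in $\cP([k,\infty)\setminus F)$, so $\calH\cap U$ is relatively nonmeager in $U$. The set $\calH\ucup\cP(k)=\{c:c\cap[k,\infty)\in\calH\}$ is hereditary (the Minkowski union of hereditary families is hereditary) and nonmeager (it contains $\calH$).

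The most delicate point is closure of $\calH\ucup\cP(k)$ under finite changes. A change in $[0,k)$ is absorbed by $\cP(k)$, so the real question is whether, for $k$ chosen sufficiently large, $b\in\calH\cap\cP([k,\infty))$ and $h\Subset[k,\infty)$ force $b\cup h\in\calH$. Any failure witness $b$ must be infinite, since otherwise $\{a:b\cup h\subseteq a\}$ would be a nonempty basic clopen inside the meager complement $\cM=\calH^c$, impossible in a Baire space. I would close this gap by a Jalali--Naini-style parallel to the main claim: assuming failure at arbitrarily large $k$, recursively extract a disjoint sequence of failure witnesses and drive a contradiction with nonmeagerness of $\calH$, possibly enlarging the $k$ supplied by the main claim. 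This is the principal obstacle, as the combinatorial recursion has to handle (necessarily) infinite $b$-witnesses rather than finite objects, making it subtly more intricate than the main claim itself.
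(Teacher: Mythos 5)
Your proof of the main claim is correct and takes a genuinely different route from the paper. The paper deploys Theorem~\ref{T.HNM} three times: to recursively extract $s(j)\subseteq[k(j),k(j+1))$ with each $\calH[s(j)]$ meager, then to produce a Talagrand-style witness sequence $t(j)$ for the meagerness of $\bigcup_j\calH[s(j)]$, and finally to find $a\in\calH$ meeting all the $u(j)$, yielding a contradiction. You replace all of this combinatorics with a direct Baire-category observation: because $s_k\Subset[k,\infty)$, the union $\bigcup_k\{b:s_k\subseteq b\}$ is dense open, so $\calH$ decomposes as a countable union of the meager sets $\calH\cap\{b:s_k\subseteq b\}$ together with a nowhere dense remainder. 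Your reduction from $\calH[s]$ to $\calH\cap\{b:s\subseteq b\}$ via the product decomposition $\cP(\bbN)\cong\cP(s)\times\cP(\bbN\setminus s)$ is right, with one small over-statement: this step uses only the cylindricality $\calH[s]=\cP(s)\times\{c\subseteq\bbN\setminus s:s\cup c\in\calH\}$, which holds by definition of $\calH[s]$ (not by heredity of $\calH[s]$). Your handling of the first ``in particular'' clause also matches the paper's.

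Two remarks on the last clause. First, a local flaw: your argument that a finite failure witness $b$ would place a clopen set inside ``the meager complement $\calH^c$'' is unsound, since $\calH$ nonmeager does not make $\calH^c$ meager (e.g.\ $\calH=\{A:0\notin A\}$). The correct reduction is shorter: for finite $b,h$ with $b\cup h\Subset[k,\infty)$, the main claim gives $\calH[b\cup h]$ nonmeager, hence nonempty, hence (being hereditary) containing $\emptyset$, i.e.\ $b\cup h\in\calH$, contradicting the choice of witness.

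Second, and more importantly, the gap you flag cannot be closed because the final clause of the lemma as stated is false, and you were right to sense that it requires a fundamentally different argument than the main claim supplies. Take $\cJ=\{A:\exists^\infty n\ A\cap[2^n,2^{n+1})=\emptyset\}$, which is hereditary, comeager, and closed under finite changes. Let $A_0=\{2^n:n\in\bbN\}$, so $A_0\notin\cJ$, and set $\calH=\cP(A_0)\cup\cJ$. Then $\calH$ is hereditary and nonmeager, and for every finite $s$ one has $\calH[s]\supseteq\cJ[s]=\cJ$, so the main claim holds with any $k$. Yet for every $k$, choosing $m\geq k$ with $m\notin A_0$: the set $A_0\setminus k$ lies in $\calH\ucup\cP(k)$, while $(A_0\setminus k)\cup\{m\}$ does not, since it is neither a subset of $A_0$ nor (being a finite perturbation of $A_0$) in $\cJ$. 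So $\calH\ucup\cP(k)$ is not closed under finite changes for any $k$. Note that the paper's own proof only argues the first two clauses (relative nonmeagerness on $\cP([k,\infty))$) and is silent on closure under finite changes, and where Lemma~\ref{L.HNM.1} is invoked downstream only the main claim and density of $\calH\cap\cP([k,\infty))$ are used. Your instinct that this is the principal obstacle was exactly right.
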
	

\begin{proof} The set $\calH\ucup \cP(k)$  is clearly hereditary for every $k$. Assume it is meager for all $k$. By Theorem~\ref{T.HNM}, we can recursively find an increasing sequence $k(j)$ and $s(j)\subseteq [k(j), k(j+1))$ such that $\calH[s(j)]$ is meager for all $j$.
	
	Thus $\bigcup_j \calH[s(j)]$ is a meager hereditary set, and by Theorem~\ref{T.HNM} there is a sequence of disjoint finite subsets of $\bbN$, $t(j)$, for $j\in \bbN$, such that the set
	\[
	\{a\subseteq \bbN\mid (\exists^\infty j) t(j)\subseteq a\}
	\]
	 is disjoint from $\bigcup_j \calH[s(j)]$. Recursively find increasing sequences $m(j), n(j)$ such that $u(j)=s(m(j))\cup t(n(j))$ are pairwise disjoint. 
	
	By Theorem~\ref{T.HNM} there is an infinite $X\subseteq \bbN$ such that $a=\bigcup_{j\in X} u(j)\in \calH$. Fix $j\in X$. Then $a\in \calH[s(j)]$ and also $a\notin\bigcup_i \calH[s(i)]$; contradiction. 
	
	For the second claim, it suffices to prove that the intersection of $\calH$ with every basic open subset of $\cP([k,\infty))$ is relatively nonmeager. Since $\calH$ is hereditary, this is equivalent to $\calH[s]$ being nonmeager for every $s\Subset [k,\infty)$. 
\end{proof}

The following will be useful in analysis of liftings that are Baire-measurable on large hereditary sets in \S\ref{S.Coherence}. 

\begin{coro}\label{C.HereditaryComeager}
	If $\cX\subseteq \cP(\bbN)$ is comeager and $\calH$ is a nonmeager hereditary subset of $\cP(\bbN)$, then there are a partition $\bbN=A_0\sqcup A_1$ and sets $C_0\subseteq A_0$ and $C_1\subseteq A_1$ such that for every $X\in \calH$ both $(X\cap A_0)\cup C_1$ and $(X\cap A_1)\cup C_0$ belong to $\calH^2\cap \cX$. 
\end{coro}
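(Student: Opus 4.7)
The plan is to build $A_0, A_1, C_0, C_1$ by an inductive construction on consecutive intervals, using the open-density of $\cX$ to secure the ``$\in\cX$'' part of the conclusion, and invoking Theorem~\ref{T.HNM}(1) at the very end to arrange $C_0, C_1 \in \calH$. Since $\calH$ is hereditary, $X \cap A_i \in \calH$ for every $X \in \calH$, so $C_{1-i} \in \calH$ automatically delivers the ``$\in\calH^2$'' part.

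First I will write $\cX = \bigcap_n \cU_n$ with $\cU_n$ open dense and decreasing, fix $k$ from Lemma~\ref{L.HNM.1} (so every finite subset of $[k,\infty)$ lies in $\calH$), and set $n_0 := k$. Then I inductively construct $n_0 < n_1 < \dots$ and finite $d_m \subseteq J_m := [n_m, n_{m+1})$, and take
\[
A_0 := \bigsqcup_{m \text{ even}} J_m, \qquad A_1 := [0, k) \sqcup \bigsqcup_{m \text{ odd}} J_m.
\]
At stage $m$ I use the open-density of each $\cU_0, \ldots, \cU_m$, together with a finite fusion over the pairs $(j, u)$ with $j \leq m$ and $u \subseteq [0, n_m)$, to extend $d_m$ and $n_{m+1}$ so that uniformly in $j$ and $u$,
\[
\{Z \in \cP(\bbN) : Z \cap [0, n_{m+1}) = u \cup d_m\} \subseteq \cU_j;
\]
each later refinement preserves previously secured inclusions, since a more restrictive basic open set is still contained in the same $\cU_{j'}$. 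The families $(d_{2m})_m$ and $(d_{2m+1})_m$ are pairwise disjoint sequences of finite sets, so Theorem~\ref{T.HNM}(1) yields infinite $b_0, b_1 \subseteq \bbN$ with
\[
C_0 := \bigcup_{m \in b_0} d_{2m} \in \calH, \qquad C_1 := \bigcup_{m \in b_1} d_{2m+1} \in \calH,
\]
and by construction $C_0 \subseteq A_0$, $C_1 \subseteq A_1$.

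To verify, fix $X \in \calH$ and $j \in \bbN$, and pick any $m \in b_0$ with $2m \geq j$. Since $J_{2m} \subseteq A_0$ and $m \in b_0$, the restriction of $(X \cap A_1) \cup C_0$ to $J_{2m}$ is exactly $d_{2m}$, so its restriction to $[0, n_{2m+1})$ has the form $u \cup d_{2m}$ for some $u \subseteq [0, n_{2m})$. The stage-$2m$ inclusion then places $(X \cap A_1) \cup C_0$ in $\cU_j$, and varying $j$ gives $(X \cap A_1) \cup C_0 \in \cX$. The argument for $(X \cap A_0) \cup C_1$ is symmetric, with $b_1$ in place of $b_0$.

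I expect the main obstacle to be the tension between the two pressures on the $d_m$: the $\cU_j$-forcing leaves little freedom in choosing them, while the requirement $C_i \in \calH$ is a statement about the whole union. What makes the plan work is that the stage-$m$ inclusion is formulated uniformly in $u$, so it is preserved under passage to any infinite subsequence of the $d_{2m}$'s (and likewise of the $d_{2m+1}$'s); this is precisely what allows Theorem~\ref{T.HNM}(1) to be applied at the end to thin the construction into $\calH$ without spoiling the $\cX$-forcing.
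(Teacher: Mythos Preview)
Your proof is correct, and it shares the same skeleton as the paper's—interval conditions to encode comeagerness, then Theorem~\ref{T.HNM}(1) to place the forced pieces in $\calH$—but the paper's execution is lighter. Rather than running a fusion that is uniform over all $u\subseteq[0,n_m)$, the paper cites the standard fact that a comeager $\cX$ contains $\{X:(\exists^\infty n)\,X\cap I(n)=s(n)\}$ for some disjoint finite $I(n)$ and $s(n)\subseteq I(n)$; it then applies Theorem~\ref{T.HNM}(1) \emph{once}, to the sequence $(I(n))$ rather than to the $s(n)$'s, obtaining an infinite $d$ with $\bigcup_{n\in d}I(n)\in\calH$, splits $d=d_0\sqcup d_1$, and sets $A_0=\bigcup_{n\in d_0}I(n)$, $A_1=\bbN\setminus A_0$, $C_j=\bigcup_{n\in d_j}s(n)$. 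Because each $I(n)$ with $n\in d_0$ lies entirely inside $A_0$, the condition $Y\cap I(n)=s(n)$ on $Y=(X\cap A_1)\cup C_0$ is decided by $C_0$ alone and is independent of $X$—so the uniformity you build by hand via the fusion over $u$ comes for free. Your appeal to Lemma~\ref{L.HNM.1} and the choice $n_0=k$ are never actually used in your verification and can be dropped.
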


\begin{proof} Since $\cX$ is comeager, there are 
	$s(n)\subseteq I(n)\Subset \bbN$  for $n\in \bbN$ such that  $I(n)$ are disjoint and $\cX\supseteq \{X\mid (\exists ^\infty n) X\cap I(n)=s(n)\}$. By Theorem~\ref{T.HNM} there is an infinite $d\subseteq \bbN$ such that $\bigcup_{n\in d} I(n)\in \calH$. Partition $d$ into two infinite sets, $d=d_0\cup d_1$.  Let
	\begin{align*}\textstyle
		A_0=\bigcup_{n\in d_0} I(n),\quad
		A_1=\bbN\setminus A_0,\quad 
		C_0=\bigcup_{n\in d_0} s(n),\quad
		C_1=\bigcup_{n\in d_1} s(n). 
	\end{align*}
	If $X\in \calH$ then, both $Y_0=(X\cap A_0)\cup C_1$ and $Y_1=(X\cap A_1)\cup C_0$ belong to $\calH^2$. Also, $Y_j\in \bigcap_{n\in d_j} [I(n), s(n)]$ for $j=0,1$, hence both $Y_0$ and $Y_1$ belong to $\cX$, as required. 
\end{proof}


\subsection{Tree-like almost disjoint families}
Infinite sets $A$ and $B$ of integers are {\it almost
	disjoint\/} if their intersection is finite, and $A$ {\it almost
	includes\/}  $B$, 
if $B\setminus A$ is finite. 
For $s\in \twolo$ let $|s|$ denote its length (equivalently, its domain).
For $s$ and $t$ in $\twolo$ we write $s\sqsubseteq t$ if $s$ is an initial segment of $t$. If $f$ and $g$ are in $\twoo$ and distinct then $f\wedge g$ is the maximal common initial segment $s\in\twolo$ of $f$ and $g$.

Under the name of neat families, tree-like almost disjoint families were first used in \cite[II.4.1]{Sh:Proper} and  \cite{Ve:OCA}. 

\begin{definition} \label{Def.tree-like}
	A family $\A$  of almost disjoint sets of 
	integers is  \emph{tree-like} if 
	there is an ordering $\prec$ on its domain $\D=\bigcup\A$ such that 
	$\<\D,\prec\>$ is a  tree of height~$\omega$ and each element of $\A$ is
	included in a unique  maximal branch of this tree. 
\end{definition}

An example of a tree-like family that is also a perfect subset of $\cPN$ is given as follows. 

\begin{example}[Perfect tree-like almost disjoint family] \label{ex.perfect} 
	Suppose that $J(s)$, for $s\in \twolo$, are pairwise disjoint nonempty finite subsets of $\bbN$. For $f\in \twoo$ let 
	\[
	J(f)=\bigcup\{J_{f\rs n}\mid f\in\twoo\}. 
	\]
	Then $A(f)\cap A(g)=\bigcup_{s\sqsubseteq f\wedge g} J(s)$ for $f\neq g$. Therefore the family 
	\[
\cA\{J_s\}=\{J(f)\mid f\in \twoo\}
	\]
	is  tree-like, and even a perfect subset of $\cPN$. 

We say that $\cA\{J_s\}$ is a \emph{perfect tree-like almost disjoint family}. 
\end{example}

\begin{lemma} \label{L.perfect.coarsening} If $\cA$ is a perfect tree-like almost disjoint family, then there is a perfect tree-like almost disjoint family $\cB$ such that every element of $\cB$ includes $2^{\aleph_0}$ elements of $\cA$. 
\end{lemma}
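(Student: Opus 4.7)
The plan is to build $\cB$ as a perfect tree-like family $\cA\{K_t : t \in \twolo\}$ whose individual branches absorb entire perfect fibers of a continuous surjection from $\twoo$ onto itself. Write $\cA = \cA\{J_s : s \in \twolo\}$ and consider $\pi \colon \twoo \to \twoo$ defined by $\pi(f)(n) = f(2n)$. Each fiber $\pi^{-1}(g) = \{f \in \twoo : f(2n) = g(n)\ \forall n\}$ is homeomorphic to $\twoo$ via the odd bits of $f$, so has cardinality $2^{\aleph_0}$; moreover any two distinct fibers diverge at a bounded tree level (below $2k$, where $k$ is the first disagreement of the two elements of $\twoo$), which is exactly what is needed for almost disjointness downstream.

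For each $s \in \twolo$ let $t(s) \in \twolo$ be the subword at even positions, so $|t(s)| = \lceil |s|/2 \rceil$ and $t(s)(k) = s(2k)$. Define
\[
K_t := \bigcup\{J_s : s \in \twolo,\ t(s) = t\}.
\]
Only $s$ of length $2|t|-1$ or $2|t|$ satisfy $t(s) = t$, so $K_t$ is a finite union of the finite sets $J_s$, hence a nonempty finite subset of $\bbN$. Since $s \mapsto t(s)$ is single-valued and the $J_s$'s are pairwise disjoint, the family $\{K_t : t \in \twolo\}$ is itself pairwise disjoint. Thus $\cB := \cA\{K_t\}$ is a perfect tree-like almost disjoint family by Example~\ref{ex.perfect}.

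It remains to verify the containment. For any $g \in \twoo$ set $B(g) := \bigcup_{t \sqsubseteq g} K_t$. Given $f \in \pi^{-1}(g)$ and a prefix $s = f \rs m$, one has $t(s)(k) = s(2k) = f(2k) = g(k)$ for $k < |t(s)|$, so $t(s) \sqsubseteq g$; hence $J_s \subseteq K_{t(s)} \subseteq B(g)$, and taking the union over $m$ yields $A(f) \subseteq B(g)$. Since $\pi^{-1}(g)$ has size $2^{\aleph_0}$, the branch $B(g)$ contains that many elements of $\cA$.

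There is no real obstacle here; the only point that needs attention is to arrange the grouping so the $K_t$'s stay finite and pairwise disjoint, which the even-bit coarsening delivers automatically, and to notice that the resulting containment of branches is literal rather than merely mod-finite.
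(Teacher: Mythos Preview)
Your proof is correct and follows essentially the same approach as the paper: both coarsen $\cA\{J_s\}$ by grouping the $J_s$'s according to the bits of $s$ at a fixed parity of positions (you use the even positions via $\pi(f)(n)=f(2n)$, the paper uses the odd positions via the condition $t(2j+1)=s(j)$), so that each branch of $\cB$ absorbs a full Cantor set of branches of $\cA$. The only cosmetic difference is the parity choice and the bookkeeping of which lengths $|s|$ feed into a given $K_t$.
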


\begin{proof}
We have  $\cA=\cA\{J_s\}$ for pairwise disjoint finite sets $J_s$, for $s\in \twolo$. 
For $s\in \twolo$ with length $n$, let 
\[
I_s=\bigcup\{J_t\mid 2n \leq |t|<2(n+1), (\forall j< n) t(2j+1)=s(j)\}.
\]
This is a family of finite, pairwise disjoint, sets. 
Let $\cB=\cA\{I_s\}$. Then every $f$ satisfies
\[
I(f)=\bigcup_{n} I_{f\rs n}=\bigcup\{J(g)\mid g(2j+1)=f(j)\text{ for all }j\}
\]
and $\cB$ is as required. 
\end{proof}

\subsection{Martin's Axiom}\label{S.MA.ad}

We assume that the reader is familiar with Martin's Axiom $\MA$ and its weakening $\MAsigma$. See for example \cite{Ku:Set}.

For distinct $a$ and $b$ in $\twoo$ let\index[symbol]{$\Delta(a,b), a\wedge b$} 
\begin{align*}
	\Delta(a,b)&=\min\{n\mid a(n)\neq b(n)\},\\
	a\wedge b&=a\rs \Delta(a,b). 
\end{align*}
If $Z\subseteq \twoo$ then let
\(
\Delta(Z)=\{a\wedge b\mid a,b\text{ are distinct elements of $Z$}\}. 
\)
Equivalently,  
$\Delta(Z)=\{s\in \twolo\mid [s^\frown j]\cap Z\neq \emptyset\text{ for }j=0,1\}$. 
The set $\Delta(Z)$ is a tree with respect to $\sqsubseteq$ and its $i$-th level is $\{s\in \Delta(Z)\mid |\{t\in \Delta(Z)\mid t\sqsubset s\}|=i+1\}\}$. 

\begin{lemma} \label{L.MAsl.thin}
	Assume $\MAsigma$. Also assume $Z\subseteq \twoo$ is uncountable and  $I(s)$ is a finite interval in $\bbN$  such that $|s|\leq \min(I(s))$ for every $s\in \Delta(Z)$.  Then there are an uncountable $Z'\subseteq Z$, an increasing sequence $k_i$, for $i\in \bbN$, in~$\bbN$, and $i\colon \Delta(Z')\to \bbN$  such that every $s\in \Delta(Z')$ satisfies $I(s)\subseteq [k_{i(s)},k_{i(s)+1}))$. Moreover, for every $i\in \bbN$, the set  $\cS_i=\{s\in \Delta(Z')\mid i(s)=i\}$ is the $i$-th level of $\Delta(Z')$. 
\end{lemma}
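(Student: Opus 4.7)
The plan is to construct $Z'$, $(k_i)$ and $i(\cdot)$ by applying $\MAsigma$ to a $\sigma$-linked forcing poset $\bbP$ of finite approximations. First I would reduce to the case $|Z|=\aleph_1$ by passing to a subset, and enumerate $Z=\{a_\alpha:\alpha<\omega_1\}$.

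A condition $p\in\bbP$ will consist of a nonempty finite set $F_p\Subset Z$ together with an increasing finite sequence $\vec{k}^p=(k^p_0<k^p_1<\cdots<k^p_{n_p})$ in $\bbN$, subject to: $\Delta(F_p)\subseteq 2^{<k^p_{n_p}}$, and for every $s\in\Delta(F_p)$ at depth $d$ in the finite tree $\Delta(F_p)$ one has $I(s)\subseteq[k^p_d,k^p_{d+1})$. The order is $p\leq q$ iff $F_p\supseteq F_q$, $\vec{k}^p$ end-extends $\vec{k}^q$, and no element of $\Delta(F_p)\setminus\Delta(F_q)$ is a strict prefix of any element of $\Delta(F_q)$; the latter guarantees that the depth of each existing branching node is preserved as the condition is refined. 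Given a sufficiently generic filter $G$, I would set $Z'=\bigcup_{p\in G}F_p$, $(k_i)=\bigcup_{p\in G}\vec{k}^p$, and $i(s)=$ depth of $s$ in the generic tree $\Delta(Z')$.

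To apply $\MAsigma$ I would verify that $\bbP$ is $\sigma$-linked by partitioning its conditions into countably many pieces indexed by the type $(\vec{k}^p,T_p)$, where $T_p=\{s:s\sqsubseteq a\rs k^p_{n_p}\text{ for some }a\in F_p\}$ is the finite prefix tree. Given two conditions $p,q$ of the same type, a common lower bound $r$ is constructed by setting $F_r=F_p\cup F_q$ and $\vec{k}^r=(\vec{k}^p,k^r_{n_p+1})$ with $k^r_{n_p+1}$ chosen larger than $\max I(s')$ for every new branching node $s'=a\wedge b$ produced by the merge. The decisive observation is that each such $s'$ has $|s'|\geq k^p_{n_p}$, so the hypothesis $|s'|\leq\min I(s')$ places $I(s')$ inside the newly opened interval $[k^p_{n_p},k^r_{n_p+1})$, and in $\Delta(F_r)$ the node $s'$ sits exactly at depth $n_p$. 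Density of $D_\alpha=\{p:a_\alpha\in F_p\}$ is proved by the same top-level extension: given $q$, adjoin $a_\alpha$ to $F_q$ and extend $\vec{k}^q$ far enough to accommodate the finitely many new meets $a_\alpha\wedge a$ with $a\in F_q$. Then $\MAsigma$ produces a filter meeting every $D_\alpha$, so the resulting $Z'$ is uncountable.

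The main obstacle will be the $\sigma$-linked verification: if the type tree $T=T_p=T_q$ has a leaf $\ell$ at tree-depth $d<n_p$, then a merge disagreeing at $\ell$ produces a new branching $s'$ at depth $d$ whose length already exceeds $k^p_{d+1}$, and its $I$-interval cannot fit in $[k^p_d,k^p_{d+1})$. I expect to handle this either by restricting $\bbP$ to conditions whose $T_p$ is balanced, i.e., every leaf of $T_p$ has tree-depth exactly $n_p$ (equivalently $|F_p|=2^{n_p}$), or by refining the type to also record which element of $Z$ is attached to each low-depth leaf, so that incompatibilities at such leaves are absorbed into different pieces of the partition. With this refinement, the hypothesis $|s|\leq\min I(s)$ is exactly what makes same-type merges compatible with the depth-to-interval alignment, and the density of each $D_\alpha$ can still be arranged within the restricted poset.
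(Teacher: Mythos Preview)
Your poset and the linkedness strategy are essentially the paper's. The genuine gap is the density of $D_\alpha$. Your claim that adjoining $a_\alpha$ to $F_q$ is a ``top-level extension'' is false: nothing prevents $a_\alpha\wedge a$ (for $a\in F_q$) from being \emph{shorter} than some element of $\Delta(F_q)$, in which case the new branching node is a strict prefix of an old one. Your own ordering clause then forbids the extension, and even without that clause the depths of the old nodes shift up by one, destroying the level--interval alignment you have already fixed. Concretely, if $F_q=\{b,c\}$ with $b\wedge c$ long and $a_\alpha$ agrees with $b$ only on a very short initial segment, then $a_\alpha\wedge b\sqsubset b\wedge c$, so $b\wedge c$ moves from depth $0$ to depth $1$---yet $I(b\wedge c)$ is already pinned inside $[k^q_0,k^q_1)$. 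So $D_\alpha$ is simply not dense.

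The paper sidesteps this entirely: it never tries to force any particular $z$ into $Z'$. Instead it notes that the singleton conditions $p_z=(\{z\},(0))$ form an uncountable family, and since the poset is ccc some condition must force $\{z\in Z: p_z\in\dot G\}$ to be uncountable (otherwise, by ccc, a single countable ground-model set $C$ would be forced to contain it, and any $p_z$ with $z\notin C$ would be incompatible with every condition---absurd, since $p_z$ is a condition). Below that condition one then only needs to meet the countably many dense sets $\{p:l(p)\ge l\}$, and these \emph{are} dense because one can always double $F_q$ by choosing, for each $x\in F_q$, some $x'\in Z\setminus\{x\}$ with $x'\restriction k^q_{n_q}=x\restriction k^q_{n_q}$; here the reduction to $Z$ with no isolated points is used.

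On the $\sigma$-linked obstacle you flagged: you are right that unbalanced trees break same-type compatibility, and your first fix---restricting $\bbP$ to conditions with $|F_p|=2^{n_p}$---is the clean one. The singletons $p_z$ are balanced, the doubling extension above preserves balance, and merges of two balanced same-type conditions are again balanced, so nothing else in the argument is disturbed. Your second fix, recording the actual element of $Z$ attached to each low-depth leaf, produces uncountably many types and loses $\sigma$-linkedness.
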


\begin{proof} We may assume that $Z$ has no isolated points, by removing all relatively open and countable subsets of $Z$. Let $\bbP$ be the poset of all $p=(F(p),\vec k(p))$ such that $F(p)\Subset Z$ and $\vec k(p)$ is a tuple $k_0(p)<k_1(p)<\dots< k_{l(p)}(p)$ for some $l(p)\in \bbN$ such that the following holds.
	\begin{enumerate}
		\item For all $s\in \Delta(F(p))$, $I(s)\subseteq [k_i(p), k_{i+1}(p))$ for some $i<l(p)$. 
		\item For all $i<l(p)$, $|\{s\in \Delta(F(p))\mid I(s)\subseteq [k_i(p), k_{i+1}(p))\}$ is the $i$-th level of $\Delta(F(p))$. 
	\end{enumerate}
	In order to prove that $\bbP$ is $\sigma$-linked, it suffices to prove that every two conditions $p$ and $q$ such that $\vec k(p)=\vec k(q)$ and (with $l=l(p)=l(q)$ and  $k=k_l(p)=k_l(q)$) they satisfy $\{z\rs k\mid z\in F(p)\}=\{z\rs k \mid z\in F(q)\}$ are compatible. 
	
	Assume for a moment that $F(p)\cap F(q)=\emptyset$. Thus for every $x\in F(p)$ the unique $x'\in F(q)$ such that $x\rs k_l=x'\rs k_l$ is distinct from $x$. 
	Then let $F(r)=F(p)\cup F(q)$, $l(r)=l+1$, $k_i(r)=k_i(p)$ for $i\leq l$, and $k_{l+1}(r)=\max\{I(\Delta(x,x')) \mid x\in F(p)\}+1$. 
	Since $I(x\wedge x')\geq |x\wedge x'|\geq k$,  	the condition $r=(F^r,\vec k(r))$ belongs to $\bbP$ and extends both $p$ and $q$. 
	
	Now consider the case when the set $F_0=F(p)\cap F(q)$  is nonempty. Thus $F_0$ is the set of all $x\in F(p)$ such that the unique $x'\in F(q)$ which satisfies $x\rs k_l=x'\rs k_l$ is equal to $x$. Since $Z$ has no isolated points, for every $x\in F_0$ we can find $x'\in Z\setminus \{x\}$ such that $x\rs k=x'\rs k$. Let $F(r)=F(p)\cup F(q)\cup \{x'\mid x\in F_0\}$, $l(r)=l+1$, $k_i(r)=k_i(p)$ for $i\leq l$, and $k_{l+1}(r)=\max\{I(\Delta(x,x')) \mid x\in F(p)\}+1$. 
	As in the previous case, $r=(F(r),\vec k(r))$	 is a condition that extends both $p$ and $q$. 
	
		For every $z\in Z$ the condition $p_z$ satisfying $F(p_z)=\{z\}$, $l(p_z)=0$ and $k_0(p_z)=0$ belongs to $\bbP$. 
		Since $\bbP$ is $\sigma$-linked, some condition $p\in \bbP$ forces that $\{z\in Z\mid p_z\in G\}$ is uncountable. 
	Therefore $\MAsigma$ implies that for some filter $G\subseteq \bbP$ the set $Z'=\{z\in Z\mid p_z\in G\}$ is uncountable and that for every $l$ there is $p\in G$ satisfying $l(p)\geq l$.  Since $G$ is a filter, for every $F\Subset Z$ there is $p\in G$ such that $F(p)\supseteq F$. For each $i$ let $k_i=k_i(p)$ for some $p\in G$ such that $l(p)\geq l$. Since $G$ is a filter, $k_i$ is well-defined. Clearly $Z'$ and the sequence $k_i$ are as required. 
\end{proof}

\subsection{Open colouring axioms}\label{S.OCAsharp}

In this section we state two apparent strengthenings of $\OCAT$ each one of which is equivalent to it.  It is the introduction of the second one, $\OCAsharp$, in \cite{de2023trivial} that precipitated the progress reported here.  Readers unfamiliar with these axioms may want to skip this subsection on the first reading, and consult the definitions later on as needed. 

If $X$ is a topological space and $U$ is a subset of $X^2$ disjoint from the diagonal that is symmetric (i.e., $(x,y)\in U$ if and only if $(y,x)\in X$) then we $U$ corresponds to a unique subset of $[X]^2$, the space of unordered pairs of elements of $X$, and every subset of $[X]^2$ corresponds to symmetric $U\subseteq X^2$ disjoint from the diagonal.  We say that a subset of $[X]^2$ is \emph{open} if the corresponding $U$ is open in the product topology.

\begin{definition} 
	$\OCAT$ is the following statement. 
	If $X$ is a separable metric space  and $[X]^2=K_0\cup K_1$ is a partition such that $K_0$ is open, then $X$ either has  an uncountable $K_0$-homogeneous subset
	or it  can be covered by a countable family of $K_1$-homogeneous sets. 
\end{definition}

\begin{definition}\label{Def.OCAinfty} 
	$\OCAinfty$ is the following statement. 
	If $X$ is a separable metric space  and $[X]^2=K_0^n\cup K_1^n$, for $n\in \bbN$  are open
	partitions such that $K_0^n\supseteq K_0^{n+1}$ for all $n$ then one of the following applies
	\begin{enumerate}
		\item There are sets $X_n$, for $n\in \bbN$, such that $X=\bigcup_n X_n$ and $[X_n]^\subseteq K_1^n$. (We say that $X$ is $\sigma$-$K_1^*$-homogeneous.)
		\item There are an uncountable $Z\subseteq \twoo$ and a continuous $f\colon Z\to X$  such that $\{f(a),f(b)\}\in K_0^{\Delta(a,b)}$ for all $a,b$ in $Z$. 
	\end{enumerate}
\end{definition}

Although $\OCAsharp$ appears to be tailor-made to work with Biba’s trick (see the final segment of the proof of Theorem~\ref{T.OCAsharp-Fin}  and the final segment of the proof of Proposition~\ref{P.uniformization}), realizing this took considerably more time than the author would be prepared to admit. 

\begin{definition} \label{Def.OCAsharp} 
	$\OCAsharp$ is the following statement. Suppose that $X$ is a separable metric space and that $\cV_j$ is a countable family of symmetric open subsets of $[X]^2$ such that $\bigcup\cV_j\supseteq \bigcup\cV_{j+1}$ for all $j\in \bbN$. 
	Then one of the following alternatives holds. 
	\begin{enumerate}
		\item \label{OCAsharp.1} There are sets $X_n$, for $n\in \bbN$, such that $X=\bigcup_n X_n$ and 
		\[
		[X_n]^2\cap\bigcup\cV_n =\emptyset
		\] 
		for all $n$. 
		\item\label{OCAsharp.2} There are an uncountable $Z\subseteq \twoo$, an injective $f\colon Z\to X$, and $\rho\colon \Delta(Z)\to \bigcup_j \cV_j$  such that $\rho(s)\in \cV_{|s|}$ for all $s$ and all distinct $a$ and $b$ in $Z$ satisfy
		\[
		\{f(a),f(b)\}\in \rho(a\wedge b). 
		\]
	\end{enumerate}
\end{definition}

In \cite[\S 5]{moore2021some} it was proven that $\OCAT$ and $\OCAinfty$ are equivalent, and analogous proof was used in \cite[Theorem~3.3]{de2023trivial} to prove that $\OCAsharp$ is equivalent to $\OCAT$.

\section{Approximations to a homomorphism}

\subsection{Coherence of approximations}
\label{S.Coherence}
\begin{definition}
Suppose that $\cI$ is an ideal on $\bbN$, $\Phi\colon \cP(\bbN)\to \cP(\bbN)/\cI$ is a homomorphism, $\Phi_*$ is a lifting of $\Phi$,  and $\cK$ is an approximation to $\cI$ (i.e., $\cI\subseteq \cK\ucup \Fin$). A function $\Theta\colon \cPN\to \cPN$ is a \emph{$\cK$-approximation to $\Phi$} if for every $A\subseteq \bbN$ we have 
\[
\Phi_*(A)\Delta \Theta(A)\in \cK\ucup \Fin. 
\]
If this holds for all $A\in \cX$ for some $\cX\subseteq \cPN$, then we say that $\Theta$ is a \emph{$\cK$-approximation to $\Phi$ on $\cX$}. 
\end{definition}

The following variation on the standard stabilization trick is a second-order corollary (it is a corollary to Corollary~\ref{C.HereditaryComeager}). 

\begin{coro}\label{C.Stabilization}   Suppose that $\cI$ is an ideal on $\bbN$ with closed approximation $\cK$, $\Phi\colon \cP(\bbN)\to \cP(\bbN)/\cI$ is a homomorphism, and $\calH$ is a hereditary nonmeager subset of $\cPN$, 
	\begin{enumerate}
		\item 	If  $\Phi$ has a Baire-measurable $\cK$ approximation on $\calH^2$, then it has a continuous $\cK\ucup \cK$ approximation on $\calH$. 
		\item 	If  $\Phi$ has a $\cK$ approximation on $\calH^2$ whose graph can be covered by graphs of countably many Baire-measurable functions, then it has a $\cK\ucup \cK$ approximation whose graph can be covered by graphs of countably many continuous functions on $\calH$. 
	\end{enumerate}
\end{coro}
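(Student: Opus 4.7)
The plan is to execute the standard Shelah-style stabilization trick, with Corollary~\ref{C.HereditaryComeager} playing the role that the Jalali--Naini--Talagrand characterization of meagerness plays in the classical version. For part~(1), fix a Baire-measurable $\cK$-approximation $\Theta$ to $\Phi$ on $\calH^2$, and choose a comeager $\cX\subseteq\cPN$ on which $\Theta\rs\cX$ is continuous. Applying Corollary~\ref{C.HereditaryComeager} to $\cX$ and $\calH$ yields a partition $\bbN=A_0\sqcup A_1$ and sets $C_i\subseteq A_i$ ($i=0,1$) such that for every $X\in\calH$ the two partial copies
\[
Y_0(X):=(X\cap A_0)\cup C_1,\qquad Y_1(X):=(X\cap A_1)\cup C_0
\]
both lie in $\calH^2\cap\cX$.

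The Boolean identity $X=Y_0(X)\,\Delta\, Y_1(X)\,\Delta\,(C_0\cup C_1)$ transfers through the homomorphism $\Phi$ to the congruence $\Phi_*(X)\,\Delta\,\Phi_*(Y_0(X))\,\Delta\,\Phi_*(Y_1(X))\,\Delta\,\Phi_*(C_0\cup C_1)\in\cI$. Define
\[
\Theta'(X):=\Theta(Y_0(X))\,\Delta\,\Theta(Y_1(X))\,\Delta\,\Phi_*(C_0\cup C_1).
\]
Continuity of $\Theta'$ is immediate from continuity of the maps $X\mapsto Y_i(X)$ together with continuity of $\Theta\rs\cX$, because $Y_i(X)\in\cX$ for every $X\in\calH$. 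For $X\in\calH$ we have $Y_i(X)\in\calH^2$, whence $\Phi_*(Y_i(X))\,\Delta\,\Theta(Y_i(X))\in\cK\ucup\Fin$; combining this with the $\cI$-congruence expresses $\Phi_*(X)\,\Delta\,\Theta'(X)$ as a symmetric difference of three sets in $\cK\ucup\Fin$, and one then verifies it lies in $(\cK\ucup\cK)\ucup\Fin$ as required.

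Part~(2) is obtained by running the argument of~(1) over a countable family $\Theta_m$ of Baire-measurable functions whose graphs cover the graph of $\Theta$ on $\calH^2$. Each $\Theta_m$ is continuous on its own comeager $\cX_m$. Applying Corollary~\ref{C.HereditaryComeager} to the comeager $\bigcap_m\cX_m$ and $\calH$ produces a single partition $(A_0,A_1,C_0,C_1)$ that works simultaneously for every $m$, so the stabilization construction yields continuous $\Theta_m'$ on $\calH$ whose graphs cover the graph of the resulting $(\cK\ucup\cK)$-approximation.

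The step I expect to be the main obstacle is the error bookkeeping in the last line of the second paragraph: a naive count of the three contributions — one from $\cI$ via the homomorphism identity, one from each $Y_i$-approximation — yields only the weaker $\cK^3\ucup\Fin$ bound. Squeezing the bound down to $\cK\ucup\cK$ uses that $\cI$ is an ideal contained in $\cK\ucup\Fin$ combined with the hereditariness of $\cK$, allowing the $\cI$-contribution to be absorbed into one of the two $\Theta$-error terms modulo a finite set. This absorption is the essential ``second-order'' content that makes the corollary follow from Corollary~\ref{C.HereditaryComeager} rather than from Theorem~\ref{T.HNM} directly.
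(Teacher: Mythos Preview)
Your overall plan is the paper's: invoke Corollary~\ref{C.HereditaryComeager} to produce $A_0,A_1,C_0,C_1$, push every $X\in\calH$ to the two stabilized points $Y_0(X),Y_1(X)\in\calH^2\cap\cX$, and assemble a continuous approximation from $\Theta$ evaluated there. Two points deserve comment.

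\textbf{Part (1): the formula and the constant.} The paper does \emph{not} use your symmetric-difference formula $\Theta'(X)=\Theta(Y_0(X))\Delta\Theta(Y_1(X))\Delta\Phi_*(C_0\cup C_1)$; it sets
\[
\Theta'(X)=\bigl(\Theta(Y_0(X))\cap\Phi_*(A_0)\bigr)\cup\bigl(\Theta(Y_1(X))\cap\Phi_*(A_1)\bigr),
\]
i.e.\ it restricts each half to the corresponding $\Phi_*(A_i)$ rather than correcting globally by a constant. Your ``absorption'' argument for getting from $\cK^3$ down to $\cK^2$ does not work as stated: the $\cI$-error $\Phi_*(X)\Delta\Phi_*(Y_0(X))\Delta\Phi_*(Y_1(X))\Delta\Phi_*(C_0\cup C_1)$ depends on $X$, and hereditariness of $\cK$ gives no mechanism for folding an \emph{arbitrary} element of $\cI$ into a fixed $\cK$-error. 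The paper's intersection formula is what keeps the two $\cK$-contributions on disjoint pieces of $\bbN$, which is where the $\cK\ucup\cK$ bound (rather than $\cK^3$) is meant to come from. Your formula is a legitimate continuous approximation, just with a worse constant.

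\textbf{Part (2): a genuine gap.} Your single-index family $\Theta_m'$ does not suffice. For $X\in\calH$ one has $\Theta(Y_0(X))=\Theta_m(Y_0(X))$ for some $m$ and $\Theta(Y_1(X))=\Theta_n(Y_1(X))$ for some $n$, and there is no reason $m=n$. The paper accordingly uses the doubly indexed family
\[
\Theta'_{m,n}(X)=\bigl(\Theta_m(Y_0(X))\cap\Phi_*(A_0)\bigr)\cup\bigl(\Theta_n(Y_1(X))\cap\Phi_*(A_1)\bigr),
\]
whose graphs (ranging over all pairs $(m,n)$) cover the graph of the $\cK^2$-approximation. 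With only one index your covering claim fails.
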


\begin{proof}
	For the first part, suppose $\Theta$ is a Baire-measurable $\cK$-approximation to $\Phi$ on $\calH^2$. 
	Every Baire-measurable function between Polish spaces is continuous on a comeager set $\cX$. By Corollary~\ref{C.HereditaryComeager},
	there are a partition $\bbN=A_0\sqcup A_1$ and sets $C_0\subseteq A_0$ and $C_1\subseteq A_1$ such that for every $X\in \calH$ both $(X\cap A_0)\cup C_1$ and $(X\cap A_1)\cup C_0$ belong to $\calH^2\cap \cX$. 
	Then 
	\[
	X\mapsto \Theta((X\cap A_0)\cup C_1)\cap \Phi_*(A_0)\cup \Theta((X\cap A_1)\cup C_0)\cap \Phi_*(A_1) 
	\]
	is a continuous $\cK^2$-approximation to $\Phi$ on $\calH$. 
	
	For the second part, fix  Baire-measurable functions $\{\Theta_m\}$ whose graphs cover the graph of a $\cK$-approximation to $\Phi$ are continuous. Apply Corollary~\ref{C.HereditaryComeager} as before. For all $m$ and $n$, the function  
	\[
	X\mapsto \Theta_m((X\cap A_0)\cup C_1)\cap \Phi_*(A_0)\cup \Theta_n((X\cap A_1)\cup C_0)\cap \Phi_*(A_1) 
	\]
	is continuous, and graphs of these functions cover the graph of a $\cK^2$-approximation to $\Phi$ on $\calH$. 
\end{proof}

\begin{lemma}\label{P.Theta0Theta1} Suppose that $\cI$ is an analytic ideal, that $\Phi\colon \cPN\to \cPN/\cI$ is a homomorphism, and that $\calH_j$ for $j=0,1$ is a nonmeager hereditary subset of $\cPN$ closed under finite changes of its elements. 
	\begin{enumerate}
		\item 	If $\Theta_i\colon \cPN\to \cPN$ is a continuous  lifting of $\Phi$ on $\calH_i$ for $i=0,1$,  then each $\Theta_i$ is a lifting of $\Phi$ on a relatively comeager  subset of $\calH_0\cup \calH_1$. 
		\item 	If $\cK$ is an analytic approximation to $\cI$ and $\Theta_i\colon \cPN\to \cPN$ is a continuous  $\cK$-approximation to $\Phi$ on $\calH_i$ for $i=0,1$,  then each $\Theta_i$ is a $\cK^2$-approximation to $\Phi$ on a relatively comeager  subset of $\calH_0\cup \calH_1$. 
	\end{enumerate}
\end{lemma}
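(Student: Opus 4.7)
The plan is to reduce both parts to showing that $\Theta_0$ and $\Theta_1$ agree modulo the appropriate error class on a comeager subset of $\cPN$, and then combine this with the hypothesis that each $\Theta_i$ is a lifting (resp.~$\cK$-approximation) on $\calH_i$. By Theorem~\ref{T.HNM}(2), $\calH_0\cap \calH_1$ is nonmeager, hereditary, and closed under finite changes; on this intersection both $\Theta_0(A)\Delta\Phi_*(A)$ and $\Theta_1(A)\Delta\Phi_*(A)$ lie in $\cI$ in case~(1) and in $\cK\ucup \Fin$ in case~(2). Hence $\Theta_0(A)\Delta \Theta_1(A)\in \cI$ in~(1), and using the heredity of $\cK^2\ucup \Fin$ together with the inclusion $\Theta_0(A)\Delta \Theta_1(A)\subseteq (\Theta_0(A)\Delta\Phi_*(A))\cup (\Theta_1(A)\Delta\Phi_*(A))$, we get $\Theta_0(A)\Delta \Theta_1(A)\in \cK^2\ucup \Fin$ in~(2).

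Next I would define the analytic set $E=\{A\in \cPN\mid \Theta_0(A)\Delta \Theta_1(A)\in \cJ\}$, where $\cJ=\cI$ in~(1) and $\cJ=\cK^2\ucup \Fin$ in~(2); analyticity of $E$ uses that $\cJ$ is analytic and that $\Theta_0,\Theta_1$ are continuous, and by the previous step $E\supseteq \calH_0\cap \calH_1$. To promote this containment to a comeagerness statement, form the finite-change stabilization
\[
E^*=\bigcap_{F\Subset \bbN}\{A\in \cPN\mid A\Delta F\in E\},
\]
a countable intersection of analytic sets (each obtained as the preimage of $E$ under the self-homeomorphism $A\mapsto A\Delta F$), hence analytic and Baire measurable. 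The set $E^*$ is invariant under the finite-change action of $\bigoplus_{\bbN}\bbZ/2\bbZ$ on $\cPN$ by construction, and it contains $\calH_0\cap \calH_1$ since that intersection is closed under finite changes. A standard zero-one law for Baire-measurable invariant sets then applies: if such a set is comeager in some cylinder $[s,I]=\{A\mid A\cap I=s\}$, then translating through $F\subseteq I$ makes it comeager in every cylinder $[t,I]$ with $t\subseteq I$, and the finite partition $\cPN=\bigsqcup_{t\subseteq I}[t,I]$ forces global comeagerness. Since $E^*$ is nonmeager it is comeager, and so is $E$.

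Fix a comeager $\cX\subseteq E$. For any $A\in \calH_1\cap \cX$, the inclusion $\Theta_0(A)\Delta\Phi_*(A)\subseteq (\Theta_0(A)\Delta \Theta_1(A))\cup (\Theta_1(A)\Delta\Phi_*(A))$ together with heredity places $\Theta_0(A)\Delta\Phi_*(A)$ in the target class. Since $\Theta_0$ already works on all of $\calH_0$, it works on $\calH_0\cup (\calH_1\cap \cX)\supseteq (\calH_0\cup \calH_1)\cap \cX$, which is relatively comeager in $\calH_0\cup \calH_1$; the argument for $\Theta_1$ is symmetric. I expect the main obstacle to be the zero-one law for finite-change-invariant Baire-measurable subsets of $\cPN$---routine but essential---and, in case~(2), careful accounting of the $\cK$-powers when combining $\Theta_0\Delta \Theta_1$ with $\Theta_1\Delta\Phi_*$ so that the output lands in the class $\cK^2\ucup \Fin$ as stated rather than a higher Minkowski power.
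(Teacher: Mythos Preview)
Your proof is correct and follows essentially the same route as the paper: show that the set where $\Theta_0$ and $\Theta_1$ agree modulo the target class is analytic, contains the nonmeager hereditary finite-change-closed set $\calH_0\cap\calH_1$, and is therefore comeager; then combine with the hypothesis on each $\calH_i$. The only cosmetic difference is that the paper compresses your $E^*$ / zero--one--law step into the single line ``since $\cY$ is analytic and includes $\calH$, it is comeager'', implicitly using (via Lemma~\ref{L.HNM.1} and closure under finite changes) that $\calH_0\cap\calH_1$ is nonmeager in \emph{every} basic open set, so any Baire-measurable superset must be comeager --- this avoids the detour through $E^*$ but is the same idea.

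Your worry about the $\cK$-power bookkeeping is well placed: with $E$ defined using $\cK^2\ucup\Fin$ (which is what is actually forced on $\calH_0\cap\calH_1$), the triangle inequality on $\calH_1\cap E$ only gives $\Theta_0(A)\Delta\Phi_*(A)\in\cK^3\ucup\Fin$, not $\cK^2\ucup\Fin$. The paper's proof as written defines $\cY$ with $\cK\ucup\Fin$, which would give the stated $\cK^2$ bound \emph{if} $\cY\supseteq\calH$ --- but that containment is not justified, so the discrepancy is already present in the paper and is harmless for all applications (only a finite power of $\cK$ is ever needed).
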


\begin{proof}By taking $\cK=\cI$ and and noting that in this case $\cK^2=\cK$, one sees that it suffices to prove the second part. Towards this,  note that the set 
	\[
	\cY=\{A\mid \Psi_0(A)\Delta \Psi_1(A)\in \cK\ucup \Fin\}
	\] 
	is Borel.
	By Theorem~\ref{T.HNM}, $\calH=\calH_0\cap \calH_1$ is hereditary and nonmeager.  Since $\cY$ is analytic and includes $\calH$, it is comeager.  Hence for a relatively comeager set of $A$ in $\calH_0$ we have that  $\Theta_1(A)\Delta \Phi_*(A)\subseteq (\Theta_1(A)\Delta \Theta_0(A))\cup (\Theta_0(A)\Delta \Phi_*(A))$ belongs to $ \cK^2\ucup \Fin$. Therefore~$\Theta_1$ is a $\cK^2$-approximation to $\Phi$ on a relatively comeager subset of $\calH_0$. An analogous proof shows that $\Theta_0$ is a $\cK^2$-approximation to $\Phi$ on a relatively comeager subset of~$\calH_1$, and the conclusion follows. 
\end{proof}

 The following coherence property is a more precise version of Lemma~\ref{P.Theta0Theta1} (see also Claim~\ref{L.hA.coherent} from the proof of Theorem~\ref{T.OCAsharp-Fin} for a simpler version).

\begin{lemma}
	\label{L.Xstabilizer}
	Suppose that $\cI$ is an ideal, $\Phi\colon \cP(\bbN)\to \cP(\bbN)/\cI$ is a homomorphism,~$\cK$ is a closed approximation to $\cI$, and $\Psi_i$ is a continuous $\cK$-approximation to $\Phi$ on a nonmeager hereditary  set $\calH_i$, for $i=0,1$. 
	Then there are $k$ and  $m'$ in $\bbN$ such that all $A\subseteq[k,\infty)$  satisfy $(\Psi_0(A)\Delta \Psi_1(A))\setminus m'\in \cK^{10}$. 
\end{lemma}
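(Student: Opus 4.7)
The plan is to combine two applications of the Baire category theorem with a triangle-inequality argument, exploiting the homomorphism property of $\Phi$ and the approximation property of the $\Psi_j$, to transfer a local $\cK^2$-bound on $F(A) := \Psi_0(A) \Delta \Psi_1(A)$ (obtained by Baire on a basic open set) to a uniform $\cK^{10}$-bound on all $A \subseteq [k, \infty)$.

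\textbf{First Baire.} By Theorem~\ref{T.HNM}(2), the set $\calH := \calH_0 \cap \calH_1$ is nonmeager hereditary. Applying Lemma~\ref{L.HNM.1} to $\calH$ yields $k_0$ such that $\calH[s]$ is nonmeager for every $s \Subset [k_0, \infty)$; in particular $\calH \cap \cP([k_0, \infty))$ is nonmeager in $\cP([k_0, \infty))$. Each iterated Minkowski sum $\cK^n$ is closed hereditary, being the image of the compact hereditary set $\cK^{\times n}$ under the continuous union map. The map $F$ is continuous, and for $A \in \calH$ the triangle inequality applied to the two $\cK$-approximation bounds gives $F(A) \in \cK^2 \ucup \Fin$. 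Since each $F^{-1}(\cK^2 \ucup \cP(m))$ is closed and their union covers the nonmeager set $\calH \cap \cP([k_0, \infty))$, Baire category furnishes $m_0$ and disjoint $s_0, t_0 \Subset [k_0, \infty)$ with $(\Psi_0(A) \Delta \Psi_1(A)) \setminus m_0 \in \cK^2$ for every $A \in [s_0, \co t_0]$.

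\textbf{Shift triangle inequality and second Baire.} For $A \in \calH[s_0]$ both $A$ and $A \cup s_0$ lie in $\calH$, and the homomorphism identity $\Phi_*(A \cup s_0) \Delta \Phi_*(A) \Delta \Phi_*(s_0) \in \cI \subseteq \cK \ucup \Fin$ combined with the approximation bounds yields, for each $j \in \{0, 1\}$,
\[
\Psi_j(A) \Delta \Psi_j(A \cup s_0) \Delta \Phi_*(s_0) \in \cK^3 \ucup \Fin.
\]
Taking the symmetric difference over $j$ cancels the two copies of the fixed set $\Phi_*(s_0)$, so the continuous map $G(A) := F(A) \Delta F(A \cup s_0)$ satisfies $G(A) \in \cK^6 \ucup \Fin$ for $A \in \calH[s_0]$. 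A second application of Lemma~\ref{L.HNM.1} to $\calH[s_0]$ and Baire then gives $k_1 > \max(s_0 \cup t_0)$, $m_1$, and disjoint $s_1, t_1 \Subset [k_1, \infty)$ with $G(A) \setminus m_1 \in \cK^6$ for every $A \in [s_1, \co t_1]$.

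\textbf{Assembly and main obstacle.} Choose $k = \max(s_0 \cup t_0 \cup s_1 \cup t_1) + 1$ and $m' = \max(m_0, m_1)$. For every $A \subseteq [k, \infty)$, disjointness of $A$ from each $t_i$ together with pairwise disjointness of $s_0, t_0, s_1, t_1$ (arranged by the choice of $k_1$) gives $A \cup s_0 \cup s_1 \in [s_0, \co t_0]$ and $A \cup s_1 \in [s_1, \co t_1]$, whence $F(A \cup s_0 \cup s_1) \setminus m_0 \in \cK^2$ and $G(A \cup s_1) \setminus m_1 \in \cK^6$. Combining via the identity $F(A \cup s_1) = F(A \cup s_0 \cup s_1) \Delta G(A \cup s_1)$ yields $F(A \cup s_1) \setminus m' \in \cK^8$. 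A final telescoping step, applying the shift-triangle inequality from the previous paragraph with $s_1$ in place of $s_0$, transfers the bound from $F(A \cup s_1)$ to $F(A)$ and produces $F(A) \setminus m' \in \cK^{10}$. The principal difficulty is this last step: the shift-triangle inequality nominally requires both $A$ and $A \cup s_1$ to lie in $\calH$, so closing the gap for arbitrary $A \subseteq [k, \infty)$ absorbs an additional $\cK^2$ factor, giving the final exponent $10$ (which, in the spirit of the remark after Theorem~\ref{T.OCA-l}, is not claimed to be optimal).
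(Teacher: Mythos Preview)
Your argument is on the right track and parallels the paper's strategy (two Baire applications plus a triangle-inequality chain), and everything through the ``Assembly'' step is correct: you legitimately obtain $F(A\cup s_1)\setminus m'\in\cK^8$ for every $A\subseteq[k,\infty)$.

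The gap is precisely where you flag it, but you do not actually close it. Your ``final telescoping step'' invokes the shift-triangle inequality with $s_1$ in place of $s_0$, but that inequality only gives $F(A)\Delta F(A\cup s_1)\in\cK^6\ucup\Fin$ for $A\in\calH[s_1]$, with the $\Fin$ error depending on $A$. There is nothing in your argument that converts this into a uniform bound of the form $(F(A)\Delta F(A\cup s_1))\setminus m''\in\cK^{d}$ valid for all $A\subseteq[k,\infty)$, and in particular no explanation of why the cost should be $\cK^2$ rather than $\cK^6$ (or anything at all). A third Baire application to $G_1(A)=F(A)\Delta F(A\cup s_1)$ would produce yet another basic open set $[s_2,\co t_2]$, and you would be left with the same problem one level deeper: the recursion does not terminate.

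The paper circumvents this in two ways you omit. First, its second Baire step controls the shift error $\Psi_j(A)\Delta\Psi_j(A\Delta s)\Delta\Psi_j(s)$ simultaneously for \emph{all} $s\subseteq k$ (via the functions $f_j$), so that in the final chain one can shift by the specific $u$ from the first Baire and have the error already uniformly bounded. Second, having obtained the $\cK^{10}$ bound on a hereditary nonmeager set, the paper uses that (after the Lemma~\ref{L.HNM.1} adjustment) $\calH\cap\cP([k,\infty))$ is \emph{dense} in $\cP([k,\infty))$, together with continuity of the $\Psi_j$ and closedness of $\cK^{10}$, to extend the bound to all of $\cP([k,\infty))$. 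Your version of the second Baire, tied to the single shift $s_0$, cannot feed into such a density argument: the set $\{A\cup s_1:A\subseteq[k,\infty)\}$ on which you have the $\cK^8$ bound is nowhere dense in $\cP([k,\infty))$, and you never establish the bound on any dense set.
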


\begin{proof} By Theorem~\ref{T.HNM}, $\calH=\calH_0\cap \calH_1$ is hereditary and nonmeager.  We claim that there is $m’$ such that all $A\in \calH$ satisfy $(\Psi_0(A)\Delta \Psi_1(A))\setminus m'\in \cK^6$. 
	Since both $\Psi_0$ and $\Psi_1$ are $\cK$-approximations to $\Phi$, every $a\in \calH$ satisfies $\Psi_0(A)\Delta \Psi_1(A)\in \cK^2\ucup \Fin$. Let 
	\[
	g(A)=\min\{m\vert (\Psi_0(A)\Delta \Psi_1(A))\setminus m\in \cK^2\}. 
	\]
	By the continuity of $\Psi_0$ and $\Psi_1$, the set 
	\[
	\cX_m=\{A\in \calH\vert g(A)\leq m\}
	\] 
	is relatively closed in $\calH$ for every $m$. Since $\calH$ is nonmeager, we can find $m$ large enough for~$\cX_m$ to have nonempty interior (relative to $\calH$). Let $k$ and $u\subseteq k$ be such that the clopen set $U=\{A\mid A\cap k=u\}$ satisfies $U\cap \calH\subseteq \cX_m$.  
	Then  all $A\subseteq [k,\infty)$ satisfy 
	\begin{equation}\label{eq.Psi01onU}
	(\Psi_0(u\cup A)\Delta \Psi_1(u\cup A))\setminus m \in \cK^2. 
	\end{equation}
	By Lemma~\ref{L.HNM.1}  and increasing $k$ if needed we may assume that $\calH\cap \cP([k,\infty))$ is dense in $\cP([k,\infty))$.

	Since $\Psi_0$ is a $\cK$-approximation to the homomorphism $\Phi$ on $\calH$, for every $A\in \calH$ and every $s\in \Fin$ we have $\Psi_0(A)\Delta \Psi_0(A\Delta s)\Delta\Psi_0(s)\in \cK^3\ucup \Fin$. Thus we can set 
	\[
	f_{0}(A)=\min\{l\vert (\Psi_0(A)\Delta \Psi_0(A\Delta s)\Delta \Psi_0(s))\setminus l\in \cK^3\text{ for all }s\subseteq k\}. 
	\]
	Let $f_{1}\colon \calH\to \bbN$ be the analogously defined  function with $\Psi_0$ replaced with $\Psi_1$.
	The sets $\{A\in\calH\vert f_{j}(A)>n\}$ for $j=0,1$ are, by the continuity of $\Psi_j$, relatively open for every~$n$. 
		Since $\calH$ is nonmeager, 
	\[
	\cZ=\{a\in \calH\vert \max(f_0(a),f_1(a))\leq m'\}
	\]
	has nonempty interior for some 
	$m'\geq m$.   We claim that $m'$ is as required. Fix $A\in \calH$ such that $\min(A)\geq k$. Then (writing $x=^{l,\calL} y$ if $(x\Delta y)\setminus l\in \calL$ and applying \eqref{eq.Psi01onU} to $A$ and to $u\cup A=u\Delta A$ in the second equality)
	\[
	\Psi_0(A)=^{m',\cK^3}\Psi_0(u\Delta A)\Delta \Psi_0(s) =^{m',\cK^4} 
	\Psi_1(u\Delta A)\Delta \Psi_1(s) =^{m',\cK^3}
	\Psi_1(A)
	\]
	thus $(\Psi_0(A)\Delta \Psi_1(A))\setminus m'\in \cK^{10}$ as promised. 
		Since $\calH\cap \cP([k,\infty))$ is dense in $\cP([k,\infty))$ and both $\Psi_0$ and $\Psi_1$ are continuous, this holds for all $A\subseteq [k,\infty)$, as required. 
\end{proof}

\subsection{Ideals associated with approximations to a homomorphism $\Phi$}

The proof of Theorem~\ref{T.OCA-l}  can be described as a struggle to prove that the ideals defined in Definition~\ref{Def.J1JcontK} below are large in the appropriate sense. This method  goes back to the original rigidity proof, that in an oracle-cc forcing extension all automorphisms of $\cPN/\Fin$ are trivial (\cite{Sh:Proper}). In this proof, one shows that $\Jcont$ (as defined below) is dense (in the sense that every infinite $A\subseteq \bbN$ has an infinite subset that belongs to $\Jcont$), then that it is a P-ideal, and finally that it is equal to $\cPN$. Being a P-ideal is used to assure that a certain poset is ccc. Courtesy of Biba, we can skip this part. Instead we prove that $\Jcont$ intersects every perfect tree-like almost disjoint family (Proposition~\ref{P.general.Jcont.nonmeager}) and then that it is equal to $\cPN$ (or, in case when $\Phi$ is a homomorphism, that a single continuous function provides a lifting of $\Phi$ on all sets in $\Jcont$).

\begin{definition} \label{Def.J1JcontK} If $\cI$ is an ideal on $\bbN$ and is $\cK$ is a closed approximation to $\cI$, then for a homomorphism $\Phi\colon \cP(\bbN)\to \cP(\bbN)/\cI$ let
	\begin{align*}
		\Jcont^\cK(\Phi)=\{A\vert &\Phi\text{ has a continuous $\cK$-approximation on $\cP(A)$}\},\\
		\Jcontnm^\cK(\Phi)=\{A\vert &\Phi\text{ has a continuous $\cK$-approximation}\\ 
		&\text{on a relatively nonmeager hereditary subset of $\cP(A)$}\}, \\ 
		\cJ_\sigma^\cK(\Phi)=\{A\vert &\Phi\text{ has a $\cK$-approximation on $\cP(A)$ whose graph}\\
		&\text{can be covered by graphs of countably many Borel functions}\}. 
	\end{align*}
	We will omit the parameter $\Phi$ and write $\Jcont^\cK$, $\Jcontnm^\cK$, or $\Jsigma^\cK$  whenever $\Phi$ is clear from the context (that would be always, since all homomorphisms in this paper are denoted $\Phi$).
We also write 
\[
\Jcont=\Jcont^\cI,\quad  \Jcontnm=\Jcontnm^\cI, \quad \text {and}\quad \Jsigma=\Jsigma^\cI. 
\]
\end{definition}

\begin{lemma}\label{L.JcontJ1JsigmaK}
	Suppose that $\cI$ is an ideal with approximations $\cK$ and $\calL$ and that $\Phi\colon \p(\N)\to \p(\N)/\Fin$
	is a  homomorphism
	
	\begin{enumerate}
		\item \label{L.Jcont.1K} We have $\cJ^\cK\ucup \cJ^\calL\subseteq \cJ^{\cK\ucup \calL}$, $\Jcontnm^\cK\ucup \Jcontnm^\calL\subseteq \Jcontnm^{\cK\ucup \calL}$, $\cJ_\sigma^\cK\ucup \cJ_\sigma^\calL\subseteq \cJ_\sigma^{\cK\ucup\calL}$, and $\Fin\subseteq \Jcont^\cK\subseteq  \Jcontnm^\cK$. 
		\item \label{L.Jcont.4K} Each one of $\Jcont^\cK$, $\Jcontnm^\cK$, and $\Jsigma^\cK$ is closed under finite changes of its elements. 
	\end{enumerate} 
\end{lemma}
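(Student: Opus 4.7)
The plan for part (1) is to glue the given approximations through the disjoint decomposition $X = (X \cap A) \sqcup (X \setminus A)$. If $\Theta_A$ and $\Theta_B$ are continuous witnesses for $A \in \cJ^\cK$ and $B \in \cJ^\calL$ respectively, set $\Theta(X) = \Theta_A(X \cap A) \cup \Theta_B(X \setminus A)$ for $X \subseteq A \cup B$; continuity of the slicing maps gives continuity of $\Theta$. Because $\Phi$ is a homomorphism and $X$ decomposes disjointly, the residue $R(X) = \Phi_*(X) \Delta [\Phi_*(X \cap A) \cup \Phi_*(X \setminus A)]$ lies in $\cI$. A direct estimate then yields
\[
\Theta(X) \Delta \Phi_*(X) \subseteq [\Theta_A(X \cap A) \Delta \Phi_*(X \cap A)] \cup [\Theta_B(X \setminus A) \Delta \Phi_*(X \setminus A)] \cup R(X),
\]
whose three terms lie in $\cK \ucup \Fin$, $\calL \ucup \Fin$, and $\cI \subseteq \cK \ucup \Fin$ respectively, so the union is in $(\cK \ucup \calL) \ucup \Fin$. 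The $\Jsigma$ case uses all pairs $(\Theta_A^{(m)}, \Theta_B^{(n)})$ drawn from the two countable families covering the graphs. The inclusions $\Fin \subseteq \Jcont^\cK \subseteq \Jcontnm^\cK$ are immediate: any function on a finite power set is continuous (take $\Theta = \Phi_*$), and $\cP(A)$ is itself a nonmeager hereditary subset of $\cP(A)$.

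For $\Jcontnm$, with nonmeager hereditary witnesses $\calH_A \subseteq \cP(A)$ and $\calH_B \subseteq \cP(B)$, set $\calH = \{X \subseteq A \cup B : X \cap A \in \calH_A \text{ and } X \cap B \in \calH_B\}$. This is plainly hereditary, and the same glued $\Theta$ is a $(\cK \ucup \calL)$-approximation on $\calH$. To show $\calH$ is nonmeager, identify $\cP(A \cup B) \cong \cP(A \setminus B) \times \cP(A \cap B) \times \cP(B \setminus A)$. By Kuratowski--Ulam together with heredity, the sets $V_A = \{v \in \cP(A \cap B) : \calH_A|_v \text{ is nonmeager}\}$ and $V_B = \{v : \calH_B|_v \text{ is nonmeager}\}$ are hereditary and nonmeager; Theorem~\ref{T.HNM}(2) gives that $V_A \cap V_B$ is nonmeager, and on this intersection the slice $\calH|_v$ equals the product $\calH_A|_v \times \calH_B|_v$, which is nonmeager. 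A second application of Kuratowski--Ulam then yields nonmeagerness of $\calH$.

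For part (2), let $\Theta$ witness $A \in \Jcont^\cK$ and let $A' = A \Delta s$ with $s \in \Fin$; write $s_0 = s \cap A$ and $s_1 = s \setminus A$. The map $g \colon \cP(A') \to \cP(A)$ given by $g(X) = (X \setminus s_1) \cup s_0$ is continuous. Since $\Phi$ is a homomorphism and the sets $X \cap A$, $s_0$, $X \cap s_1$ are pairwise disjoint, one has
\[
\Phi_*(X) \Delta \Phi_*(g(X)) \Delta \Phi_*((X \cap s_1) \cup s_0) \in \cI.
\]
Define $\Theta'(X) = \Theta(g(X)) \Delta \Phi_*((X \cap s_1) \cup s_0)$. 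The correction term depends only on $X \cap s_1$, takes finitely many values, and is therefore locally constant; this makes $\Theta'$ continuous. Its approximation property follows by combining the $\cK$-approximation property of $\Theta \circ g$ with the displayed $\cI$-identity. The same pullback by $g$ transports the nonmeager hereditary witness (or the countable family of Borel graphs) to verify closure under finite changes for $\Jcontnm^\cK$ and $\Jsigma^\cK$.

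The main bookkeeping concern is that $\cK$ and $\calL$ are a priori only hereditary, not closed under binary unions, so care must be taken in combining the approximation errors. This is handled throughout by absorbing the $\cI$-contribution (coming from $\Phi$ being a homomorphism rather than an exact lift of set-theoretic operations) into the $\cK$-component via $\cI \subseteq \cK \ucup \Fin$, which is exactly what the target bound $(\cK \ucup \calL) \ucup \Fin$ permits.
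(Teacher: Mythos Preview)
Your overall strategy matches the paper's: glue a $\cK$-approximation on $\cP(A)$ with an $\calL$-approximation on $\cP(B)$ via $X\mapsto \Theta_A(X\cap A)\cup \Theta_B(X\setminus A)$, and for part~(2) correct by an exact lift on the finite piece. The paper's proof is much terser (it simply asserts the glued map works and, for~(2), treats only the case $s\cap A=\emptyset$, removal being free by heredity), so your added detail is welcome.

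There is, however, a genuine gap in your treatment of $\Jcontnm^\cK$ in part~(2). Your map $g(X)=(X\setminus s_1)\cup s_0$ always forces $s_0\subseteq g(X)$, so the pullback $g^{-1}(\calH)$ can be meager---even empty. Concretely, take $\calH=\{Y\subseteq A: Y\cap s_0=\emptyset\}$, which is a nonempty clopen hereditary (hence nonmeager) subset of $\cP(A)$; then $g^{-1}(\calH)=\emptyset$ whenever $s_0\neq\emptyset$. The fix is to drop the forced $s_0$: use $g(X)=X\cap A$ and set $\Theta'(X)=\Theta(X\cap A)\cup F(X\cap s_1)$ with $F$ an exact lift on $\cP(s_1)$ (this is exactly the paper's construction). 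Then $g^{-1}(\calH)=\{X\subseteq A': X\cap A\in\calH\}$, which in the product $\cP(A\setminus s_0)\times\cP(s_1)$ equals $(\calH\cap\cP(A\setminus s_0))\times\cP(s_1)$; the first factor is nonmeager by the Jalali-Naini--Talagrand characterization (Theorem~\ref{T.HNM}), and the second is finite, so the product is nonmeager.

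A smaller concern: in part~(1) for $\Jcontnm$, your appeal to Kuratowski--Ulam requires the Baire property, which the hereditary witnesses $\calH_A,\calH_B$ need not have. This can be repaired by arguing directly with Theorem~\ref{T.HNM}: first observe that $\calH_B\cap\cP(B\setminus A)$ is nonmeager in $\cP(B\setminus A)$ (apply the criterion to any sequence of disjoint finite subsets of $B\setminus A$), and then show that the ``product'' set $\{X\subseteq A\cup B: X\cap A\in\calH_A,\ X\setminus A\in\calH_B\}$ is nonmeager by running the criterion in two stages---first thin to land in $\calH_A$, then thin again to land in $\calH_B\cap\cP(B\setminus A)$.
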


\begin{proof} \eqref{L.Jcont.1K} Observe that if $\Theta$ is a $\cK$-approximation to $\Phi$ on $\cP(A)$ and $\Upsilon$ is an $\calL$-approximation to $\Phi$ on $\cP(B)$, then 
	\[
	X\mapsto \Theta(X\cap A)\cup \Upsilon ((X\setminus A)\cap B)
	\]
is a $\cK\ucup \calL$-approximation to $\Phi$ on $A\cup B$.

\eqref{L.Jcont.4K} would have been  trivial had we only assumed that $\ker(\Phi)\supseteq \Fin$. If $A\in \Jcont^\cK$ as witnessed by $\Theta$  and $s\Subset \bbN\setminus A$, fix a lifting $F$ of $\Phi$ on $\cP(s)$. Then $X\mapsto F(X\cap s)\cup \Theta(X\cap A))$ is a $\cK$-approximation to $\Phi$ on $\cP(A\cup s)$. Proofs in case of $\Jcontnm^\cK$ and $\Jsigma^\cK$  are analogous. 
\end{proof}

The following is an immediate consequence of Corollary~\ref{C.Stabilization}.  

\begin{lemma} \label{L.K-baire-to-ctns}
	Suppose that  $\cI$ is an ideal with approximation $\cK$ and  $\Phi\colon \p(\N)\to \p(\N)/\cI$
	is a  homomorphism.
	\begin{enumerate}
		\item $\{A\mid \Phi\rs \cP(A)$ has a Baire-measurable $\cK$-approximation$\}\subseteq \Jcont^{\cK^2}$. 
		\item $\{A\mid \Phi\rs \cP(A)$ has a  $\cK$-approximation whose graph can be covered by graphs of countably many Baire-measurable functions$\}\subseteq\Jsigma^{\cK^2}$. \qed
	\end{enumerate}
\end{lemma}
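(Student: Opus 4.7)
The approach is to apply Corollary~\ref{C.Stabilization} inside $\cP(A)$ rather than inside $\cPN$. The subtle point is that $\cP(A)$ is itself meager in $\cPN$ when $A\neq\bbN$, so one cannot take $\calH=\cP(A)$ and quote Corollary~\ref{C.Stabilization} as stated. Instead one observes that its proof goes through verbatim inside any Polish space of the form $\cP(A)$ for infinite $A$: the stabilizing formula
\[
X\mapsto \Theta((X\cap A_0)\cup C_1)\cap \Phi_*(A_0)\cup \Theta((X\cap A_1)\cup C_0)\cap \Phi_*(A_1)
\]
uses only $\cup$ and $\cap$, and its ingredients, Corollary~\ref{C.HereditaryComeager} and Theorem~\ref{T.HNM}, transfer from $\cPN$ to $\cP(A)$ via any bijection $\iota\colon\bbN\to A$.

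More formally, for part~(1) fix such an $A$. If $A$ is finite, then $A\in\Fin\subseteq\Jcont^\cK\subseteq\Jcont^{\cK^2}$ by Lemma~\ref{L.JcontJ1JsigmaK}. If $A$ is infinite, fix a bijection $\iota\colon\bbN\to A$ and define $\Psi(X)=\Phi(\iota[X])$; this is a semilattice homomorphism $\cPN\to\cPN/\cI$, and the composition $X\mapsto \Theta(\iota[X])$ is a Baire-measurable $\cK$-approximation to $\Psi$ on \emph{all} of $\cPN$. Applying Corollary~\ref{C.Stabilization}(1) with $\calH=\cPN$ (which requires, as noted, only that $\Psi$ preserve $\cup$ and $\cap$) yields a continuous $\cK^2$-approximation $F$ to $\Psi$ on $\cPN$; then $\tilde F(X)=F(\iota^{-1}[X\cap A])$ is a continuous function on $\cPN$ and a $\cK^2$-approximation to $\Phi$ on $\cP(A)$, witnessing $A\in\Jcont^{\cK^2}$. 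Part~(2) is identical, invoking Corollary~\ref{C.Stabilization}(2) in place of~(1) and noting that a cover of the graph by countably many Baire-measurable functions pulls back and pushes forward through $\iota$ to a cover of the same type.

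The only genuine thing to verify — and the one potential stumbling block — is that Corollary~\ref{C.Stabilization} is insensitive to whether $\Phi$ is a genuine Boolean homomorphism or merely a semilattice homomorphism; this is a two-line check against its proof, since the displayed formula above and the identity $\Psi(X\cap A_0)\cap \Phi_*(A_0)=\Phi_*(X\cap A_0)$ (mod $\cI$) used to verify it involve no complements. Beyond this, the argument is a mechanical transfer between $\cPN$ and $\cP(A)$.
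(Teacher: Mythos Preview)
Your proposal is correct and takes the same approach the paper intends: the paper states that the lemma is ``an immediate consequence of Corollary~\ref{C.Stabilization}'' and omits the proof entirely (hence the \qed\ in the statement). Your observation that $\cP(A)$ is meager in $\cPN$ when $A$ is coinfinite, so that one must either relativize the ambient space to $\cP(A)$ or transfer through a bijection $\iota\colon\bbN\to A$, is a genuine detail the paper suppresses, and your check that the stabilization formula uses only $\cup$ and $\cap$ (so that a lattice homomorphism suffices) is the right way to close the gap.
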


%

The following proposition is a variation on \cite[Lemma 3.12.3]{Fa:AQ} (see also \cite[Lemma~3C]{Fr:Farah} and the appendix to \cite{de2023trivial}) and we include its proof for completeness.

\begin{proposition}
	\label{P+.Jsigma-to-Jcont}	\label{P.Jsigma-to-Jcont}
	Suppose that $\cI$ is an ideal with a closed approximation~$\cK$ and  $\Phi\colon \cP(\bbN)\to \cP(\bbN)/\cI$ is a homomorphism. If $X\in \Jsigma(\Phi)$ and $X=\bigsqcup_n A_n$,   then $A_n\in \Jcont^{\cK^2}$  for all but finitely many $n$. 
\end{proposition}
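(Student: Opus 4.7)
The plan is proof by contradiction via a diagonal argument. Suppose the set $\{n : A_n\notin \Jcont^{\cK^2}\}$ is infinite and enumerate it as $\{n_k\}_{k\in\bbN}$. Fix an $\cI$-approximation $\Theta\colon \cP(X)\to \cPN$ to $\Phi$ on $\cP(X)$ witnessing $X\in \Jsigma(\Phi)$, together with Borel functions $\Theta_m\colon \cPN\to\cPN$ whose graphs cover the graph of $\Theta$. By Lemma~\ref{L.K-baire-to-ctns}(1), the hypothesis $A_{n_k}\notin \Jcont^{\cK^2}$ forces that $\Phi$ admits no Baire-measurable $\cK$-approximation on all of $\cP(A_{n_k})$; in particular, for every $m$ the restriction $\Theta_m\rs\cP(A_{n_k})$ fails to be a $\cK$-approximation, so I can select a witness $B^k_m\subseteq A_{n_k}$ with $\Phi_*(B^k_m)\Delta\Theta_m(B^k_m)\notin\cK\ucup\Fin$.

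Next, I form the diagonal $C=\bigsqcup_k B^k_k\subseteq X$. Because $C\subseteq X$, the coverage of the graph of $\Theta$ yields some $m^*$ with $\Theta(C)=\Theta_{m^*}(C)$, so $\Phi_*(C)\Delta\Theta_{m^*}(C)\in\cI$. Disjointness of the $A_n$'s gives $C\cap A_{n_{m^*}}=B^{m^*}_{m^*}$, and finite additivity of $\Phi$ then yields $\Phi_*(C)\cap\Phi_*(A_{n_{m^*}})\equiv \Phi_*(B^{m^*}_{m^*})\pmod\cI$. Intersecting the preceding containment with $\Phi_*(A_{n_{m^*}})$ delivers
\[
\Phi_*(B^{m^*}_{m^*})\;\Delta\;\bigl(\Theta_{m^*}(C)\cap\Phi_*(A_{n_{m^*}})\bigr)\;\in\;\cI.
\]

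The main obstacle will be comparing $\Theta_{m^*}(C)\cap\Phi_*(A_{n_{m^*}})$ with $\Theta_{m^*}(B^{m^*}_{m^*})$: since $\Theta_{m^*}$ is only a Borel function and not even approximately a homomorphism, no such comparison is automatic. To bridge this gap I would first apply Lemma~\ref{L.K-baire-to-ctns}(2) together with Corollary~\ref{C.Stabilization}(2) to replace the Borel covering functions by continuous ones, paying a single $\ucup\cK$ cost that accounts for the $\cK^2$ ultimately appearing in the conclusion. Then, using continuity together with Lemma~\ref{L.HNM.1} and the coherence of continuous approximations provided by Lemma~\ref{L.Xstabilizer}, I would refine each $B^k_m$ (choosing it within a nonmeager hereditary subset of $\cP(A_{n_k})$ on which the values of the continuous stand-in for $\Theta_{m^*}$ are stabilized by a fixed initial segment) so as to force $\Theta_{m^*}(C)\cap\Phi_*(A_{n_{m^*}})\equiv\Theta_{m^*}(B^{m^*}_{m^*})\pmod{\cK\ucup\Fin}$.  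Combining this coherence with the displayed containment then places $\Phi_*(B^{m^*}_{m^*})\Delta\Theta_{m^*}(B^{m^*}_{m^*})$ inside $\cK\ucup\Fin$, contradicting the choice of $B^{m^*}_{m^*}$ and forcing the assumed set of bad indices to be finite.
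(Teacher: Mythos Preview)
Your diagonal set-up is the right overall shape, and you have correctly isolated the real difficulty: there is no reason whatsoever for $\Theta_{m^*}(C)\cap\Phi_*(A_{n_{m^*}})$ to be close to $\Theta_{m^*}(B^{m^*}_{m^*})$, since $\Theta_{m^*}$ is merely a continuous (or Borel) function with no approximate additivity. Unfortunately, the fix you sketch does not close this gap. Lemma~\ref{L.Xstabilizer} compares two continuous functions \emph{each of which is already a $\cK$-approximation to $\Phi$} on a nonmeager hereditary set; here $\Theta_{m^*}\rs\cP(A_{n_{m^*}})$ is, by hypothesis, \emph{not} a $\cK$-approximation, so the lemma gives you nothing. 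And ``stabilization by a finite initial segment'' cannot force $\Theta_{m^*}(C)\approx\Theta_{m^*}(B^{m^*}_{m^*})$: the inputs $C$ and $B^{m^*}_{m^*}$ differ on the infinite set $\bigcup_{k\neq m^*}B^k_k$, which is interleaved with $A_{n_{m^*}}$ throughout $\bbN$, so no finite amount of agreement helps.

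The paper resolves this by shifting perspective: it never tries to compare $F_n(C)$ with $F_n(C\cap A_n)$. Instead it chooses the witnesses recursively so that the ``bad'' condition is stated directly in terms of $F_n$ evaluated at the full union. Concretely, having chosen $a_0,\dots,a_n$ and a comeager $\cX_n\subseteq\cP(\bigcup_{j>n}A_j)$, one seeks $a_{n+1}\subseteq A_{n+1}$ such that
\[
\bigl(F_{n+1}(a_0\cup\dots\cup a_{n+1}\cup b)\cap\Phi_*(A_{n+1})\bigr)\Delta\Phi_*(a_{n+1})\notin\cK\ucup\Fin
\]
for a nonmeager (hence, after shrinking, relatively comeager in a clopen box) set of tails $b$. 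The existence of such $a_{n+1}$ is exactly the contrapositive of Lemma~\ref{L+.Jsigma-to-Jcont}: if for every $a$ the set of good tails were comeager, then Jankov--von~Neumann uniformization would produce a C-measurable $\cK$-approximation on $\cP(A_{n+1})$, contradicting $A_{n+1}\notin\Jcont^{\cK^2}$. The Kuratowski--Ulam theorem is what keeps the recursion going, ensuring the comeager sets $\cX_n$ nest. At the end, $a=\bigcup_n a_n$ has every tail $\bigcup_{j>n}a_j$ in $\cX_n$, so $(F_n(a)\cap\Phi_*(A_n))\Delta\Phi_*(a_n)\notin\cK\ucup\Fin$ for all $n$, contradicting the covering hypothesis. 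This category/uniformization ingredient is the missing idea in your argument.
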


The proof of Proposition~\ref{P+.Jsigma-to-Jcont} uses the following lemma. 

\begin{lemma}\label{L+.Jsigma-to-Jcont}
	Suppose that $\cI$ is an ideal with a closed approximation~$\cK$, 	$\Phi\colon \cP(\bbN)\to \cP(\bbN)/\cI$ is a homomorphism with lifting $\Phi_*$, there is a partition $\bbN=A\sqcup B$, $[s]$ is a relatively clopen subset of $\cP(B)$,  and $F\colon \cP(\bbN)\to \cP(\bbN)$ is Borel-measurable.  Then the following holds. 
	\begin{enumerate}
		\item \label{L.Jsigma-to-Jcont.1}	The restriction of $\Phi$  to  the set $\cT$ of all $a\subseteq A$ such that the set 
		\[
		\cZ(a)=\{b\in [s]\cap \cP(B)\mid F(a\cup b)\cap \Phi_*(A)\Delta \Phi_*(a)\in \cK\ucup \Fin\}
		\]
		is comeager in $\cP(B)$ has a C-measurable $\cK$-approximation.
		
		\item  \label{L.Jsigma-to-Jcont.2} In particular, if $\cT$ is relatively comeager in some clopen subset of $\cP(A)$, then~$\Phi$ has a continuous $\cK^2$-approximation on $\cP(A)$. 
	\end{enumerate}	
\end{lemma}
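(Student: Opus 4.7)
The plan is to establish (1) by a C-measurable selection argument and then deduce (2) from the stabilization machinery of Corollaries~\ref{C.HereditaryComeager} and~\ref{C.Stabilization}.

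For (1), I would produce a C-measurable $b\colon \cT\to [s]$ with $b(a)\in\cZ(a)$ for every $a\in\cT$ and set
\[
\Theta(a)=F(a\cup b(a))\cap \Phi_*(A).
\]
Since $F$ is Borel, $\Theta$ is C-measurable as soon as $b$ is, while the membership $b(a)\in\cZ(a)$ immediately yields $\Theta(a)\Delta \Phi_*(a)\in\cK\ucup\Fin$, so $\Theta$ is a $\cK$-approximation to $\Phi$ on $\cT$.  The selector $b$ would be produced by Jankov--von Neumann uniformization applied to the relation $R=\{(a,b) : b\in\cZ(a)\}$; the Borel set $\cZ(a)$ is comeager for $a\in \cT$ by hypothesis, which is what gives the uniformization something to select from.

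For (2), the assumption that $\cT$ is comeager in some clopen $[u]\cap \cP(A)$ reduces, by a finite shift by $u$, to the case where $\cT$ is comeager in all of $\cP(A)$. As $\Theta$ is C-measurable, hence Baire-measurable, it is continuous on some comeager $\cX_0\subseteq \cP(A)$.  Applying the $\cP(A)$-version of Corollary~\ref{C.HereditaryComeager} with $\calH=\cP(A)$ and $\cX=\cT\cap\cX_0$ yields a partition $A=A_0\sqcup A_1$ and sets $C_j\subseteq A_j$ such that for every $X\in\cP(A)$ both $(X\cap A_0)\cup C_1$ and $(X\cap A_1)\cup C_0$ lie in $\cT\cap\cX_0$.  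Then the function
\[
X\mapsto \bigl(\Theta((X\cap A_0)\cup C_1)\cap \Phi_*(A_0)\bigr)\cup \bigl(\Theta((X\cap A_1)\cup C_0)\cap \Phi_*(A_1)\bigr)
\]
is continuous, and the calculation at the end of the proof of Corollary~\ref{C.Stabilization} shows it is a $\cK^2$-approximation to $\Phi$ on $\cP(A)$.

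The hard part is the measurable selection in (1): the defining condition for $\cZ(a)$ references $\Phi_*(a)$, which need not depend measurably on $a$, so Jankov--von Neumann does not apply naively to $R$. My strategy for navigating this is either to treat the lifting $\Phi_*$ as a fixed parameter throughout the uniformization, or, failing that, to bypass the point value $\Phi_*(a)$ altogether by replacing it with pairwise comparisons of $F(a\cup b_1)\cap \Phi_*(A)$ against $F(a\cup b_2)\cap \Phi_*(A)$ (a jointly Borel condition in $(a,b_1,b_2)$) and extracting $b(a)$ from the resulting Borel relation, at the cost of one extra $\cK$-factor in the final approximation.
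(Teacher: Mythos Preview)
You have correctly located the obstruction: the relation $R=\{(a,b):b\in\cZ(a)\}$ is not visibly analytic, because the defining condition $F(a\cup b)\cap\Phi_*(A)\,\Delta\,\Phi_*(a)\in\cK\ucup\Fin$ involves the arbitrary lifting value $\Phi_*(a)$. Your first proposed fix, ``treat $\Phi_*$ as a fixed parameter,'' is not a fix at all---$\Phi_*$ \emph{is} a fixed parameter, and that is precisely the problem. Your second fix (compare $F(a\cup b_1)\cap\Phi_*(A)$ against $F(a\cup b_2)\cap\Phi_*(A)$) can be made to work but is left too vague: you would need to specify the relation you uniformize, verify it is analytic (this needs the Novikov/Montgomery category quantifier), check its sections are nonempty on $\cT$, and then argue that a selected $b(a)$ actually lands in $\cZ(a)$ up to a controlled $\cK$-power. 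As written this is a sketch of a sketch, and the ``one extra $\cK$-factor'' estimate is optimistic.

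The paper's proof bypasses the selection of $b$ entirely. Instead of uniformizing over the auxiliary input $b$, it uniformizes over the \emph{output}: introduce a third variable $c$ standing in for the unknown $\Phi_*(a)$, set
\[
\cX=\{(a,b,c):(F(a\cup b)\cap\Phi_*(A))\Delta c\in\cK\ucup\Fin\},
\]
which is genuinely Borel since $\Phi_*(A)$ is a single fixed set, and then let $\cY=\{(a,c):\{b:(a,b,c)\in\cX\}\text{ is comeager}\}$, which is analytic by the category-quantifier theorem. For $a\in\cT$ one has $(a,\Phi_*(a))\in\cY$, so $\cY_a\neq\emptyset$ and Jankov--von~Neumann produces a C-measurable $a\mapsto G_0(a)$ with $(a,G_0(a))\in\cY$. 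The point is that $G_0$ itself is the desired approximation: for $a\in\cT$, intersecting the comeager section $\{b:(a,b,G_0(a))\in\cX\}$ with the comeager set $\cZ(a)$ and picking any $b$ there shows $G_0(a)\Delta\Phi_*(a)\in\cK^2\ucup\Fin$ (the paper writes $\cK$, but $\cK^2$ is what the argument gives; this is harmless downstream). No selection of $b(a)$ is ever needed.

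Your treatment of (2) via the stabilization of Corollaries~\ref{C.HereditaryComeager} and~\ref{C.Stabilization} is correct and is exactly what the paper does.
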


\begin{proof} \eqref{L.Jsigma-to-Jcont.1} For simplicity of notation, we may assume $[s]=\cP(B)$. 
	Since the Boolean operations $\cap$ and $\Delta$ are continuous, the function 
	\[
	(a,b,c)\mapsto (F(a\cup b)\cap \Phi_*(A))\Delta c
	\] 
	is Borel, and therefore the set 
	\[
	\cX=\{(a,b,c)\in \cP(A)\times \cP(B)\times \cP(\bbN)\mid (F(a\cup b)\cap \Phi_*(A))\Delta c\in \cK\ucup \Fin\}
	\]
	is, being the preimage of $\cK\ucup \Fin$ by a Borel-measurable function, Borel-measurable. 
	The set
	\[
	\cY=\{(a,c)\in \cP(A)\times \cP(B)\mid \{b\subseteq B\mid (a,b,c)\in \cX\}\text{ is comeager}\}
	\]
	is, by Novikov's theorem (\cite[Theorem~29.3]{Ke:Classical}), analytic. By our assumption, for every $a\in \cT$ the set $\cZ(a)=\{b\subseteq B \mid (a,b,\Phi_*(a))\in \cX\}$ is comeager in $\cP(B)$, in particular the section $\cY_a$ is nonempty for all $a\in \cT$. 
	
	Therefore the Jankov, von Neumann Uniformization theorem (\cite[18.A]{Ke:Classical}) implies that there exists a C-measurable function 
	$
	G_0\colon \cP(\A)\to \cP(B)
	$
	such that $(a,G_0(a))\in \cY$ for all $a\in \cT$. 
	Let 
	\[
	G(a)=F(a\cup G_0(a))\cap \Phi_*(a). 
	\]
	Then $G$ is the composition of a C-measurable function and a Borel function. Since the preimage of an analytic set by a Borel-measurable function is analytic, the $G$-preimage of every open set  is  C-measurable, and $G$ is therefore C-measurable. Moreover, for every $a\in \cT$ we have that $G_0(a)$ belongs to the comeager set $\cZ(a)$, and therefore $G(a)\Delta \Phi_*(a)\in \cK\ucup\Fin$, as required.  
	
	\eqref{L.Jsigma-to-Jcont.2} Assume that $\cT$ is relatively comeager in some clopen subset of $\cP(A)$. By~\eqref{L.Jsigma-to-Jcont.1},~$\Phi$ has a C-measurable $\cK$-approximation on $\cT$. Since every C-measurable function is Baire-measurable, $\Phi$ has a continuous $\cK^2$-approximation on $\cP(A)$ by Lemma~\ref{L.JcontJ1JsigmaK}. 
\end{proof}

\begin{proof}[Proof of Proposition~\ref{P+.Jsigma-to-Jcont}]We may assume $X=\bbN$. 	Fix a partition $\bbN=\bigsqcup_n A_n$ and Borel-mea\-su\-ra\-ble functions $F_n\colon \cP(\bbN)\to \cP(\bbN)$ whose graphs cover the graph of a $\cK$-ap\-proxi\-ma\-ti\-on to $\Phi$. 
	
	It suffices to prove that the restriction of $\Phi$ to $\cP(A_n)$ has a continuous $\cK^2$-approximation. Assume this is not the case. Since the ideal $\Jcont$ is closed under finite changes of its elements (Lemma~\ref{L.JcontJ1JsigmaK}), this implies that for every $n$ and every nonempty clopen subset $[t]$ of $A_n$, the restriction of $\Phi$  to $[t]\cap \cP(A_n)$ has no continuous $\cK$-measurable approximation. 
		It will be convenient to write 
	\[
	\textstyle	C_n=\bigcup_{j>n}A_n.	
	\]
	We will recursively choose $a_n\subseteq A_n$ and  $\cX_n\subseteq \cP(C_n)$ such that the following conditions hold for all $n$. 
	\begin{enumerate}[label = (\roman*)]
		\item\label{I+.sigma-Borel.1}  $(F_n(\bigcup_{j\leq n} a_j\cup b)\cap \Phi_*(A_n))\Delta  \Phi_*(a_n)\notin \cK\ucup\Fin$ for all $b\in \cX_n$.  
		\item $\cX_n$ is C-measurable and relatively comeager in $[s_n]\cap \cP(C_n)$ for some clopen $[s_n]\subseteq \cP(C_n)$. 
		\item $\{a_n\}\times \cX_{n+1}\subseteq \cX_n$. 
	\end{enumerate}
	We will describe the selection of $a_n$ and $\cX_n$. 
	For $n=0$, our assumption that $\Phi$ has no continuous $\cK^2$-approximation on $\cP(A_0)$ and Lemma~\ref{L+.Jsigma-to-Jcont} together imply that for some $a_0\subseteq A_0$ the set $\cX_0'$ of all $x\subseteq C_0$ such that $F_0(a_0\cup x)\cap \Phi_*(A_0)\Delta \Phi_*(a_0)\notin \cK^2$ is nonmeager. 
	Since this set is, as a preimage of a Borel set by a Borel-measurable function, Borel-measurable, there is a clopen set $[s_0]\subseteq \cP(C_0)$ such that $\cX_0=[s_0]\cap \cX_0'$ is relatively comeager in $[s_0]\cap \cP(C_0)$. 
	
	This describes the construction of $a_0$, $\cX_0$, and $[s_0]$. 
	
	Suppose that $a_n$, $\cX_n$, and $[s_n]$ as required had been chosen. 
	Then we can write $[s_n]=[t_n]\times [u_n]$ for clopen sets $[t_n]\subseteq \cP(A_{n+1})$ and $[u_n]\subseteq \cP(C_{n+1})$. Since $\cX_n$ is relatively comeager in $[s_n]\cap \cP(C_n)$,  by the Kuratowski--Ulam theorem (\cite[\S 8.K]{Ke:Classical}), the set 
	\begin{multline*}
		\cT_n=\{a\in [t_n]\cap \cP(A_{n+1})\mid \text{the set }\{x\subseteq \cP(C_{n+1})\mid  a\cup x\in \cX_n\}\\ \text{ is relatively comeager in $[u_n]\cap \cP(C_{n+1})$}\}
	\end{multline*}
	is relatively comeager in $[t_n]\cap \cP(A_{n+1})$. 
	
	Since the intersection of two comeager sets is comeager, by the fact that $\Phi$ has no continuous $\cK^2$-approximation on $[t_n]\cap \cP(A_n)$, and by applying Lemma~\ref{L+.Jsigma-to-Jcont} to $F_{n+1}$,  $\cT_n$, and $[u_n]$,  we can find $a_{n+1}\subseteq [t_n]\cap \cP(A_{n+1})$ such that 
	\begin{multline*}
		\cX_{n+1}=\{x\in [u_n]\cap \cP(C_{n+1})\mid a_{n+1}\cup  x\in \cX_n,\\ F_{n+1}(a_{n+1}\cup x)\cap \Phi_*(A_{n+1})\Delta  \Phi_*(a_{n+1})\notin \cK\}
	\end{multline*}
	is nonmeager in $\cP(C_{n+1})$. Being C-measurable, $\cX_{n+1}$ is relatively comeager in $\cP(C_{n+1})\cap [s_{n+1}]$ for some relatively clopen $[s_{n+1}]\subseteq [u_n]\cap \cP(C_{n+1})$.  
	
	Then $\{a_{n+1}\}\times \cX_{n+1}\subseteq \cX_n$, and the sets $a_{n+1}$, $\cX_{n+1}$, and $[s_{n+1}]$ satisfy the requirements. 
	
	This describes the recursive construction of $a_n$, for $n\in \bbN$.  Let $a=\bigcup_n a_n$. By the assumption, $F_n(a)\Delta  \Phi_*(a)\in \cK\ucup \Fin$ for some $n$. However, $a\cap A_n=a_n$, hence $(F_n(a)\cap \Phi_*(A_n))\Delta  \Phi_*(a_n)\in \cK\ucup \Fin$,  but this contradicts~\eqref{I+.sigma-Borel.1}. 
	
	Therefore the assumption that the restriction of $\Phi$ to $\cP(A_n)$ does not have a continuous $\cK^2$-approximation is false, and $A_n\in \Jcont^{\cK^2}(\Phi)$ for some $n$.  Since this applies to any sequence of sets whose union is in $\Jsigma^\cK(\Phi)$, we conclude that $A_n\in \Jcont^{\cK^2}(\Phi)$ for all but finitely many $n$. 
\end{proof}

\section{Local liftings}

At the end of this section we will prove the following local version of Theorem~\ref{T.OCA-l}. 

\begin{proposition} 
	\label{P.general.Jcont.nonmeager}
	Assume  $\OCAT$.
	If an ideal $\cI$ is countably $16$-determined   and $\Phi\colon \p(\N)\to \p(\N)/\cI$
	is a  homomorphism, then the ideal $\Jcont(\Phi)$ intersects every perfect tree-like almost disjoint family nontrivially. 
\end{proposition}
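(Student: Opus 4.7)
The plan is to fix a perfect tree-like almost disjoint family $\cA=\cA\{J_s\}=\{A(f):f\in 2^\omega\}$ with pairwise disjoint finite sets $J_s$, a set-theoretic lifting $\Phi_*$ of $\Phi$, and closed hereditary sets $\cK_n\supseteq\cK_{n+1}$ witnessing $\cI=\bigcap_n(\cK_n^{16}\ucup\Fin)$, and to argue by contradiction: assume $A(f)\notin\Jcont(\Phi)$ for every $f\in 2^\omega$. The strategy is to apply $\OCAsharp$ (equivalent to $\OCAT$ by \cite{de2023trivial}) to countable families $\cV_j$ of symmetric open subsets of $[2^\omega]^2$, $j\in\bbN$, where $\cV_j$ enumerates basic open rectangles $[s]\times[t]$ (together with their reflections) of a prescribed form: a finite quadruple $(l,m,u,v)$ with $|s|=|t|=l<m$ and $u,v\subseteq\bigcup_{i<m}(J_{s\rs i}\cup J_{t\rs i})$, such that $\Phi_*(u)\Delta\Phi_*(v)$ restricted to a prescribed finite interval witnesses that no continuous $\cK_j^{4}$-approximation to $\Phi$ can be simultaneously compatible with $\Phi_*$ on $\cP(A(f))$ and $\cP(A(g))$ for $s\sqsubseteq f$ and $t\sqsubseteq g$. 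Openness of $\bigcup\cV_j$ is automatic from the finiteness of the witness, and the nesting $\bigcup\cV_j\supseteq\bigcup\cV_{j+1}$ follows from $\cK_n\supseteq\cK_{n+1}$.

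Alternative~(2) of $\OCAsharp$ would produce an uncountable $Z\subseteq 2^\omega$ and $\rho\colon\Delta(Z)\to\bigcup_j\cV_j$ with $\rho(s)\in\cV_{|s|}$ and $\{a,b\}\in\rho(a\wedge b)$ for all distinct $a,b\in Z$. Fixing $a\in Z$ and selecting $b_k\in Z$ with $|a\wedge b_k|=k$ for each $k$, the witnesses $\rho(a\wedge b_k)\in\cV_k$ supply finite $\cK_k^{4}$-violations that, using the tree-like almost disjoint structure of $\cA$ to arrange them in pairwise disjoint intervals of $\bbN$, assemble into a single set $B\subseteq A(a)$ for which $\Phi_*(B)\Delta\Phi_*(B\cap A(b_k))$ lies outside $\cK_k^{4}\ucup\Fin$ for every $k$; since $A(a)\cap A(b_k)$ is finite and $\Phi$ is a homomorphism, this symmetric difference must lie in $\cI\subseteq\bigcap_j(\cK_j^{16}\ucup\Fin)$, a contradiction. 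This is the essence of ``Biba's trick'' from \cite{de2023trivial}, and I expect this to be the main obstacle --- in particular, the form of the finite witness must be calibrated so that the chain $\rho(a\wedge b_k)$ assembles coherently along a branch of $\cA$ rather than merely witnessing disjoint $\cK_k^{4}$-failures that cannot be glued into a single infinite set.

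Alternative~(1) provides $2^\omega=\bigcup_n X_n$ with $[X_n]^2\cap\bigcup\cV_n=\emptyset$; by Baire category some $X_n$ is nonmeager in a clopen subset of $2^\omega$. The absence of $\cV_n$-witnesses on $X_n$ allows a Jankov--von Neumann-type uniformization (as in the proof of Proposition~\ref{P.Jsigma-to-Jcont}) to produce a Borel-measurable $\cK_n^{4}$-approximation to $\Phi$ on $\cP(A(f))$ for some $f\in X_n$. Lemma~\ref{L.K-baire-to-ctns} together with Corollary~\ref{C.Stabilization} upgrades this to a continuous $\cK_n^{8}$-approximation, giving $A(f)\in\Jcont^{\cK_n^{8}}(\Phi)$. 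Running this argument for each $n$ along a nested sequence of perfect sub-families (obtained via Lemma~\ref{L.perfect.coarsening} at each step) and then applying Lemma~\ref{L.Xstabilizer} to coherently stitch the resulting continuous approximations into a single continuous function that is simultaneously a $\cK_n^{16}$-approximation for every $n$ (the bound $16$ absorbing the $\cK^{10}$ losses of Lemma~\ref{L.Xstabilizer} and the $\cK^{2}$ doublings of Corollary~\ref{C.Stabilization}), yields some $A\in\cA\cap\Jcont^{\cI}(\Phi)=\cA\cap\Jcont(\Phi)$, contradicting the initial assumption.
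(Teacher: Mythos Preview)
Your plan has a structural gap that I do not see how to close: you apply $\OCAsharp$ to the space $2^\omega$ of branches, but the object you ultimately need to produce is a continuous approximation to $\Phi$ on \emph{all} of $\cP(A(f))$ for some $f$. A colouring of pairs of branches carries no information about how $\Phi_*$ behaves on arbitrary subsets $C\subseteq A(f)$, so when you reach alternative~(1) and write ``the absence of $\cV_n$-witnesses on $X_n$ allows a Jankov--von Neumann-type uniformization \dots\ to produce a Borel-measurable $\cK_n^{4}$-approximation to $\Phi$ on $\cP(A(f))$'', there is nothing to uniformize: the relation you would need lives on $\cP(A(f))\times\cP(\bbN)$, and the colouring never touched the first coordinate. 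Your description of $\cV_j$ via quadruples $(l,m,u,v)$ does introduce finite test sets $u,v$, but these appear only as \emph{indices} of the open sets in $\cV_j$, not as coordinates of the points being coloured; knowing that no bad $(l,m,u,v)$ exists for a pair $\{f,g\}$ tells you something about finitely many values of $\Phi_*$, not about a uniformization over $\cP(A(f))$. The same problem afflicts alternative~(2): you promise to ``assemble into a single set $B\subseteq A(a)$'' out of the witnesses, but the witnesses $u,v$ live in $\bigcup_{i<m}(J_{s\rs i}\cup J_{t\rs i})$ and there is no mechanism forcing them to cohere along $A(a)$; you acknowledge this is ``the main obstacle'' but do not indicate how to overcome it.

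The paper's argument avoids this by colouring a richer space. It works with $\cX_\cA=\{(C,B):B\in\hat\cA,\ C\subseteq B\}$, equipped with the topology recording $(C,B,A(B),\Phi_*(C),\Phi_*(B))$, and applies $\OCAinfty$ (not $\OCAsharp$) to the decreasing sequence of open partitions $K_0^n$ defined by: $A(x)\neq A(y)$, $B(x)\cap C(y)=C(x)\cap B(y)$, and $(\Phi_*(B(x))\cap\Phi_*(C(y)))\Delta(\Phi_*(C(x))\cap\Phi_*(B(y)))\notin\cK_n^2$ where $\cK_n=\cK\ucup[\bbN]^n$. The homogeneous-map alternative is ruled out directly by setting $C=\bigcup_{z\in Z}C(f(z))$ and using that $\Phi$ is a homomorphism --- no assembly along a branch is needed. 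In the $\sigma$-homogeneous alternative, a countable dense set $\calD_n\subseteq\cX_n$ together with a branch $\tilde A\in\cA$ avoiding $\{A(d):d\in\bigcup_n\calD_n\}$ lets one define Borel relations $\cZ(n)\subseteq\cP(\tilde A)\times\cP(\bbN)$ whose sections witness $\cK_n^2$-approximation; Jankov--von Neumann then yields $\tilde A\in\Jsigma^{\cK^8}$. The passage from $\Jsigma^{\cK^8}$ to $\Jcont^{\cK^{16}}$ uses the coarsening Lemma~\ref{L.perfect.coarsening} together with Proposition~\ref{P+.Jsigma-to-Jcont}, and the final step from $\bigcap_n\Jcont^{\cK_n^{16}}$ to $\Jcont$ is Lemma~\ref{L.countable-approximation-lifting} (not Lemma~\ref{L.Xstabilizer}, which serves a different purpose). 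The moral is that the subsets $C$ must be built into the points of the coloured space from the start; Biba's trick and $\OCAsharp$ enter the paper only later, in the uniformization step (Proposition~\ref{P.uniformization}), not here.
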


Definition~\ref{Def.PartitionXA} has a long history, starting with \cite{Ve:OCA}. A function $\Phi_*$ will typically be a lifting of a homomorphism from $\cP(\bbN)$ into $\cP(\bbN)/\cI$.

\begin{definition}\label{Def.PartitionXA}
	Suppose that $\cA$ is  a tree-like almost disjoint family. By $\hat \cA$ we denote the hereditary closure of $\cA$ and for an infinite $B\in \hat\cA$ write $A(B)$\Sindex{$A(B)$} for the unique element of $\cA$ that includes $B$. Let 
	\[
	\cX_\cA=\{(C,B)\mid B\in \hat \cA, C\subseteq B\}.
	\] 
	For $x=(C,B)$ in $ \cX_\cA$ we write $C=C(x)$, $B=B(x)$, and $A(B(x))=A(x)$.
	
		If in addition $\Phi_*\colon \cP(\bbN)\to \cP(\bbN)$ and $\cK$ is a closed hereditary subset of $\cP(\bbN)$, then we define a partition\Sindex{$[\cX_\cA]^2=K^{\Phi_*,\cA,\cK}_0\cup K^{\Phi_*,\cA,\cK}_1$}
	\[
	[\cX_\cA]^2=K^{\Phi_*,\cA,\cK}_0\cup K^{\Phi_*,\cA,\cK}_1
	\]
	by setting $\{x,y\}\in K^{\Phi_*,\cA,\cK}_0$ if the following three conditions hold. 
	\begin{enumerate}[label = $K_0$(\roman*)]
	\item \label{K0.1} $A(x)\neq A(y)$.
	\item \label{K0.2}  $B(x)\cap C(y)=C(x)\cap B(y)$.
	\item \label{K0.3}  $(\Phi_*(B(x))\cap \Phi_*(C(y)))\Delta (\Phi_*(C(x))\cap
	\Phi_*(B(y)))\notin \cK^2$.
\end{enumerate}
Endow $\cX_\cA$ with a separable metric topology $\tau^{\Phi_*,\cA}$ by identifying  $x\in \cX_\cA$ with $(C(x), B(x), A(x), \Phi_*(C(x)), \Phi_*(B(x)))$ in the compact metric space $\cP(\bbN)^5$. 
\end{definition}

\begin{lemma}
	Using the notation from Definition~\ref{Def.PartitionXA}, $K^{\Phi_*,\cA,\cK}_0$ is a $\tau^{\Phi_*,\cA}$-open subset of $[\cX_\cA]^2$.  
\end{lemma}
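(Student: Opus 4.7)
The plan is to verify that each of the three defining conditions (i), (ii), (iii) of $K_0:=K_0^{\Phi_*,\cA,\cK}$ cuts out an open subset of $[\cX_\cA]^2$ in the topology $\tau^{\Phi_*,\cA}$; their intersection is then open. Condition (i) is straightforward: distinct points in $\cP(\bbN)$ differ at some coordinate, and pinning that coordinate yields an open neighborhood on which $A(x')\neq A(y')$ persists. For condition (iii), I would first note that $\cK^2$ is the continuous image of the compact set $\cK\times\cK$ under $(A,B)\mapsto A\cup B$, hence closed in $\cP(\bbN)$; since the expression in (iii) depends continuously on the $\Phi_*$-coordinates, its non-membership in $\cK^2$ is an open condition.

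The main work, and the main obstacle, is condition (ii), because set equality is typically closed rather than open. To handle it, I would exploit the tree-like structure of $\cA$. Given $\{x,y\}\in K_0$, let $\sigma=\beta_{A(x)}\cap\beta_{A(y)}$ be the finite common prefix (in tree-order) of the branches of $\cD=\bigcup\cA$ containing $A(x)$ and $A(y)$, and let $d_x,d_y$ be the immediate tree-successors of the top of $\sigma$ along these branches, so $d_x\neq d_y$. Both $B(x)\cap C(y)$ and $C(x)\cap B(y)$ are contained in $A(x)\cap A(y)\subseteq\sigma$, which is finite.

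The decisive step is to choose $N\in\bbN$ large enough that $\sigma\cup\{d_x,d_y\}\subseteq[0,N)$ and that each of $A(x)\cap[0,N)$ and $A(y)\cap[0,N)$ contains an element of tree-depth strictly greater than $|\sigma|$. Such $N$ exists because the infinite sets $A(x)$ and $A(y)$ meet their respective branches at arbitrarily large tree-depths. On the basic neighborhood $U$ of $\{x,y\}$ obtained by fixing all five coordinates of both points on $[0,N)$, this choice forces $\beta_{A(x')}$ to contain $\sigma\cup\{d_x\}$, and $\beta_{A(y')}$ to contain $\sigma\cup\{d_y\}$ (as tree-ancestors of the pinned deep witnesses, using that branches of $\cD$ are downward closed in tree-order). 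Hence the two branches split immediately after $\sigma$, giving $A(x')\cap A(y')\subseteq\sigma\subseteq[0,N)$. This confines both $B(x')\cap C(y')$ and $C(x')\cap B(y')$ to lie in $[0,N)$, where they coincide with $B(x)\cap C(y)$ and $C(x)\cap B(y)$ by the pinned coordinates; condition (ii) for $\{x,y\}$ therefore transfers to $\{x',y'\}$.

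The subtlety is exactly this propagation: naively fixing coordinates on $[0,N)$ matches only the initial segments of the two sides of (ii), but the tree-like hypothesis upgrades this to full equality by forcing $A(x')\cap A(y')$ to remain bounded by $N$ throughout the neighborhood.
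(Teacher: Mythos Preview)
Your proof is correct and follows essentially the same approach as the paper: conditions (i) and (iii) are handled directly, while for condition (ii) you exploit tree-likeness to show that once $A(x)\neq A(y)$ is witnessed, the intersection $A(x')\cap A(y')$ is trapped inside the fixed finite set $\sigma$ of common tree-predecessors, making the equality in (ii) a finitary (hence open) condition. Your treatment is more explicit than the paper's---you spell out the choice of $N$, the deep witnesses in $A(x)$ and $A(y)$, and the downward-closure argument forcing $\beta_{A(x')}\cap\beta_{A(y')}=\sigma$---but the underlying idea is identical to the paper's remark that ``(K2) is open relative to (K1).''
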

\begin{proof}
		 Conditions \eqref{K0.1} and \eqref{K0.3} are clearly open. 
	The symmetric difference of $b'\cap a$ and $b\cap a'$ is included in $B_a\cap B_{a'}$, but since the family $\A_0$ is  tree-like, this is a finite set determined by the witness for (K1). [More precisely, if $m\in B_a\Delta B_{a'}$, then $B_a\cap B_{a'}$ is included in the finite set of points that are below $m$ in the tree ordering on $\bbN$ that witnesses $\cA$ is tree-like.] In other words, \eqref{K0.2} is open relative to \eqref{K0.1}, and this proves that the conjunction of all three conditions defines an open partition. 
\end{proof}

\begin{lemma}\label{Lemma.Jsigma.nonmeager} 
	Assume $\OCAT$, that $\cI$ is an ideal on $\bbN$, $\Phi\colon \cP(\bbN)\to \cP(\bbN)/\cI$ is a homomorphism. 
For every closed approximation $\cK$ to $\cI$, the hereditary set $\Jcont^{\cK^{16}}(\Phi) $
	has nonempty intersection with every perfect tree-like almost disjoint family.
\end{lemma}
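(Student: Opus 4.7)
The plan is to apply an OCA-type axiom to the open partition $K_0^{\Phi_*,\cA,\cK^m}$ on $\cX_\cA$ from Definition~\ref{Def.PartitionXA}, for a modest integer $m$, after fixing a set-theoretic lifting $\Phi_*$ of $\Phi$ and forming the separable metric space $(\cX_\cA,\tau^{\Phi_*,\cA})$. Since $\OCAsharp$ and $\OCAT$ are equivalent, I will freely pass between the two. Either alternative is then treated separately, with the bulk of the work in the $\sigma$-$K_1$-homogeneous case.

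In the first alternative, suppose there is an uncountable $K_0$-homogeneous set $Y\subseteq\cX_\cA$. For distinct $x,y\in Y$, condition \ref{K0.1} gives $A(x)\neq A(y)$, and tree-likeness of $\cA$ makes $B(x)\cap B(y)$ finite; together with \ref{K0.2} this pins $B(x)\cap C(y)=C(x)\cap B(y)$ to be that common finite set. The homomorphism property of $\Phi$ then places the symmetric difference appearing in \ref{K0.3} into $\cI\subseteq\cK\ucup\Fin$. The obstacle here is to upgrade this ``$\in\cK\ucup\Fin$'' conclusion to ``$\in\cK^{m}$'' and thereby contradict \ref{K0.3}. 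I would do so by reframing the colouring through $\OCAsharp$'s decreasing family $\cV_j$ that records the finite offset needed to place the symmetric difference into $\cK$; on an uncountable homogeneous set, a pigeonhole stabilises this offset and delivers the contradiction.

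In the second alternative, write $\cX_\cA=\bigcup_n X_n$ with each $X_n$ being $K_1$-homogeneous. First I coarsen $\cA$ by Lemma~\ref{L.perfect.coarsening} to gain elbow room: every element of the new family contains $2^{\aleph_0}$ elements of the original. A Kuratowski–Ulam argument, combined with the perfectness of the coarsened family and a Baire-category search within $\cX_\cA$, should produce $A\in\cA$ and some $n$ for which $X_n$ meets $\{(C,A)\mid C\subseteq A\}$ in a relatively comeager set. On this comeager slice, the $K_1$-homogeneity of $X_n$ — when paired against points $(C',A')$ with $A'\neq A$ — delivers exactly the pointwise coherence condition used in Lemma~\ref{L+.Jsigma-to-Jcont}: Novikov's theorem together with the Jankov–von Neumann uniformization then yield a C-measurable, hence Baire-measurable, $\cK^{m}$-approximation to $\Phi$ on $\cP(A)$.

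Finally, Corollary~\ref{C.Stabilization} upgrades the Baire-measurable approximation to a continuous one at the cost of squaring $\cK^{m}$, and a further application of Proposition~\ref{P.Jsigma-to-Jcont} to a suitable partition of $A$ yields an infinite $A'\subseteq A$ with $A'\in\Jcont^{\cK^{16}}(\Phi)\cap\hat\cA$, proving the lemma once $m$ is chosen so that the accumulated powers of $\cK$ land at $16$. The main obstacle I anticipate is the construction in the second alternative: extracting from the purely pairwise $K_1$-homogeneity a single Baire-measurable approximation on $\cP(A)$ for one fixed $A\in\cA$. This is precisely the step where ``Biba's trick'' — the refinement of \cite{de2023trivial} that works on one perfect tree-like family at a time rather than across all infinite sets — replaces the older P-ideal density arguments of \cite{Sh:Proper} and \cite{Fa:AQ} and accounts for the claimed simplification of the OCA lifting theorem.
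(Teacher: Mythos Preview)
Your plan for the $\sigma$-$K_1$-homogeneous alternative has a genuine gap. The pieces $X_n$ produced by $\OCAinfty$ carry no definability whatsoever, so no Kuratowski--Ulam or Baire-category search can locate $A\in\cA$ and $n$ with $X_n$ comeager in the fiber $\{(C,A)\mid C\subseteq A\}$; and even waiving measurability, there is no reason a \emph{single} $n$ should capture comeagerly many $C$. The paper never tries to extract one Baire-measurable approximation. Instead it fixes countable $\tau^{\Phi_*,\cA}$-dense $\calD_n\subseteq X_n$, picks $\tilde A\in\cA$ outside $\{A(d)\mid d\in\bigcup_n\calD_n\}$, and splits $\tilde A=B_0\sqcup B_1$ along a recursively defined interval sequence $m(i)$ chosen so that for every $(C,B_j)\in X_n$ and every $m$ one can find $d\in\calD_n$ agreeing with $(C,B_j)$ up to $m$ but with $B(d)$ disjoint from $B_j$ beyond $m$. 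For each $n$ separately this yields a Borel relation $\cZ(n)$, uniformized by Jankov--von Neumann to a C-measurable $\Theta_n$; since every $C\subseteq B_j$ lies in \emph{some} $X_n$, the graphs of the $\Theta_n$ together cover an approximation, placing $B_j\in\Jsigma^{\cK^4}$ and hence $\tilde A\in\Jsigma^{\cK^8}$. Only after this does the coarsening of Lemma~\ref{L.perfect.coarsening} combined with Proposition~\ref{P.Jsigma-to-Jcont} pass from $\Jsigma^{\cK^8}$ to $\Jcont^{\cK^{16}}$ on the original family. Your pipeline omits the $\Jsigma$ step entirely, and Lemma~\ref{L+.Jsigma-to-Jcont} cannot be invoked as you suggest since it requires a single Borel $F$ as input, which you do not have.

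Two smaller points. In the first alternative your pigeonhole is premature: the finite offset placing the symmetric difference into $\cK$ depends on the \emph{pair}, so cannot be stabilized directly on an uncountable set. The paper introduces the separating set $C=\bigcup_{z\in Z}C(f(z))$; the fact that $C\cap B(f(z))=C(f(z))$ for every $z$ converts the pair offset into an element-wise one, after which pigeonhole applies. And Biba's trick is not used in this lemma at all---it appears in the uniformization step (Theorem~\ref{T.OCAsharp-Fin} and Proposition~\ref{P.uniformization}); the present lemma is the \emph{local} step and relies on the older density-in-$K_1$-pieces technique.
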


\begin{proof} Let $\Phi_*$ be a lifting of $\Phi$. 
We will first prove that the hereditary set $\Jsigma^{\cK^4}\ucup \Jsigma^{\cK^4}$
	has nonempty intersection with every perfect tree-like almost disjoint family.
		Fix a perfect tree-like family $\cA$. 
For $n\in \bbN$ let 
\[
\cK_n=\cK\ucup [\bbN]^n. 
\]
Thus $\cI\subseteq \bigcup_n \cK_n$. 	Write $K^n_0=K^{\Phi_*,\cA,\cK_n}_0$ (see Definition~\ref{Def.PartitionXA}). When $\cX_\cA$ is endowed with the topology $\tau^{\Phi_*,\cA}$, this is an open partition.

\begin{claim}
	\label{C.K0-homogeneous} There are no uncountable $Z\subseteq \twoo$ and function $f\colon Z\to \cX_\cA$ such that $\{f(z), f(z')\}\in K^{\Delta(z,z')}_0$ for all distinct $z,z'$ in $Z$. 
\end{claim}

\begin{proof}{}Assume otherwise and fix $Z$ and $f$. By \eqref{K0.1} and \eqref{K0.2},  for all distinct $z$ and $z'$ in $Z$ we have $A(f(z))\neq A(f(z'))$ and \[
	B(f(z))\cap C(f(z'))=C(f(z))\cap B(f(z')).
\] 
	Let 
	$C=\bigcup_{z\in Z} C(f(z))$. Then $C\cap B(f(z))=C(f(z))$ for all $z\in Z$ and since $\Phi_*$ is a lifting of $\Phi$ we have 
	\[
	(\Phi_*(C)\cap\Phi_*(B(f(z))))\Delta \Phi_*(C(f(z)))\in \bigcup_n \cK_n.
	\] 
	Since $Z$ is uncountable, there is $n\in \bbN$ such that the set $Z'$ of all of $z\in Z$ satisfying 	$(\Phi_*(C)\cap\Phi_*(C(f(z))))\Delta \Phi_*(B(f(z)))\in \cK_n$ is uncountable.

This implies that all $z$ and $z'$ in $Z'$ satisfy 
\begin{align*}
	B(f(z))\cap C(f(z'))=^{\cK_n}B(f(z))\cap B(f(z'))\cap C =^{\cK_n} C(f(z))\cap B(f(z'))
\end{align*}
and therefore $B(f(z))\cap C(f(z'))=^{\cK_n^2} C(f(z))\cap B(f(z'))$. 
	Since $Z'$ is uncountable, there are $z$ and $z'$ in $Z'$ such that $\Delta(z,z')>n$. Therefore \eqref{K0.3} fails and  $\{f(z),f(z')\}\notin K_0^{\Delta(z,z')}$; contradiction.  
\end{proof}

Since $\OCAinfty$ is a consequence of $\OCAT$ (\cite[\S 5]{moore2021some}), there are sets $\cX_n$, for $n\in \bbN$, such that $\cX_\cA=\bigcup_n \cX_n$ and $[\cX_n]^2\subseteq K^n_1$ for all $n$.  Let $\calD_n\subseteq \cX_n$ be a countable $\tau^{\Phi_*,\cA}$ dense set. Since $\cA$ is uncountable, we can choose $\tilde A \in \cA\setminus \{A(x)\mid x\in \bigcup_n \calD_n\}$. 
If $m\geq 1$, $\bar x=(x_0,\dots x_{m-1})$ and $\bar y=(y_0,\dots y_{m-1})$ belong to $\cPN^m$,  and $k\in \bbN$, then we write\footnote{The relations $=^k$ for $k\in \bbN$ and $=^{\cK}$ for a closed hereditary $\cK$ should not be confused.}  
\[
\bar x=^k \bar y\text{ if and only if $x_i\cap k=y_i\cap k$ for all $i<m$.}
\]
For $m\in \bbN$ define 
\begin{align*}
m^+=&\min\{l>m\mid (\forall n\leq m) (\forall x\in \cX_n)A(x)=\tilde A\quad\Rightarrow \quad(\exists d\in \calD_n) B(d)\cap \tilde A\subseteq l\text { and}\\
&
(C(d), B(d), \Phi_*(C(d)), \Phi_*(B(d)))=^{m} 
(C(x), B(x), \Phi_*(C(x)), \Phi_*(B(x)))
\}. 
\end{align*}
Because $\calD_n$ is dense in $\X_n$ for every $n$, $\tilde A\neq D(x)$ for all $x\in \bigcup_n \calD_n$, and $\cP(m)^4$ is finite, 
 $m^+$ is finite for every $m$. 
Recursively define $m(j)$ for $j\in \bbN$ by 
\[
m(0)=0\text{ and }m(i+1)=m(i)^+\text{ for all $i$}.
\] 
This is a strictly increasing sequence, 
and  we let  
\[
B_0=\bigcup_i [m(2i), m(2i+1))\cap \tilde A, \qquad B_1=\tilde A\setminus B_0 
\]
so that $B_0\sqcup B_1=\tilde A$. 
We will prove that $B_j\in\Jsigma^{\cK^2}(\Phi)$ for $j=0,1$. 

For each $n$ let 
\begin{align*}
\cZ(n)=\{&(X,Y)\mid X\subseteq B_0, Y\subseteq \bbN, (\forall j\geq n) (\exists d\in \calD_n) B(d)\cap \tilde A\subseteq m(2j+2)\text{ and } \\
&(C(d), B(d), \Phi_*(C(d)), \Phi_*(B(d))=^{m(2j+1)} 
(X, B_0, Y, \Phi_*(B_0))\}.
\end{align*}

\begin{claim}\label{C.Zn}
	Suppose that $x\in \cX_n$ and $B(x)=B_0$. Then the following holds. 
	\begin{enumerate}
		\item $(C(x),\Phi_*(C(x)))\in \cZ(n)$. 
		\item If $(C(x),Y)\in \cZ(n)$ then $\Phi_*(B_0)\cap Y=^{\cK_n^2} \Phi_*(C(x))\cap \Phi_*(B_0)$. 
	\end{enumerate}
		\end{claim}

\begin{proof} If $x\in \cX_n$, $B(x)=B_0$,  and $m(2j+1)\geq n$  then since $\calD_n$ is $\tau^{\cA,\Phi_*}$-dense in $\cX_n$,  some $d\in \calD_n$ satisfies 	$B(d)\cap \tilde A \subseteq  m(2j+2)$ and 
	\[
	(C(d), B(d), \Phi_*(C(d)), \Phi_*(B(d))=^{m(2j+1)} 
	(C(x), B_0, \Phi_*(C(X)), \Phi_*(B_0). 
	\]  
  Since $j\geq n$ was arbitrary, $(C(x), \Phi_*(C(x))\in \cZ(n)$ follows. 

To prove the second part of the claim, towards contradiction suppose that $x$ is in~$\cX_n$, $B(x)=B_0$, $Y\subseteq \bbN$,  $(C(x),Y)\in \cZ(n)$, but 
\[
(\Phi_*(B_0)\cap Y) \Delta (\Phi_*(C(x))\cap \Phi_*(B_0))\notin {\cK_n^2} .
\]
Since $\cK_n$ is closed, there is $j\geq n$ large enough to have 
\begin{equation}\label{eq.notinKn}
(( \Phi_*(B_0)\cap Y) \Delta (\Phi_*(C(x))\cap \Phi_*(B_0)))\cap m(2j+1)\notin {\cK_n^2} .
\end{equation}
Since $(C(x), Y)\in \cZ_n$, some $d\in \calD_n$ satisfies
$B(d)\cap \tilde A\subseteq m(2j+2)$ and   \begin{equation}\label{eq.Y=Phi*(C(d))}
(C(d), B(d), \Phi_*(C(d)), \Phi_*(B(d)))=^{m(2j+1)} 
(C(x), B_0, Y, \Phi_*(B_0)).
\end{equation}
Since $B_0$ is disjoint from $[m(2j+1), m(2j+2))$, $B_0\cap C(d)=C(x)\cap B(d)$. 
As $\{x,d\}\in K^n_1$ and  $A(d)\neq A(x)$, we have 
\[
(\Phi_*(B_0)\cap \Phi_*(C(d)))\Delta (\Phi_*(C(x))\cap
\Phi_*(B(d)))\in \cK^2.
\]
Together with \eqref{eq.Y=Phi*(C(d))} this implies 
\[
((\Phi_*(B_0)\cap Y)\Delta (\Phi_*(C(x))\cap
\Phi_*(B_0)))\cap m(2j+1)\in \cK^2, 
\]
contradicting \eqref{eq.notinKn}. 
\end{proof}

We claim that each  $\cZ(n)$ is Borel. For a fixed $d\in\calD_n$  and $j\in \bbN$ the set  
\begin{align*}
\cZ(n,d,j)=
\{&(X,Y)\mid X\subseteq B_0, Y\subseteq \bbN, B(d)\cap \tilde A\subseteq m(2j+2)\text{ and }\\
&(C(d), B(d), \Phi_*(C(d)), \Phi_*(B(d)))=^{m(2j+1)} 
(X, B_0, Y, \Phi_*(B_0))\}
\end{align*}
is closed. Thus $\cZ(n)=\bigcap_j \bigcup_{d\in \calD_n} \cZ(n,d,j)$ is an $F_{\sigma\delta}$ set. By the Jankov, von Neumann theorem (\cite[18.A]{Ke:Classical}) there is a C-measurable function $\Theta_n$ whose domain includes  $\{C(x)\mid x\in \cX_n, B(x)=B_0\}$ such that  $(X,\Theta_n(X))\in \cZ(n)$ for all $X$ in the domain of $\Theta_n$.  
Let $\tilde \Theta_n(\cdot)=\Theta_n(\cdot)\cap \Phi_*(B_0)$.  
Since $\cX_\cA=\bigcup_n \cX_n$, Claim~\ref{C.Zn} implies that for every $X\subseteq B_0$ there is~$n$ such that $\Phi_*(X)=^{\cK_n^2} \tilde \Theta_n(X)$. Therefore $B_0$ belongs to $\Jsigma^{\cK^2}(\Phi)$, as required.  
By Lemma~\ref{L.K-baire-to-ctns}, $B_0$ is in $\Jsigma^{\cK^4}$. 

Analogous argument shows that $B_1\in \Jsigma^{\cK^4}$ and therefore $\tilde A\in \Jsigma^{\cK^4}\ucup \Jsigma^{\cK^4}$, as promised. By Lemma~\ref{L.JcontJ1JsigmaK}, $\tilde A\in \Jsigma^{\cK^8}$. 
 Since $\tilde A\in \cA\setminus \{A(d)\mid d\in \bigcup_n \calD_n\}$ was arbitrary, for every uncountable tree-like almost disjoint family $\cA$ all but countably many elements of $\cA$ belong to $\Jsigma^{\cK^8}$. 

We proceed to complete the proof of Lemma~\ref{Lemma.Jsigma.nonmeager}.  Fix a perfect tree-like almost disjoint family $\cB$. By Lemma~\ref{L.perfect.coarsening},  there is a perfect tree-like almost disjoint family~$\cA$ such that every $A\in \cA$ includes infinitely many elements of $\cB$. 
 By the first part of the proof,  all but countably many elements of $\cA$  belong to $\Jsigma^{\cK^8}$. 
 Every such element includes infinitely many disjoint elements of $\cB$, and by Proposition~\ref{P+.Jsigma-to-Jcont} all but finitely many of them belong to $\Jsigma^{\cK^{16}}$. 	Therefore $\Jsigma^{\cK^{16}}$  intersects every perfect tree-like almost disjoint family nontrivially, as required. 
 \end{proof}


\begin{proof}[Proof of Proposition~\ref{P.general.Jcont.nonmeager} for $F_\sigma$ ideals]
	Suppose $\Phi\colon \cP(\bbN)\to \cP(\bbN)/\cI$ is a homomorphism for an $F_\sigma$ ideal $\cI$. 
By Lemma~\ref{L.SolMa}, there is a closed approximation  $\cK$ to $\cI$ such that $\cI=\cK\ucup \Fin$. Also, $\cK^{16}\subseteq \cI$ hence $\cI=\cK^{16}\ucup \Fin$, thus  $\Phi$ and $\cK$ satisfy the assumptions of  Lemma~\ref{Lemma.Jsigma.nonmeager}.  Therefore the ideal $\Jcont^{\cK^{16}}(\Phi)$ includes $\Jcont(\Phi)$, and $\Jcont(\Phi)$ intersects every perfect  tree-like almost disjoint family, as required. 
\end{proof}

In order to extend the conclusion of Proposition~\ref{P.general.Jcont.nonmeager}  to countably determined ideals we need the following lemma.

\begin{lemma}\label{L.countable-approximation-lifting}
	Suppose that ideal $\cI$ is an intersection of a sequence of Borel sets, $\cI=\bigcap_n \cB_n$.  Then for every homomorphism $\Phi\colon \cP(\bbN)\to \cP(\bbN)/\cI$ the following are equivalent. 
	\begin{enumerate}
		\item $\Phi$ has a continuous lifting. 
		\item $\Phi$ has a Borel-measurable $\cB_n$-approximation $\Theta_n$ for every $n$. 
	\end{enumerate}
\end{lemma}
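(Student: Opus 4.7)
The direction $(1) \Rightarrow (2)$ is immediate: a continuous lifting $\Theta$ is Borel-measurable, and $\Theta(A) \Delta \Phi_*(A) \in \cI \subseteq \cB_n$ for every $A$ and every $n$, so $\Theta$ itself serves as the required $\cB_n$-approximation for every $n$.

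For $(2) \Rightarrow (1)$, the plan is to combine the given $\Theta_n$ into a single Baire-measurable lifting by measurable uniformization, and then apply Corollary~\ref{C.Stabilization} to upgrade this lifting to a continuous one. Consider the relation
\[
\mathcal{R} = \bigcap_n \{(A, B) \in \cP(\bbN) \times \cP(\bbN) \mid B \Delta \Theta_n(A) \in \cB_n \ucup \Fin\}.
\]
Each set in this intersection is the preimage of the Borel set $\cB_n \ucup \Fin$ under the Borel-measurable map $(A, B) \mapsto B \Delta \Theta_n(A)$, so $\mathcal{R}$ is Borel. Every section $\mathcal{R}_A$ contains $\Phi_*(A)$ by the defining property of the $\Theta_n$, so is non-empty. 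The Jankov, von Neumann uniformization theorem (see \cite[18.A]{Ke:Classical}) then yields a C-measurable (hence Baire-measurable) function $\Theta\colon \cP(\bbN) \to \cP(\bbN)$ with $(A, \Theta(A)) \in \mathcal{R}$ for every $A$.

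For every $A$ and every $n$, the inclusion $\Theta(A) \Delta \Phi_*(A) \subseteq (\Theta(A) \Delta \Theta_n(A)) \cup (\Theta_n(A) \Delta \Phi_*(A))$ gives $\Theta(A) \Delta \Phi_*(A) \in \cB_n^2 \ucup \Fin$. Under the (mild) hypothesis $\bigcap_n (\cB_n^2 \ucup \Fin) = \cI$ --- automatic in the intended applications, where $\cB_n = \cK_n \ucup \Fin$ for closed hereditary $\cK_n$ witnessing strong countable determination of $\cI$, or else arranged by passing from the original $\cB_n$ to the sequence $\cB_n^2 \ucup \Fin$ --- the function $\Theta$ is a Baire-measurable $\cI$-approximation, i.e., a Baire-measurable lifting of $\Phi$. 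Finally, Corollary~\ref{C.Stabilization} applied with $\calH = \cP(\bbN)$ and $\cK = \cI$ upgrades $\Theta$ to a continuous $\cI \ucup \cI = \cI$-approximation, which is the desired continuous lifting.

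The main obstacle is the factor-of-two that appears when combining two $\cB_n$-sized errors: uniformization naturally produces $\cB_n^2 \ucup \Fin$-control on $\Theta(A) \Delta \Phi_*(A)$, whereas what we want is $\cI$-control. In applications this is transparent from the structure of the $\cB_n$, but at the level of generality of the statement one must verify or arrange $\cI = \bigcap_n (\cB_n^2 \ucup \Fin)$ before the final Corollary~\ref{C.Stabilization} step can be invoked.
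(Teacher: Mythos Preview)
Your approach is essentially identical to the paper's: define the Borel relation $\{(a,b)\mid \Theta_n(a)\Delta b\in \cB_n\ucup\Fin\text{ for all }n\}$, apply Jankov--von Neumann uniformization to obtain a C-measurable selector, and finish with Corollary~\ref{C.Stabilization}. You are in fact more careful than the paper, which passes directly from $(a,\Theta(a))\in\cX$ to ``$\Theta$ is a lifting'' without mentioning the factor-of-two you flag; as you note, in the intended applications (where $\cB_n=\cK_n^d\ucup\Fin$ for approximations witnessing countable $d$-determination) this is absorbed automatically.
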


\begin{proof}Only the converse implication requires a proof. 
	Let 
	\[
	\cX=\{(a,b)\mid \Theta_n(a)\Delta b\in \cB_n\ucup \Fin\text{for all $n$}\}.
	\]
	Since all $\cB_n$ and $\Theta_n$ are Borel, so is $\cX$. By the 
	Jankov, von Neumann uniformization  
	theorem (\cite[18.A]{Ke:Classical})
	there is a
	C-measurable function $\Theta\colon \p(B)\to \p(\N)$ such that 
	$(a,\Theta(a))\in \cX$ for all $a$, and   therefore $\Theta$ is a
	lifting of a 
	homomorphism $\Phi$ on $\p(B)$. 
	By Corollary~\ref{C.Stabilization}, $\Phi$ has a continuous lifting. 
\end{proof}

\begin{proof}[Proof of Proposition~\ref{P.general.Jcont.nonmeager}, the general case] Fix a perfect tree-like almost disjoint family $\cA$.
Let $\cK_n$, for $n\in \bbN$, be closed approximations to $\cI$ such that $\cI=\bigcap_n (\cK_n^{16}\ucup \Fin)$. Lemma~\ref{Lemma.Jsigma.nonmeager} implies that $\Jcont^{\cK_n^{16}}$ intersects $\cA$ nontrivially for every $n$. This implies that $\Jcont^{\cK_n^{16}}$ contains all but countably many elements of $\cA$.   Then all but countably many elements of $\cA$ belong to $\bigcap_n \Jcont^{\cK_n^{16}}$. By Corollary~\ref{C.Stabilization}, each of these elements belongs to $\Jcont$.  
\end{proof}

\section{Uniformization modulo a countable ideal}\label{S.UniformizationFin}
The case when $\cI=\Fin$ of Theorem~\ref{T.OCAsharp-Fin} below was proven from $\OCAT$ and $\MA$ in \cite{Ve:OCA} and from $\OCAT$ in \cite[Theorem~3.3]{de2023trivial}, and the general case was proven from $\OCAT$ and $\MA$ in \cite[Theorem~1.9.2]{Fa:AQ}.  The proof for $\Fin$ given here is slightly shorter than the proof in \cite{de2023trivial}. It uses `Biba’s trick’, first used in the proof of \cite[Proposition~5.6]{de2023trivial}. A more sophisticated (read: more complicated) use of Biba’s trick can be seen in the proof of Proposition~\ref{P.uniformization}  .

\begin{theorem} \label{T.OCAsharp-Fin}
	Assume  $\OCAT$.  If $\cI$ is a countably generated ideal on $\bbN$ and 
	$\Phi\colon \p(\N)\to \p(\N)/\cI$
	is a  homomorphism, then $\Phi$ has a continuous lifting on the nonmeager ideal $\Jcont$.  
\end{theorem}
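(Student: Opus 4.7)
The plan is to proceed in three stages: first, verify that $\Jcont(\Phi)$ is nonmeager; second, fix a continuous lifting $\Theta_A$ on $\cP(A)$ for each $A\in\Jcont(\Phi)$ and package them into a separable metric parameter space; third, apply $\OCAsharp$ (equivalent to $\OCAT$) in the style of Biba's trick to promote local existence to a single continuous $F\colon\cPN\to\cPN$ lifting $\Phi$ on every element of $\Jcont(\Phi)$.

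Every countably generated ideal is $F_\sigma$: for an increasing generator $(Y_n)_n$, one has $\cI=\bigcup_{n,k}\{A\mid A\cap[k,\infty)\subseteq Y_n\}$, a countable union of closed sets. By Lemma~\ref{L.StronglyCountablyDetermined}, $\cI$ is then strongly countably determined by closed approximations and admits a closed approximation $\cK$ with $\cI=\cK\ucup\Fin$, so Proposition~\ref{P.general.Jcont.nonmeager} applies and $\Jcont(\Phi)$ meets every perfect tree-like almost disjoint family. Were $\Jcont(\Phi)$ meager, Theorem~\ref{T.HNM} would yield pairwise disjoint finite sets $(s_i)$ such that $\bigcup_{i\in a}s_i\notin\Jcont(\Phi)$ for every infinite $a\subseteq\bbN$, and the perfect tree-like family $\cA\{s_i\}$ of Example~\ref{ex.perfect} would then be disjoint from $\Jcont(\Phi)$ — a contradiction. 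Hence $\Jcont(\Phi)$ is nonmeager.

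For each $A\in\Jcont(\Phi)$ fix a continuous lifting $\Theta_A$ of $\Phi$ on $\cP(A)$, and consider the separable metric space $X$ of coded pairs $(A,\Theta_A)$, embedded in $\cP(\bbN)\times\cP(\bbN)^{\cP(\bbN)}$ via any Polish encoding of the continuous map $\Theta_A$. Equip $X$ with the decreasing sequence of symmetric open sets
\[
\cV_n=\bigl\{\{(A,\Theta_A),(B,\Theta_B)\}\mid\exists X\subseteq A\cap B,\ (\Theta_A(X)\Delta\Theta_B(X))\setminus n\notin\cK^{10}\bigr\},
\]
which are open by closedness of $\cK$ and continuity of the liftings. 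Apply $\OCAsharp$. In the first alternative, $X=\bigcup_n X_n$ with $[X_n]^2\cap\bigcup\cV_n=\emptyset$, i.e., all liftings coded inside $X_n$ pairwise $\cK^{10}$-agree beyond $n$. Picking a countable dense $\calD_n\subseteq X_n$ and combining representative liftings by a diagonal pasting along a partition of $\bbN$ — invoking Corollary~\ref{C.Stabilization} and the coherence statement of Lemma~\ref{P.Theta0Theta1} at each stage — produces a single continuous $F\colon\cPN\to\cPN$ which is a $\cK^{10}$-approximation, and therefore a lifting modulo $\cI$, of $\Phi$ on every $A\in\Jcont(\Phi)$.

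The second alternative is where Biba's trick enters: it would produce uncountable $Z\subseteq\twoo$, an injection $a\mapsto(A_a,\Theta_a)$, and $\rho\colon\Delta(Z)\to\bigcup\cV_n$ with $\rho(s)\in\cV_{|s|}$, hence for every $s=a\wedge b$ an explicit witness set $X(s)\subseteq A_a\cap A_b$ of $\cK^{10}$-disagreement outside $[0,|s|)$. The plan is to use the tree structure to thin $Z$ so that $(A_a)$ becomes almost disjoint in a controlled way, then amalgamate the witnesses $X(s)$ along a single carefully chosen branch into a concrete set $C$ whose $\Phi$-image simultaneously forces the local liftings $\Theta_a$ and $\Theta_b$ at a single pair $a,b\in Z$ to violate a Xstabilizer-style bound (in the spirit of Lemma~\ref{L.Xstabilizer}), contradicting the homomorphism property of $\Phi$. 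The main obstacle — and the exact reason $\OCAsharp$ rather than plain $\OCAT$ is indispensable here — is that the bad alternative supplies only \emph{tree-indexed} disagreements; collapsing these into a single pairwise incoherence requires calibrating the $\cV_n$ so that the error tolerance shrinks along the tree and then exploiting both the perfect tree-like combinatorics and the countably generated structure of $\cI$ (through the closed approximation $\cK$) so that a diagonal branch concentrates the disagreements on a single set. This collapse of a tree of disagreements into a concrete contradiction is precisely Biba's trick.
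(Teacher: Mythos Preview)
Your first stage (nonmeagerness of $\Jcont$) is fine and matches the paper. After that, there are two genuine gaps.

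\textbf{You have misplaced Biba's trick and misdescribed the elimination of the second $\OCAsharp$ alternative.} In the paper, the second alternative (uncountable $Z$, $f$, $\rho$) is ruled out \emph{before} Biba's trick, not by it. The argument is: from the finite disagreement witnesses attached to the nodes of $\Delta(Z)$ one extracts pairwise disjoint finite sets (this uses a purely combinatorial lemma, \cite[Lemma~3.8]{de2023trivial}), builds from them a perfect tree-like almost disjoint family $\{D(g)\}_{g\in\twoo}$, and then invokes Proposition~\ref{P.general.Jcont.nonmeager} to put some $D(g)$ into $\Jcont$. The contradiction then comes from a pointwise coherence property of the local liftings (Claim~1 in the paper's proof), not from any ``Xstabilizer-style bound'' on a single amalgamated set. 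Your plan to ``thin $Z$ so that $(A_a)$ becomes almost disjoint'' is essentially what Lemma~\ref{L.MAsl.thin} does---and that lemma uses $\MAsigma$, which you do not have. The paper avoids $\MAsigma$ here precisely by passing, for $\cI=\Fin$, from continuous liftings $\Theta_A$ to \emph{completely additive} liftings $X\mapsto h_A^{-1}(X)$ (via \cite{Ve:Definable} and \cite[Theorem~1.6.1]{Fa:AQ}); working with the functions $h_A$ is what makes the combinatorics of the second alternative, and the subsequent extraction of a single $h$, tractable without $\MAsigma$. Your proposal never makes this reduction.

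\textbf{Your treatment of the first alternative is where Biba's trick actually lives, and it is not a gluing argument.} Once $\Jcont=\bigcup_n\cX_n$ with $[\cX_n]^2\cap\bigcup\cV_n=\emptyset$, the paper fixes a nonmeager $\cX_n$ and attempts to recursively build two nonmeager towers $\cF_{0,m},\cF_{1,m}\subseteq\cX_n$ together with points $n_i$ and values $k_i\neq l_i$ witnessing persistent disagreement of the $h_A$'s; homogeneity forces this recursion to halt, and its failure at stage $m$ pins down a single function $h$ (defined pointwise by the unique nonmeager value). That is Biba's trick. Your ``diagonal pasting along a partition'' with Corollary~\ref{C.Stabilization} and Lemma~\ref{P.Theta0Theta1} does not produce a single lifting on all of $\Jcont$: those lemmas stabilize a \emph{fixed pair} of liftings on a comeager set, but give no mechanism for uniformizing an uncountable coherent family. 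Finally, the general countably generated case is not covered by the $\Fin$ argument; the paper reduces to $\cI=\FinO$ and runs a second $\OCAT$ argument over the directed system $\{\Gamma_\alpha:\alpha\in\bbNN\}$ to patch the resulting $h_\alpha$'s into a single $h$. Your proposal does not address this reduction.
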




\begin{proof}[Proof of Theorem~\ref{T.OCAsharp-Fin}]  By Theorem~\ref{T.HNM}, if $\Jcont$ were meager then there would be a perfect almost disjoint family disjoint from $\Jcont$. 
	However,  $\Jcont$ intersects every perfect  tree-like almost disjoint family by  Proposition~\ref{P.general.Jcont.nonmeager} and is therefore nonmeager.  If $X=\{n\mid \{n\}\notin \cI\}$, then the homomorphism $A\mapsto \Phi(A)\cap X$ is already completely additive. We may therefore assume $\cI\supseteq \Fin$. 
		We will first prove the theorem in the case when $\cI=\Fin$. 
	
	Fix a homomorphism $\Phi\colon \cP(\bbN)\to \cP(\bbN)/\Fin$. 
	For each $A\in  \Jcont(\Phi)$ the restriction of $\Phi$ to $\cP(A)$ has a continuous lifting, and by \cite{Ve:Definable} (as extended in \cite[Theorem~1.6.1]{Fa:AQ}) has a completely additive lifting. This lifting is of the form 
	$B\mapsto h_A^{-1}(B)$ for a function  
	\[
	h_A\colon \Phi_*(A)\to A. 
	\]
	We fix these functions for a moment, to start with, but reserve all rights to modify then and re-evaluate the open partitions used in the proof as convenient.

	The first step will be to verify that the family $h_A$, for $A\in  \Jcont$, has an appropriate coherence property .

	\begin{claim}\label{L.hA.coherent}
		For all $A$ and $B$ in $\Jcont$ the set 
		\[
		\Diff(A,B)=\{n\in \Phi_*(A)\cap\Phi_*(B), h_A(n)\neq h_B(n)\}
		\]
		is finite. 
	\end{claim}

	\begin{proof}  Assume otherwise and fix $A$ and $B$ such that $C=\Diff(A,B)$ is infinite. Define $c\colon [C]^2\to \{0,1,2\}$ as follows. 
		\[
		c(\{m,n\}_<)=
		\begin{cases}
			0& \text { if } h_A(m)=h_B(n)\\
			1& \text { if } h_B(m)=h_A(n)\\	
			2& \text { if } h_A(m)\neq h_B(n)\text{ and } h_B(m)\neq h_A(n).
		\end{cases}
		\]
		Let $C_0\subseteq C$ be an infinite homogeneous set. If it is $2$-homogeneous then $A_0=h_A[C_0]$ and $B_0=h^B[C_0]$ are disjoint sets such that $\Phi_*(A_0)\cap \Phi_*(B_0)$ is infinite\footnote{Note that we are not assuming $\ker(\Phi)\supseteq \Fin$.} (hence $\Fin$-positive); contradiction. 
		
		If $C_0$ is $0$-homogeneous, then for all $m< m'<n$ in $C_0$ we have $h_B(m')=h_A(m)=h_B(n)=h_A(m')$, contradicting the choice of $C$. The case when $C_0$ is $1$-homogeneous leads to a contradiction by an analogous argument. 
	\end{proof}

	Extend each $h_A$ to a function $h_A^+$ from $\bbN$ into $\bbN_*=\bbN\cup \{\infty\}$ by $h_A^+(m)=\infty$ for $m\notin \Phi_*(A)$. 
	Identify $A\in \Jcont$ with the pair $(A,h_A^+)\in\Jcont\times {\bbN_*}^\bbN$. The right-hand side is a subspace of the Polish space $\cP(\bbN)\times {\bbN_*}^\bbN$ and we use this identification to equip $\Jcont$ with a separable metric topology $\tau$.

	For $t\Subset \bbN$ and $A,B$ in $\Jcont$ we say that $A$ and $B$ \emph{conflict on $t$} if $\Diff(A,B)\supseteq t$. For a fixed $t\Subset \bbN$, the set $U_t$ of pairs $\{A,B\}$ that conflict on $t$ is a symmetric $\tau$-open subset of $[\Jcont]^2$. 
	
This is a great moment to take a glance at the statement of $\OCAsharp$ (Definition~\ref{Def.OCAsharp}).
	For $m\geq 1$ let 
	\[
	\cV_m=\{U_s\mid s\in [\bbN]^{n}, \text{ with } n=m+(4^{m+1}-1)/3\}.
	\]  
	Since $t\subseteq t'$ implies $U_t\supseteq U_{t'}$, we have $\cV_m\supseteq \cV_{m+1}$ for all $m$.

	\begin{claim}\label{C.no-triple}
		There is no triple $(Z,f,\rho)$ with $Z\subseteq \twoo$ uncountable, $f\colon Z\to \Jcont^\cK$,  and $\rho\colon \Delta(Z)\to \bigcup_m \cV_m$ such that $\rho(s)\in \cV_{|s|}$ for all $s$ and 
		\(
		\{f(x),f(y)\}\in \rho(x\wedge y)
		\)
		for all distinct $x,y$ in $Z$. 
	\end{claim}
	\begin{proof}
		Assume otherwise and fix $Z$, $f$, and $\rho$.  We may assume that $Z$ has no isolated points, in which case $(\Delta(Z),\sqsubseteq)$ is a perfect tree. (It is not necessarily downwards closed in $\twolo$.) If $\rho(s)=U$ then $U=U_t$ for some $t$ of cardinality $|s|+(4^{|s|+1}-1)/3$; we write $A(s)=t$. A simple combinatorial argument  (\cite[Lemma~3.8]{de2023trivial})  implies that there are pairwise disjoint  $B(s)\subseteq A(s)\subseteq |s|$ such that $|B(s)|=2^{|s|}$ for all $s\in \Delta(Z)$.  Let $\cS_m$ denote the $m$-th level of $\Delta(Z)$ and note that $s\in \cS_m$  implies $|B(s)|\geq 2^m$. There are therefore disjoint sets $J_t$, for $t\in \{0,1\}^m$, such that $J_t\cap B(s)\neq \emptyset$ for all $s\in \cS_m$  and $\bigcup\{B(s)\mid s\in \cS_m\}=\bigcup\{J_t\mid t\in \{0,1\}^m\}$. For $g\in \twoo$ let 
		\[
		D(g)=\bigcup_n J_{g\rs n}.  
		\]
		Since the finite sets $J_t$ are nonempty and disjoint, $\{D(g)\}$ is a perfect tree-like almost disjoint family. Therefore $\OCAT$ and Proposition~\ref{P.general.Jcont.nonmeager} together imply that $D(g)\in \Jcont$ for some $g$. The salient property of $D(g)$ is that $D(g)\cap A(s)\neq \emptyset$ for all $s\in \Delta(Z)$. By Claim~\ref{L.hA.coherent}, for every $x\in Z$ there is $n(x)$ such that all $j\in (f(x)\cap D(g))\setminus n(x)$  satisfy $h_{D(g)}(j)=h_{f(x)}(j)$. 
		
		Fix $n$ such that $Z'=\{x\in Z\mid n(x)=n\}$ is uncountable. As the sets $A(s)$ are nonempty and disjoint, the   set $\{s\in \Delta(Z)\mid A(s)\cap n\neq \emptyset\}$ is finite.  Hence we can  choose distinct $x$ and $y$ in $Z'$ such that $A(x\wedge y)\cap n=\emptyset$.  
		Therefore $D(g)\cap A(x\wedge y)\neq \emptyset$, and $f(x)$ and $f(y)$ conflict on $D(g)\cap A(x\wedge y)\neq \emptyset$. However, each one of $h_{f(x)}$ and $h_{f(y)}$ agrees with $h_{D(g)}$ on this set; contradiction. 
	\end{proof}
	
	Since $\OCAsharp$ is a consequence of $\OCAT$ (\cite[Theorem~3.3]{de2023trivial}), by Claim~\ref{C.no-triple} and $\OCAsharp$ there are $\cX_n$, for $n\in \bbN$, such that $\Jcont=\bigcup_n \cX_n$ and $[\cX_n]^2\cap \cV_n=\emptyset$ for all $n$. 
	Since $\Jcont(\Phi)$ is nonmeager,  we can fix $n$ such that 
	$\cX_n$ is nonmeager. 
	
	Next, we attempt to recursively choose an increasing sequence $n_i$ and $k_i\neq l_i$ for $i\in \bbN$ such that the following holds for all $m$.\footnote{The remaining  part of the proof  is Biba's trick.} 
	\begin{enumerate}
		\item The set $\cF_{0,m}=\{A\in \cX_n\mid h_A(n_i)=k_i$ for all $i<m\}$ is nonmeager. 
		\item The set $\cF_{1,m}=\{B\in \cX_n\mid h_B(n_i)=l_i$ for all $i<m\}$ is nonmeager.
	\end{enumerate} 
	Since $[\cX_n]^2\cap \bigcup\cV_n=\emptyset$, a recursive construction of such sequences has to stop at a finite stage (more precisely, before the $n+(4^{n+1}-1)/3$-th stage). We therefore have $m$ (possibly $m=0$, with $n_{-1}=0$), $n_i,k_i,l_i$, for $i<m$ such that for all $n>n_{m-1}$ and all $k\neq l$ at least one of the sets
	\[
	\{A\in \cF_{0,m}\mid h_A(n)=k \}
	\text{
		or
	}
	\{B\in \cF_{1,m}\mid h_B(n)=l \}
	\]
	is meager. 
	
	Let $D$ be the set of $n>n_{m-1}$ 
	such that both $\cF_0=\{A\in \cF_{0,m}\mid n\in \Phi_*(A)\}$ and $\cF_1=\{B\in \cF_{1,m}\mid n\in \Phi_*(B)\}$ are nonmeager. Fix $n\in D$. If $k$ and $l$ are such that $\{A\in \cF_0\mid h_A(n)=k\}$ and $\{B\in \cF_1\mid h_B(n)=l\}$ are nonmeager, then we have $k=l$ (otherwise we would have set $n_m=n$, $k_m=k$, and $l_m=l$). Therefore there is $k$ such that $h_A(n)=k=h_B(n)$ for a relatively  comeager (in $\cF_0$) set of $A\in \cF_0$ and a relatively  comeager (in $\cF_1$) set of $B\in \cF_1$. 
	
	Define $h\colon D\to \bbN$ by letting $h(n)=k$,  for $k$ as in the previous line. 
	
		\begin{claim}\label{L.hA.coherent.plus}
		For all $A$ in $\Jcont$ the set 
		\[
		\Diff(A,h)=\{n\in \Phi_*(A)\cap D\mid h_A(n)\neq h(n)\}
		\]
		is finite. 
	\end{claim}

	\begin{proof}  The proof is similar to the proof of Claim~\ref{L.hA.coherent}. Assume otherwise and fix~$A$ such that $C=\Diff(A,h)$ is infinite. Defining $c\colon [D]^2\to \{0,1,2\}$ as in the proof of Claim~\ref{L.hA.coherent}, we obtain an infinite $D_0\subseteq D$ such that $h_A[D_0]$ and $B_0=h[D_0]$ are disjoint. If $B_0$ is  infinite, then by nonmeagerness of $\cX_n$ there is $C\in \cX_n$ such that $C\cap B_0$ is  infinite.  Then $h_{C} $ and $h_A$ disagree on the infinite set $h^{-1}[C\cap B_0]$, contradicting Claim~\ref{L.hA.coherent}. 
		
		Since we are not assuming that $\ker(\Phi)\supseteq \Fin$, there is a possibility that $B_0$ is finite. Fix $j\in B_0$ such that $h^{-1}[\{j\}]$ is infinite and fix $C\in \cX_n$ such that $j\in C$. Then $\Phi(\{j\})=[D_0]_{\Fin}$, but $D_0$ is also equal (modulo finite) to the $\Phi_*$-image of a subset of $A$ disjoint from $\{j\}$; contradiction. 
	\end{proof}

	\begin{claim}\label{C.AisFinite} The set $A=\bbN\setminus h[D]$ is finite. 
	\end{claim}
	\begin{proof}
		Assume otherwise. Since $\cF_1$ is nonmeager, there is $B\in \cF_1$ such that $A\cap B$ is infinite. The function $h_B\colon \Phi_*(B)\to B$ has cofinite range, therefore $h_B(j)\in A$ for some $j>n$; contradiction. 
	\end{proof}
	
	If $A\in \ker(\Phi)$, then  $B\mapsto h^{-1}(B)$ is a lifting of $\Phi$ on $\Jcont$. We need to consider the case when $A\notin \ker(\Phi)$.  Since $A$ is finite by Claim~\ref{C.AisFinite}, it is straightforward to extend $h$ to  $\Phi_*(A)$ so that $h^{-1}(\{j\})=^*\Phi_*(\{j\})$ for all $j\in A$. Then $B\mapsto h^{-1}(B)$ is a lifting of $\Phi$ on $\Jcont$, as required. 
	We have proven that the function $X\mapsto h^{-1}(X)$ is a continuous lifting of $\Phi$ on the nonmeager ideal $\Jcont$. This completes the proof in the case when $\cI=\Fin$.

	Now consider the general case, when $\cI$ is generated by an increasing sequence of subsets of $\bbN$, $A_n$, for $n\in \bbN$. If there is $m$ such that $A_{n+1}\setminus A_n$ is finite for all $n\geq m$, then the range of  $\Phi$ can be identified with $\cP(\bbN\setminus A_m)/\Fin$ and the conclusion follows from the first part of the proof. 
	We may therefore assume that $A_{n+1}\setminus A_n$ is infinite for infinitely many $n$. By passing to a subsequence, we may assume $A_{n+1}\setminus A_n$ is infinite for all $n$. Therefore $\cI$ is isomorphic to the ideal 
	\[
	\FinO=\{B\subseteq \bbN^2\mid(\exists m) B\subseteq m\times \bbN\}.
	\]
	The remaining part of the proof borrows some of the ideas from the proof of \cite[Theorem~1.9.2]{Fa:AQ}.  It will be convenient to use Greek letters for the elements of~$\bbNN$. For $\alpha$ and $\beta$ in $\bbNN$ we write $\alpha\leq^* \beta$ if $(\forall^\infty j)\alpha(j)\leq \beta(j)$. 
	For $\alpha\in \bbNN$ let $\Gamma_\alpha=\{(m,n)\mid n<\alpha(m)\}$. Then $\cI\cap \cP(\Gamma_\alpha)$ is the ideal of finite subsets of $\Gamma_\alpha$.  Therefore, by the first part of the proof there are $A_\alpha\subseteq \Gamma_\alpha$, $h_\alpha\colon A_\alpha\to \bbN$,  and an ideal $\cJ_\alpha$ on~$\bbN$ such  that $\cJ_\alpha$ intersects every perfect tree-like almost disjoint family nontrivially and the function
	\[
	\Psi_\alpha(X)= h_\alpha^{-1}(X)
	\]
	satisfies (with $\Phi_*$ denoting a lifting of $\Phi$ and abusing notation to denote the ideal of finite subsets of $\bbN^2$ by $\Fin$)  
	\[
	(\Psi_\alpha(X)\Delta \Phi_*(X))\cap \Gamma_\alpha\in \Fin
	\]
	for all $X\in \cJ_\alpha$.

	Define a partition of $[\bbNN]^2=K_0\cup K_1$ by 	setting $\{\alpha,\beta\}\in  K_0$ if and only if 
	\[
h_\alpha((i,j))\neq h_\beta((i,j))\text{ for some $i$ and $j<\min(\alpha(i),\beta(i))$}. 
	\]
	We identify $\alpha$ with $(\alpha,h_\alpha)$  and $\bbNN$ with a subset of the Polish space $\bbNN\times \bbN^{\bbN^2}$, and use this identification to topologize $\bbNN$.  Then $K_0$ is symmetric and open in this topology.

	\begin{claim} \label{C.FinO} There is no uncountable $K_0$-homogeneous $\cX\subseteq \bbNN$ . 
	\end{claim} 
\begin{proof}
	Assume otherwise.  $\OCAT$ implies that every subset of $\bbN$ of cardinality~$\aleph_1$ is $\leq^*$-bounded (\cite{To:Partition}).  By this fact and a little counting argument,  we may assume that there are an uncountable $K_0$-homogeneous set $\cX$ and $\alpha\in \bbNN$ such that $\beta\leq\alpha$ for all $\beta\in \cX$. 
	Therefore $h_\alpha\rs \Gamma_{\beta}=^*h_{\beta}$ and there is $m=m(\beta)$ such that $h_\alpha((i,j))=h_{\beta}(i,j)$ for all $i\geq m$ and $j<\min(\alpha(i), \beta(i))$. 
	Let $m$ be such that the set $Z'=\{x\in Z\mid m(x)=m\}$ is uncountable, and choose $\beta$ and $\gamma$ in $Z'$ such that $h_{\beta}((i,j))=f_{\gamma}(i,j)$ for all $i<m$ (this is possible since there are only finitely many possibilities). Then $\{\beta,\gamma\}\in K_1^m$; contradiction. 
\end{proof}	
	
	By Claim~\ref{C.FinO}, $\OCAT$ implies that $\bbNN=\bigcup\cX_n$ and $[\cX_n]^2\subseteq K_1$ for all $n$. By \cite{Ku:Box}, there are $n$ and $m$ such that for every $\alpha\in \bbNN$ some $\beta\in \cX_n$ satisfies $\alpha(i)\leq \beta(i)$ for all $i\geq m$. By $K_1$-homogeneity, $h=\bigcup_{\beta\in \cX_n} h_\beta$ is a function. Its domain includes $[m,\infty)\times \bbN$.

	We claim that $\{A\subseteq \bbN\mid \Phi_*(A)\Delta h^{-1}(A)\in \FinO\}$ includes an ideal that intersects every perfect tree-like almost disjoint family. 
	Fix a perfect tree-like almost disjoint family $\cA\{J_s\}$, for a family of disjoint finite sets $J_s$, for $s\in \twolo$. Since $\FinO$ is an $F_\sigma$ ideal, by Proposition~\ref{P.general.Jcont.nonmeager} there is $f\in \twoo$ such that $A(f)=\bigcup_n J_{f\rs n}$ belongs to $ \Jcont$. Let $\Theta\colon A(f)\to \cP(\bbN^2)$ be a continuous lifting of $\Phi$ on $\cP(A(f))$. 
	We will use the following characterization of $\FinO$: Some  $C\subseteq \bbN^2$ belongs to $\FinO$ if and only if $C\cap \Gamma_\alpha$ is finite for all $\alpha\in \bbNN$. 
	By Lemma~\ref{L.Xstabilizer}, for every $\alpha\in \bbNN$ and for every $B\subseteq A(f)$ the set $\Theta(B)\Delta h_\alpha^{-1}(B)\cap \Gamma_\alpha$ is finite. Therefore $\Theta(B)\Delta h^{-1}(B)\cap \Gamma_\alpha$ is finite for all $\alpha$, and $\Theta(B)\Delta h^{-1}(B)\in \FinO$. 
	
	Thus $B\mapsto h^{-1}(B)$ is a lifting of $\Phi$ on an ideal that intersects an arbitrary perfect tree-like almost disjoint family, as required. 
		\end{proof}

\begin{proof}[Proof of Theorem~\ref{T.OCAonly}] Fix an isomorphism $\Phi\colon \cPN/\cI'\to \cPN/\cI$ for an analytic ideal $\cI'$ and a countably generated ideal $\cI$ on $\bbN$. 
	By $\OCAT$ and Theorem~\ref{T.OCAsharp-Fin} there is a continuous function $\Theta\colon \cPN\to \cPN$ that lifts $\Phi$ on a nonmeager ideal $\cJ$. By the methods of \cite[\S 1]{Fa:AQ}, there are $A\subseteq \bbN$ and $h\colon A\to \bbN$ such that $X\mapsto h^{-1}(X)$ is a lifting of $\Phi$ on $\cJ$. Since $\Phi$ is an isomorphism and since $\cI'$ is universally Baire, we have $\bbN\setminus A\in \cI$. Since $\Phi$ is an isomorphism, by removing a set in $\cI$ from $A$ we can assure that $h$ is a biiection, and therefore $\cI'$ is Rudin--Keisler isomorphic to $\cI$. This also shows that all automorphisms of $\cPN/\cI$ are trivial. 
\end{proof}
\section{Uniformization modulo $\cI$}

The main result of this section is Proposition~\ref{P.uniformization}  which together with Proposition~\ref{P.general.Jcont.nonmeager} completes the proof of Theorem~\ref{T.OCA-l}. 
Lemma~\ref{L.Roberts} below is based on a lemma due to J.W. Roberts that was one of the (simpler) ideas involved in the solution to Maharam's problem (\cite{Tal:Maharam}). 

\begin{lemma}
	\label{L.Roberts}
	For every $m\in \bbN$, if for  all $s\subseteq m$ there are  $l(s)\geq 2^{2m}$  and an $l(s)$-tuple $\vec n(s)$: $n_0(s)<n_1(s)<\dots < n_{l(s)}(s)$ in $\bbN$, the there are pairwise disjoint sets $A(t)\subseteq \bbN$, for $t\subseteq m$, such that for all $s$ and $t$ some $i=i(s,t)$ satisfies $[n_i(s),n_{i+1}(s))\subseteq A(t)$. 
\end{lemma}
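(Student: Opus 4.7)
My plan is to prove the lemma by induction on $m$, with the base case $m = 0$ handled by setting $A(\emptyset) := [n_0(\emptyset), n_1(\emptyset))$.

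For the inductive step from $m$ to $m+1$, write $N = 2^m$. We have $2N$ families indexed by $\cP(m+1)$, each providing at least $4N^2$ intervals, and must produce $2N$ pairwise disjoint sets $A(t)$. I would fix a distinguished element $\star \in m+1$ and split $\cP(m+1) = S_0 \sqcup S_1$ according to containment of $\star$ (and similarly $T_0 \sqcup T_1$ for the target side), with $|S_\epsilon| = |T_\delta| = N$. For each family $s$, partition its first $4N^2$ intervals into four consecutive groups $Q_{\epsilon\delta}(s)$ of at least $N^2$ intervals each. The inductive hypothesis, applied four times, once per $(\epsilon, \delta) \in \{0,1\}^2$ to the $N$ families in $S_\epsilon$ (identified with $\cP(m)$) with intervals $Q_{\epsilon\delta}(s)$, yields pairwise disjoint sets $A_{\epsilon\delta}(t')$ indexed by $t' \in \cP(m)$, each containing an interval from every $s \in S_\epsilon$. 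Then for $t \in T_\delta$ corresponding to $t' \in \cP(m)$ I would set $A(t) := A_{0\delta}(t') \cup A_{1\delta}(t')$, which contains an interval from every family $s \subseteq m+1$.

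The main obstacle is arranging that the combined sets $A(t)$ are pairwise disjoint. The induction hypothesis only gives disjointness within each group $\{A_{\epsilon\delta}(t') : t' \in \cP(m)\}$; forcing disjointness across different $(\epsilon, \delta)$ requires the supports $\bigcup_{s \in S_\epsilon} Q_{\epsilon\delta}(s)$ to lie in pairwise disjoint regions of $\bbN$ as $(\epsilon, \delta)$ varies. Since quarter boundaries differ across families (a single family can concentrate its intervals in any subregion of $\bbN$), there is in general no "global cut" of $\bbN$ respecting all families simultaneously. I would address this via a preprocessing step that chooses the quarter assignments $Q_{\epsilon\delta}(s)$ adaptively by sorting all endpoints $n_i(s)$ together and using the factor-$4$ slack between the $4N^2$ intervals available per family and the $N^2$ needed per quarter to force an alignment.

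If that approach proves too rigid, an alternative is to abandon the explicit $4$-quadrant structure and perform a direct greedy construction: process $(s,t)$ pairs in a carefully chosen order and at each step select a family-$s$ interval disjoint from previously chosen intervals assigned to other targets, verifying by pigeonhole that the bound $2^{2m} = (2^m)^2$ of available intervals per family (quadratic in the number of targets) absorbs all blockage. Either route hinges on the same quadratic slack, and I expect the bulk of the technical work to lie in the coordination/alignment step rather than in the recursive structure itself.
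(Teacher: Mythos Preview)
Your proposal identifies the right obstacle---cross-group disjointness---but does not resolve it, and that resolution is essentially the whole content of the lemma. The inductive scheme reduces the problem to an ``alignment'' step (find four disjoint regions of $\bbN$, each containing $N^2$ intervals from each relevant family), which is just a smaller instance of the same problem; the factor-$4$ slack does not by itself force such an alignment, since one interval of family $s'$ can overlap arbitrarily many intervals of family $s$. Your greedy alternative has the same defect: when you process a pair $(s,t)$, a single previously placed interval from some other family may block many intervals of $s$, so a naive pigeonhole count on the $2^{2m}$ available intervals of $s$ does not close.

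The paper's proof is a one-shot, non-inductive construction that supplies exactly the missing idea (Roberts's ordering trick). Enumerate the $2^m$ families as $s(0),\dots,s(2^m-1)$ greedily: having chosen $s(0),\dots,s(k-1)$, let $s(k)$ minimize $n_{(k+1)2^m}(\cdot)$ among the remaining families. This ordering forces the blocks $J_k=[n_{k2^m}(s(k)),\,n_{(k+1)2^m}(s(k)))$ to be pairwise disjoint, since for $j>k$ one has $n_{j2^m}(s(j))\ge n_{(k+1)2^m}(s(j))\ge n_{(k+1)2^m}(s(k))$. Now fix any bijection $g\colon \cP(m)\to 2^m$ and set
\[
A(t)=\bigcup_{k<2^m} [\,n_{k2^m+g(t)}(s(k)),\,n_{k2^m+g(t)+1}(s(k))\,).
\]
Disjointness is immediate because the blocks $J_k$ are pairwise disjoint and within each $J_k$ different $t$'s pick different subintervals; the containment requirement holds with $i(s(k),t)=k2^m+g(t)$. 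This greedy ordering is precisely the device that would make either of your approaches work, and once you have it the induction is superfluous.
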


\begin{proof}
	Enumerate $2^m$ as $s(i)$, for $i<2^m$, by using the following algorithm. 
	Choose $s(0)$ so that $n_{2^m}(0)$ is minimal possible. If $s(i)$ for $i<k$ had been chosen, then let $s(k)$ be such that $n_{(k+1)2^m}(s)$ is minimal possible among $\{s<2^m\mid s\neq s(i)$ for $i<k\}$. This describes the construction. For all $k<2^m$ we have $n_{k 2^m}(s(k))\geq n_{k 2^m} (s(k-1))$ hence the intervals $J_{k}=[n_{k2^m}(s(k)), n_{(k+1) 2^m}(s(k)))$, for $k<2^m$,  are  disjoint and the sequence $n_{(k+1) 2^m}(s(k))$, for $k<2^m$,  is nondecreasing. 
	
	Fix a bijection. $g\colon 2^m\to \cP(m)$ and for $t\subseteq m$ let 
	\[
	A(t)=\bigcup_{k<2^m} [n_{k 2^m+g(t)}(s(k)), n_{k 2^m +g(t)+1} (s(k))). 
	\]
	The sets $A(t)$ are disjoint, and for all $s=s(k)$ and $t$ we have 
	\[
	A(t)\supseteq [n_{k 2^m+g(t)}(s(k)), n_{k 2^m +g(t)+1} (s(k)))	,
	\] 
	as required. 
\end{proof}

The fact that Roberts’s lemma cannot be reasonaby extended to infinite families of finite sequences is one of the reasons why the construction of a pathological Maharam submeasure given in \cite{Tal:Maharam} is deep and beautiful. In our situation, we can get away with a little use of $\MAsigma$ (since the existence of a pathological Maharam submeasure is obviously  a $\Sigma^1_2$ statement,\footnote{It is considerably less obvious that it is a $\Delta_0$ statement, but this follows from Talagrand’s result.} $\MAsigma$ would be of no help there).

If $\cK$ is a closed hereditary set, $F$, $G$ are functions whose domains are subsets of $\cP(\bbN)$, and $A\subseteq \bbN$ is such that $\cP(A)\subseteq \dom(F)\cap \dom(G)$, then we write 
\begin{equation}\label{eq.FrsP(A)}
F\rs \cP(A)=^\cK G\rs \cP(A)
\end{equation}
if all $B\subseteq A$ satisfy $F(B)\Delta G(B)\in \cK$
and 
$F\rs \cP(A)=^{\cK\ucup \Fin} G\rs \cP(A)$ if all $B\subseteq A$ satisfy $F(B)\Delta G(B)\in \cK\ucup \Fin$.

\begin{lemma}\label{L.approximation.nonmeager}
	Assume that $\Phi\colon \cPN\to \cPN/\cI$ is a homomorphism, $\cK$ is a closed approximation to $\cI$,  and that $\cY_n$, for $n\in \bbN$, are hereditary sets such that $\bigcup_n \cY_n$ is nonmeager and there is a C-measurable $\cK$-approximation $\Theta_n$ to $\Phi$ on $\cY_n\ucup \cY_n$ for all $n$. 
	Then there is a continuous  $\cK^4$-approximation to $\Phi$ on  a relatively comeager hereditary subset of $\bigcup_n \cY_n$. 
\end{lemma}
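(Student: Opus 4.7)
The plan is to assemble the given countably many C-measurable local approximations into a single continuous $\cK^4$-approximation on a hereditary subset of $\cY:=\bigcup_n\cY_n$ whose complement in $\cY$ is meager, proceeding in three steps: (i) upgrade each $\Theta_n$ to a continuous approximation on individual $\cY_n$'s; (ii) cohere these into a single continuous function outside a meager error; (iii) extract a hereditary domain via a stabilization trick.

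For step (i), since $\cY$ is nonmeager, Baire category forces $N:=\{n:\cY_n\text{ is nonmeager}\}$ to be nonempty and $\bigcup_{n\notin N}\cY_n$ to be meager. For each $n\in N$, the C-measurable (hence Baire-measurable) $\cK$-approximation $\Theta_n$ on $\cY_n\ucup\cY_n=\cY_n^2$ is exactly what Corollary~\ref{C.Stabilization}.(1) demands with $\calH:=\cY_n$, producing a continuous $\cK^2$-approximation $\Psi_n\colon\cPN\to\cPN$ to $\Phi$ on $\cY_n$. For step (ii), fix $n_0\in N$ and use Lemma~\ref{L.HNM.1} to choose a common $k$ making each $\cY_n\ucup\cP(k)$ (for $n\in N$) nonmeager, hereditary, and closed under finite changes. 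Applying Lemma~\ref{P.Theta0Theta1}.(2) to the pair $(\Psi_{n_0},\Psi_n)$ as $\cK^2$-approximations on these sets yields a meager $\cM_n\subseteq\cPN$ outside of which $\Psi_{n_0}$ is a $\cK^4$-approximation on $(\cY_{n_0}\cup\cY_n)\ucup\cP(k)$. Setting $\cM:=\bigcup_{n\in N}\cM_n\cup\bigcup_{n\notin N}\cY_n$, still meager, $\Psi_{n_0}$ is a $\cK^4$-approximation on the relatively comeager set $(\cY\ucup\cP(k))\setminus\cM$.

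The main obstacle is step (iii): this set is not hereditary, and its naive hereditary interior may be meager, since the upward closure of a single meager set in $\cY$ can be non-meager. I plan to resolve this by mimicking the stabilization trick in the proof of Corollary~\ref{C.Stabilization}. Apply Corollary~\ref{C.HereditaryComeager} to the nonmeager hereditary $\calH:=\cY\ucup\cP(k)$ and the comeager $\cX:=\cPN\setminus\cM$, producing a partition $\bbN=A_0\sqcup A_1$ and witnesses $C_j\subseteq A_j$ such that for every $X\in\calH$, both $(X\cap A_0)\cup C_1$ and $(X\cap A_1)\cup C_0$ lie in $\calH^2\cap\cX$. Define the continuous
\[
\tilde\Psi(X):=\bigl(\Psi_{n_0}((X\cap A_0)\cup C_1)\cap\Phi_*(A_0)\bigr)\cup\bigl(\Psi_{n_0}((X\cap A_1)\cup C_0)\cap\Phi_*(A_1)\bigr);
\]
for $X\in\calH$ the homomorphism identity $\Phi_*(X)=\Phi_*(X\cap A_0)\cup\Phi_*(X\cap A_1)$ modulo $\cI$, combined with the $\cK^4$-approximation property of $\Psi_{n_0}$ at the two splits, should yield $\tilde\Psi(X)\Delta\Phi_*(X)\in\cK^4\ucup\Fin$, so that $\calH\supseteq\cY$ (hereditary) is the desired relatively comeager hereditary subset. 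The delicate technical point is carefully tracking the $\cK$-power through the stabilization, since the splits land in $\calH^2$ rather than $\calH$ and each invocation of $\Phi$'s additivity absorbs an $\cI$-error via $\cI\subseteq\cK\ucup\Fin$; securing the stated exponent $4$ may require either a finer accounting or a parsimonious strengthening of step~(i), such as applying the stabilization directly to $\cY_n^2$ (where the $\cK$-approximation is already supplied) rather than to $\cY_n$.
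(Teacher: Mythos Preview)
Your plan assembles the right ingredients, but step~(iii) has a genuine obstruction that neither of your suggested fixes resolves. After steps~(i)--(ii) you only know that $\Psi_{n_0}$ is a $\cK^4$-approximation on $\calH\setminus\cM$, where $\calH=\cY\ucup\cP(k)$. The stabilization via Corollary~\ref{C.HereditaryComeager} sends $X\in\calH$ to splits $(X\cap A_j)\cup C_{1-j}$ lying in $\calH^2\cap\cX$; since $\calH$ is merely hereditary and not an ideal, $\calH^2\supsetneq\calH$, and you have no control over $\Psi_{n_0}$ there. Your proposed remedy of stabilizing $\cY_n^2$ in step~(i) would require an approximation on $(\cY_n^2)^2=\cY_n^4$, which is not supplied; the alternative of ``finer accounting'' cannot help either, since the problem is the \emph{domain} of approximation, not the exponent.

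The paper avoids this trap by reversing the order of the coherence and stabilization steps and, crucially, by basing the stabilization on the \emph{intersection} rather than the union. After enlarging each nonmeager $\cY_n$ to $\tilde\cY_n$ (handling the finite part $\cP(k(n))$ by a separate lifting so that a modified $\tilde\Theta_n$ approximates on $\tilde\cY_n^2$), it sets $\calH=\bigcap_{n\in X}\tilde\cY_n$, which is still nonmeager by Theorem~\ref{T.HNM}, and performs a \emph{single} stabilization with the witnesses $C_0,C_1$ chosen in this $\calH$. Then for \emph{every} $n\in X$ and every $A\in\cY_n$, the split $(A\cap B_j)\cup C_{1-j}$ lies in $\cY_n\cup\tilde\cY_n\subseteq\tilde\cY_n^2$, precisely where the original $\tilde\Theta_n$ already approximates. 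This produces, for each $n\in X$, a continuous $\Upsilon_n$ that is a $\cK^2$-approximation on all of $\cY_n$---hereditary, with no meager exception to remove. Only afterwards is Lemma~\ref{P.Theta0Theta1} invoked to show that a single $\Upsilon_m$ serves as a $\cK^4$-approximation across all the $\cY_n$. The idea you are missing is that the stabilizing witnesses $C_j$ must lie in every $\tilde\cY_n$ simultaneously; selecting them from the union $\cY$, as you do, gives no such leverage over the individual $\cY_n^2$.
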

\begin{proof} Our first task is to prove that we may assume each $\cY_n$ is closed under finite changes. Fix $n$ such that $\cY_n$ is nonmeager. By Lemma~\ref{L.HNM.1}, there is $k(n)$ such that for every $s\Subset [k(n),\infty)]$ the set $\tilde \cY_n=\{A\subseteq \bbN \mid s\cup A\in \cY_n\}$ is hereditary and nonmeager. 
	
	Since we are not assuming that $\ker(\Phi)$ includes $\Fin$, there is some extra work to do. Fix a lifting $F_n\colon \cP(k(n))\to \cP(\bbN)$ of the restriction of $\Phi$  to $\cP(k(n))$. Then define $\tilde \Theta_n\colon \tilde \cY_n\to \cP(\bbN)$ by 
	\[
	\tilde \Theta_n(A)=F_n(A\cap k(n))\cup \Theta_n(A\setminus k(n)). 
	\]
	Since $\cP(k(n))$ is finite, this function is C-measurable and it is a lifting of $\Phi$ on $\tilde \cY_n$.
	
	Let $X=\{n\mid \tilde \cY_n$ is nonmeager$\}$. Since $\bigcup_n \cY_n$ is nonmeager, $X$ is nonempty. 
	
	By Theorem~\ref{T.HNM}, $\calH=\bigcap_{n\in X} \tilde\cY_n$ is nonmeager, and it is clearly closed under finite changes of its elements. 
	
	An argument similar to Corollary~\ref{C.HereditaryComeager} and Corollary~\ref{C.Stabilization}   follows. 
	Let $\cG\subseteq \cPN$ be a dense $G_\delta$ set such that $\tilde \Theta_n\rs G$ is continuous for all $n\in X$ and $G\cap \tilde \cY_n=\emptyset$ for all $n\notin X$. 
	
	Choose disjoint $J_i\Subset\bbN$ and $t_i\subseteq J_i$ such that $\{A\mid (\exists^\infty i)A\cap J_i=t_i\}\subseteq G$. Since $\calH$ is hereditary and nonmeager, there is an infinite set $Y\subseteq \bbN$ such that $\bigcup_{i\in Y} J_i\in \calH$. Let $Y=Y_0\sqcup Y_1$ be a partition into two infinite sets, let $C_0=\bigcup_{i\in Y_0} t_i$, $C_1=\bigcup_{i\in Y_1}  t_i$, $B_0=\bigcup_{i\in Y_0} J_i$, and $B_1=\bbN\setminus B_0$. 
	Then $A\cap B_j=C_j$  for $j=0$ or for $j=1$ implies $A\in \calH$.  If in addition $A\in \cY_n$ for some $n$, then $(A\setminus B_j) \cup (A\cap B_j)\in \cY_n\ucup \cY_n$ for $j=0$ and for $j=1$. 
For $n\in X$ let (using a lifting $\Phi_*$ of $\Phi$)
\[
\Upsilon_n(A)=(\Phi_*(B_1)\cap \Theta_n((A\setminus B_0) \cup (A\cap B_0)))\cup 
(\Phi_*(B_0)\cap \Theta_n((A\setminus B_1) \cup (A\cap B_1))).
\]
Since the arguments of $\Theta_n$ in the definition belong to $G$, $\Upsilon_n$ is continuous. Since~$\Theta_n$ is a $\cK$-approximation to $\Phi$ on $\cY_n\ucup \cY_n$, $\Upsilon_n$ is a $\cK^2$-approximation to $\Phi$ on  a relatively comeager subset of $\cY_n$. 

Fix $m\in X$  and let $\Upsilon=\Upsilon_m$. We claim that $\Upsilon$ is a $\cK^4$-approximation to $\Phi$ on~$\cY_n$ for every $n\in X$. 
This follows from Lemma~\ref{P.Theta0Theta1} and completes the proof. 
\end{proof}

\begin{proposition} \label{P.uniformization} Assume $\OCAT$ and $\MAsigma$. Suppose that $\cI$ is an ideal on~$\bbN$ with closed  approximation $\cK$, $\Phi\colon \cP(\bbN)\to \cP(\bbN)/\cI$ is a homomorphism,
 and the hereditary set $\Jcont^\cK(\Phi)$ intersects every uncountable  tree-like almost disjoint family. 
		Then $\Phi$ has a continuous $\cK^{80}$-approximation  on  $\Jcont^\cK(\Phi)$. 
\end{proposition}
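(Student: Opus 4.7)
The plan is to adapt the template of Theorem~\ref{T.OCAsharp-Fin}, replacing the pointwise local liftings $h_A$ of the $\Fin$-case by the continuous local $\cK$-approximations $\Theta_A\colon\cP(\bbN)\to\cP(\bbN)$ supplied by membership in $\Jcont^\cK(\Phi)$, and exploiting the automatic pointwise $\cK^2\ucup\Fin$-coherence of $\Theta_A$ and $\Theta_B$ on overlaps $\cP(A\cap B)$ (both $\cK$-approximate $\Phi$ there). Fix such a $\Theta_A$ for each $A\in\Jcont^\cK$; by replacing $\Theta_A(X)$ with $\Theta_A(X\cap A)$ we may assume $\Theta_A$ depends only on the trace in $A$. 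Topologize $\Jcont^\cK$ as a separable metric space by identifying $A$ with $(A,(\Theta_A(t))_{t\in\Fin})$ in $\cP(\bbN)\times\cP(\bbN)^{\Fin}$, making each evaluation $A\mapsto\Theta_A(t)$ continuous. Define an open partition in the sense of $\OCAsharp$ (Definition~\ref{Def.OCAsharp}) by letting $U_{t,k,l}$, for triples $(t,k,l)$ with $t\in\Fin$ and $k<l$ in $\bbN$, collect those pairs $\{A,B\}\in[\Jcont^\cK]^2$ with $t\subseteq A\cap B$ and $(\Theta_A(t)\Delta\Theta_B(t))\cap[k,l)\notin\cK^{2}$; each $U_{t,k,l}$ is symmetric and open. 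Assemble $\cV_m$ from those $U_{t,k,l}$ with $k\geq m$ and complexity parameter $\max(t)+l$ at least $m+(4^{m+1}-1)/3$, so $\bigcup\cV_0\supseteq\bigcup\cV_1\supseteq\cdots$, and invoke $\OCAsharp$ (a consequence of $\OCAT$ by \cite[Theorem~3.3]{de2023trivial}).

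The second alternative of $\OCAsharp$ would produce uncountable $Z\subseteq\twoo$, an injection $f\colon Z\to\Jcont^\cK$, and $\rho\colon\Delta(Z)\to\bigcup\cV_m$ with $\rho(s)\in\cV_{|s|}$ and $\{f(a),f(b)\}\in\rho(a\wedge b)$. I would thin $Z$ by $\MAsigma$ via Lemma~\ref{L.MAsl.thin} so that the output intervals of the witnesses $\rho(s)$ fit inside disjoint consecutive intervals $[k_i,k_{i+1})$ indexed by $|s|=i$. Roberts' Lemma~\ref{L.Roberts}, applied level-by-level, produces pairwise disjoint witness sets that assemble, as in Example~\ref{ex.perfect}, into a perfect tree-like almost disjoint family $\cA^{*}=\{D(g)\mid g\in\twoo\}$ with the property that every $D(g)$ meets the output interval of every witness $\rho(a\wedge b)$ for which $a\wedge b$ lies on $g$. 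By hypothesis some $D(g)\in\Jcont^\cK$; picking $a,b\in Z$ with $(a\wedge b)\sqsubset g$ and the input $t$ of $\rho(a\wedge b)$ a subset of $D(g)\cap f(a)\cap f(b)$ (arranged via the thinning), the $\cK^2$-coherence bridge $\Theta_{f(a)}\approx\Theta_{D(g)}\approx\Theta_{f(b)}$ forces $(\Theta_{f(a)}(t)\Delta\Theta_{f(b)}(t))\cap[k,l)\in\cK^{4}$, contradicting $\{f(a),f(b)\}\in U_{t,k,l}$.

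The first alternative of $\OCAsharp$ then yields $\Jcont^\cK=\bigcup_n\cX_n$ with $[\cX_n]^2\cap\bigcup\cV_n=\emptyset$. Since $\Jcont^\cK$ is nonmeager (Theorem~\ref{T.HNM} combined with the hypothesis on uncountable tree-like AD families), some $\cX_n$ is nonmeager. Apply Biba's trick, as in the terminal segment of Theorem~\ref{T.OCAsharp-Fin}: for each $(t,[k,l))$ of interest, recursively fix a value $v\subseteq[k,l)$ for which $\{A\in\cX_n\mid\Theta_A(t)\cap[k,l)=v\}$ remains nonmeager; the $\cV_n$-exclusion guarantees this recursion terminates within at most $n+(4^{n+1}-1)/3$ steps, after which the value at each $(t,[k,l))$ is forced (modulo a $\cK^{2}$-error) on a relatively comeager subset of $\cX_n$. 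Assemble the pinned data into a continuous $\Theta\colon\cP(\bbN)\to\cP(\bbN)$; a final use of Lemma~\ref{P.Theta0Theta1}, compounded with the accumulated factors from overlap coherence, Biba elimination, and the passage from $\cX_n$ to the full hereditary set $\Jcont^\cK$, certifies that $\Theta$ is a continuous $\cK^{80}$-approximation to $\Phi$ on all of $\Jcont^\cK$.

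The main obstacle will be refuting the second alternative of $\OCAsharp$: assembling the perfect tree-like family $\cA^{*}$ so that the single $D(g)\in\Jcont^\cK$ supplied by the hypothesis simultaneously bridges at least one conflicting pair $\{f(a),f(b)\}$, with the input $t$ fitting inside $D(g)\cap f(a)\cap f(b)$ and the output interval $[k,l)$ intersecting $D(g)$. This requires a delicate interlocking of the $\MAsigma$-thinning of $Z$, the Roberts-style level-by-level disjoint selection, and the coherence-on-overlaps inequality, all balanced against the finite/tail nature of the approximation errors. Tracking the accumulated powers to land precisely at $\cK^{80}$ is then routine but tedious bookkeeping.
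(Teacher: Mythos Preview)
Your outline has the right skeleton—topologize $\Jcont^\cK$, apply $\OCAsharp$, rule out alternative (2) via $\MAsigma$ + Roberts + the tree-like AD hypothesis, then run Biba's trick—but two load-bearing pieces are either missing or miscalibrated.

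First, your $\cV_m$ is built from \emph{single} open sets $U_{t,k,l}$ (one finite $t$, one interval $[k,l)$), with the threshold $m+(4^{m+1}-1)/3$ lifted verbatim from the $\Fin$ case. That threshold makes sense when a conflict is a finite \emph{set of points} that one splits via \cite[Lemma~3.8]{de2023trivial}; here a single conflict is an interval-localized $\cK$-largeness statement, and Roberts' Lemma~\ref{L.Roberts} needs, at level $m$ of $\Delta(Z')$, a supply of at least $2^{2m}$ disjoint sub-intervals \emph{per node} so that it can assign one to each of the $2^m$ branches. The paper therefore defines each $U\in\cV_m$ by a sequence $m\leq n_0^U<\cdots<n_{2^{2m}}^U$ and requires conflict on \emph{every} $[n_i^U,n_{i+1}^U)$ simultaneously. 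With your single-interval $U_{t,k,l}$ there is nothing for Roberts' Lemma to act on, and the ``level-by-level'' application you invoke cannot get started.

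Second, the coherence you rely on is the trivial pointwise one: $\Theta_A$ and $\Theta_B$ both $\cK$-approximate $\Phi$ on $\cP(A\cap B)$, hence differ by $\cK^2\ucup\Fin$. The $\Fin$ tail here depends on the argument, so your bridge $\Theta_{f(a)}\approx\Theta_{D(g)}\approx\Theta_{f(b)}$ does not localize to the interval $[k,l)$ witnessing the conflict. The paper instead invokes Lemma~\ref{L.Xstabilizer}, which upgrades coherence to a \emph{uniform} statement: a single $k=k(A,B)$ such that \emph{all} $s\subseteq(A\cap B)\setminus k$ satisfy $(F^A(s)\Delta F^B(s))\setminus k\in\cK^{10}$. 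This is precisely why the conflict threshold in the paper is $\cK^{20}$ rather than $\cK^2$: two $\cK^{10}$-bridges through $A(h)$ yield $\in\cK^{20}$, which directly contradicts $\notin\cK^{20}$. Your $\cK^2$ versus $\cK^4$ calibration would not contradict even if the bridge were uniform, since $\cK^2\subseteq\cK^4$. Finally, your Biba-trick endgame (``assemble the pinned data into a continuous $\Theta$'') skips real content: the paper passes through a Jankov--von Neumann uniformization of an explicit Borel relation built from $K_1$-homogeneity, and then applies Lemma~\ref{L.approximation.nonmeager} with $\cK^{20}$ to reach $\cK^{80}$.
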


\begin{proof}
	For $A$ and $B$ in $\Jcont^\cK(\Phi)$ and $D\subseteq \bbN$ we say that $F^A$ and $F^B$ \emph{conflict} on~$D$ if there is $s\subseteq A\cap B$ such that 
	\[
	F^A(s)\Delta F^B(s)\cap D \notin \cK^{20}.
	\] 
	Since all $F^A$ and $F^B$ are continuous, $A$ and $B$ conflict on $D$ if and only if there is $s\Subset A\cap B$ such that $F^A(s)\Delta F^B(s)\cap D \notin \cK^{20}$.
	Identify $A\in \Jcont^\cK$ with the pair $(A,\{(s,F^A(s))\mid s\Subset A\})$. 
	This identifies $\Jcont^\cK$ with a subset of $\cP(\bbN)\times \cP(\bbN)^{\Fin}$, and endows  $\Jcont^{\cK}$ with the subspace topology, denoted  $\tau$. 
	
	\begin{claim} \label{C.conflict} For every $D\subseteq \bbN$, the set of all $\{A,B\}$ that conflict on $D$ is symmetric and  $\tau$-open. 
	\end{claim}
	
	\begin{proof}  Symmetry is obvious from the definition.  To prove that this set is open fix $A$ and $B$ that conflict on $D$.  Since $\cK^{20}$ is closed, we can fix open subsets $U_0$ and $U_1$ of $\cP(\bbN)$ such that $F^A(s)\cap D\in U_0$, $F^B(s)\cap D\in U_1$, and for all $X_0\in U_0$ and $X_1\in U_1$ we have $X_0\Delta X_1\notin \cK^{20}$.  Then the  sets $V_j=\{C\mid s\Subset C, F^C(s)\in U_j\}$  for $j=0,1$ are $\tau$-open neighbourhoods of $A$ and $B$ and $A’$ and $B’$ conflict on $D$ for all $A’\in U_0$ and $B’\in V_1$.
	\end{proof}
	
	By Lemma~\ref{L.Xstabilizer}, for all $A$ and $B$ in $\Jcont^\cK$ there is $k=k(A,B)\in \bbN$ such that for all $s\subseteq (A\cap B)\setminus k$ we have $(F^A(s)\Delta F^B(s))\setminus k\in \cK^{10}$. 
	For $m\in \bbN$, let $\cV_m$ be the set of all $U\subseteq [\Jcont^\cK]^2$ such that there are $m\leq n_0^U<n_1^U<\dots <n_{2^{2m}}^U$ for which $U$ is the set of all pairs $\{A.B\}\in [\Jcont^\cK]$ which conflict on $[n_i^U,n_{i+1}^U)$ for all $i<2^{2m}$. 
	By Claim~\ref{C.conflict}, each $\cV_m$ is a union of symmetric open subsets of $[\Jcont^\cK]^2$ and clearly $\cV_m\supseteq \cV_{m+1}$, hence these sets are as in the statement of $\OCAsharp$. 
	
	Next we verify that one of the alternatives of $\OCAsharp$ cannot hold. 
	
	\begin{claim}\label{C.no-triple.1}
		There is no triple $(Z,f,\rho)$ such that $f\colon Z\to \Jcont^\cK$, $Z\subseteq \twoo$ is uncountable,   and $\rho\colon \Delta(Z)\to \bigcup_m \cV_m$
 such that $\rho(s)\in \cV_{|s|}$ for all $s$ and 
		\[
		\{f(x),f(y)\}\in \rho(x\wedge y)
		\]
		for all distinct $x,y$ in $Z$. 
	\end{claim}
	\begin{proof}
		Assume otherwise and fix $Z$, $f$, and $\rho$. For $s\in \Delta(Z)$ let 
		\[
		I(s)=[|s|, n_{2^{2|s|}} ^{\rho(s)}). 
		\]
		By $\MAsigma$ and Lemma~\ref{L.MAsl.thin} there are an uncountable $Z'\subseteq Z$ and an increasing sequence $\{k_i\}$ such that for every $s\in \Delta(Z')$ some $m(s)$  satisfies 
		\[
		I(s)\subseteq [k_{m(s)}, k_{m(s)+1})
		\] 
		and $\cS_m=\{s\in \Delta(Z')\mid m(s)=m\}$ is the $m$-th level of the tree $(\Delta(Z'),\sqsubseteq)$. 
		
		Fix $m$. For each $s\in \cS_m$ we have $|s|\leq m$ and therefore $\rho(s)$ is given by $m\leq n_0^s<n_1^s<\dots <n_{l(s)}^s$ for $l(s)\geq 2^{2m}$. By Lemma~\ref{L.Roberts} there are disjoint $A(t)\subseteq [k_m,k_{m+1})$, for $t\in \{0,1\}^m$, such that for all $s\in \cS_m$ and $ t\in \{0,1\}^m$,  for some $i$ we have $A(t)\supseteq [n_i^s, n_{i+1}^s)$. The sets 
		\[
		A(h)=\bigcup_m A(h\rs m)
		\]
		for $h\in \twoo$, satisfy  $A(h)\cap A(h')\subseteq k_{\Delta(h,h')+1}$ for all distinct $h$ and $h'$. Therefore $A(h)$, for $h\in \twoo$, is an uncountable  tree-like almost disjoint family. By the assumption, $A(h)\in \Jcont^\cK$  for some $h$. By Lemma~\ref{L.Xstabilizer}, for every $A\in \Jcont^\cK$ there is $k(A)$  such that for all $s\subseteq (A\cap A(h))\setminus k$ we have $(F^A(s)\Delta F^{A(h)}(s))\setminus k\in \cK^{10}$. 
		Let $k$ be such that $Z''=\{z\in Z'\mid k(f(z))=k\}$ is uncountable. Choose $x$ and $y$ in $Z''$ such that $\Delta(x,y)>k$. Then for all $s\subseteq (A(h)\cap f(x)\cap f(y))\setminus k$  we have (see \eqref{eq.FrsP(A)})
		\[
		F^{f(x)}(s)\setminus k=^{\cK^{10}} F^{A(h)}(s)\setminus k =^{\cK^{10}} F^{f(y)} (s)\setminus k . 
		\]
		However, $A(h)\cap \rho(x\wedge y)$ includes an interval of the form $[n_i^s, n_{i+1}^s)$, for $s=\rho(x\wedge y)$ on which $F^{f(x)}$ and $F^{f(y)}$  conflict.  The minimum of this interval is at least $|s|\geq k$; contradiction. 
	\end{proof}

	Since $\OCAT$ implies $\OCAsharp$, by Claim~\ref{C.no-triple.1} we conclude that there are sets $\cX_n$, for $n\in \bbN$, such that  $\Jcont^\cK=\bigcup_n \cX_n$ and $[\cX_n]^2\cap \cV_n=\emptyset$  for all $n$. 

	We will use a version of Biba's trick similar to the one in  the final part of the proof of Theorem~\ref{T.OCAsharp-Fin}. 
			Let $n$ be such that $\cX_n$ is nonmeager in the original topology on $\cP(\bbN)$. We attempt to recursively choose an increasing sequence $n_i$, as well as $s_i\subseteq [n_{i-1},n_{i})$ (with $n_{-1}=0$), $u_i$, and $v_i$  for $i\in \bbN$ such that the following holds for all $m$.
	\begin{enumerate}
		\item The set $\cF_{0,m}=\{A\in \cX_n\mid F^A(s_i)=u_i$ for all $i<m\}$ is nonmeager. 
		\item The set $\cF_{1,m}=\{B\in \cX_n\mid F^B(s_i)=v_i$ for all $i<m\}$ is nonmeager.
		\item $u_i\Delta v_i\notin \cK^{20}$ for all $i<m$. 
	\end{enumerate} 
	Since $[\cX_n]^2\cap \bigcup\cV_n=\emptyset$, a recursive construction of such sequences has to stop at a finite stage (more precisely, before the $2^{2n}$-th stage). We therefore have $m$ (possibly $m=0$), $n_i,s_i,u_i, v_i$, for $i<m$ such that for all $s\subseteq [n_{m-1},\infty)$ the set 
	\[
	\{(A,B)\in \cF_{0,m}\times \cF_{1,m}\mid s\subseteq A\cap B, F^A(s)\Delta F^B(s)\notin \cK_n^{20}\}
	\]
	is meager.  By increasing $n_{m-1}$ if needed, we can assure that for every $s\subseteq [n_{m-1}, \infty)$ both sets $\{A\in \cF_{0,m}\mid s\subseteq A\}$ and $\{B\in \cF_{1,m}\mid s\subseteq B\}$ are nonmeager.

	By Lemma~\ref{L.HNM.1}, for a  large enough $k\geq n$, for every interval of the form $[k,l)$ for $l>k$ there is an $A(l)\in \cX_n$ such that $[k,l)\subseteq A(l)$. Identify~$\cX_n$ with a subset of $\cP(\bbN)\times \cP(\bbN)^{\Fin}$ as in the definition of the topology $\tau$  (see the paragraph preceding Claim~\ref{C.conflict}). 
Let 
\[
\cY=\{(X,Y)\in \cPN^2\mid(\forall j)(\forall^\infty l) \min(Y\Delta F^{A(l)} (X\cap [k,l)))\cap j\in \cK_n^{20} \}.
\]
This is a Borel (more precisely, $F_{\sigma\delta}$) set. We claim that the section $\cY_X$ is nonempty for every $X\in \cX_n$. Fix $X$. Since $\cP(j)$ is finite, for every $j$ the set 
\[
T_{X,j}=\{s\subseteq j\mid (\forall^\infty l) F^{A(l)} (X\cap [k,l))\cap j=s\}
\] 
is nonempty. Then $T_X=\bigcup_j T_{X,j}$ is a finitely branching tree with respect to the end-extension,  and $T_{X,j}$ is its $j$-th level. Since each $T_{X,j}$ is nonempty, $T_X$ has an infinite branch $Y$. Clearly $Y$  belongs to $\cY_X$. 

Because $[\cX_n]\cap \cV_n=\emptyset$ and because $k\geq n$, for all $(X,Y)\in \cY$ such that $X\in \hat \cX_n$ and $A\in \cX_n$ satisfies $X\subseteq A$ we have $F^A(X)\Delta Y\in \cK_n^{20}$. 

The set $\cZ=\{X\mid \cY_X\neq \emptyset\}$ is analytic. Since it is a continuous image of $\cY$, 
by the Jankov, von Neumann uniformization theorem (\cite[18.A]{Ke:Classical}) there is a C-measurable selection $\Theta_n\colon \cZ\to \cPN$ for $\cY$. By the previous paragraph, for every $X\in \cZ\cap \hat \cX_n$ and $A\in \cX_n$ such that $X\subseteq A$ we have that $\Theta_n(X)\Delta F^A(X)\in \cK_n^{20}$. 

By Lemma~\ref{L.approximation.nonmeager} applied to $\cK_n^{20}$, there is a continuous $\cK_n^{80}$-approximation to $\Phi$ on a relatively comeager, hereditary subset of $ \Jcont$. 
\end{proof}

\section{Proofs of Theorem~\ref{T.OCA-l} and Theorem~\ref{T.Main}}
\label{S.Proofs}

\begin{proof}[Proof of Theorem~\ref{T.OCA-l}] Suppose that $\cI$ is countably 80-determined by closed approximations $\cK_n$, for $n\in \bbN$ and that $\Phi\colon \cPN\to \cPN/\cI$ is a homomorphism. 
	By Lemma~\ref{Lemma.Jsigma.nonmeager}, $\Jcont^{\cK_n^{16}}(\Phi) $
	has nonempty intersection with every perfect tree-like almost disjoint family.
Therefore each one of these ideals is nonmeager.

By Proposition~\ref{P.uniformization}, $\Phi$ has a continuous $\cK_n^{80}$ approximation on $\Jcont$. 
Since this is true for all $n$, Lemma~\ref{L.countable-approximation-lifting} applied with $\cB_n=\cK_n^{80}$ implies that $\Phi$ has a continuous lifting on a relatively comeager subset $\cX$ of $\Jcont$. 
Finally, by Corollary~\ref{C.Stabilization} there are a partition $\bbN=A_0\sqcup A_1$ and sets $C_0\subseteq A_0$ and $C_1\subseteq A_1$ such that for every $X\in \Jcont$ both $(X\cap A_0)\cup C_1$ and $(X\cap A_1)\cup C_0$ belong to $\Jcont^2\cap \cX$. But $\Jcont$ is an ideal, hence $\Jcont^2=\Jcont$, and the argument from Corollary~\ref{C.Stabilization} gives a continuous lifting of $\Phi$ on the nonmeager ideal $\Jcont$.  
\end{proof}

\begin{proof}[Proof of Theorem~\ref{T.Main}] Suppose that $\Phi\colon \cPN/\cI'\to \cPN/\cI$ is an isomorphism between analytic quotients. Each of the ideals listed in the statement of this theorem is, by Lemma~\ref{L.StronglyCountablyDetermined}, strongly countably determined. Therefore Theorem~\ref{T.OCA-l} implies that $\Phi$ has a continuous lifting on an ideal that intersects every perfect tree-like almost disjoint family. Such an ideal is by Theorem~\ref{T.HNM} (and Example~\ref{ex.perfect}) nonmeager.  Since $\Phi$ is an isomorphism, this implies that $\Phi$ has a continuous lifting. If $\cI$ is a nonpathnological  anaytic P-ideal, then it $\Phi$ has a completely additive lifting by \cite[Theorem~1.9.1]{Fa:AQ}. If $\cI$ is a nonpathological $F_\sigma$ ideal or one of the ideals $\NWD(\bbQ)$, $\NULL(\bbQ)$, or $\cZ_W$, then $\Phi$ has a completely additive lifting by \cite{KanRe:New}, \cite{KanRe:Ulam}. 
\end{proof}
\section{Concluding remarks}

A metric version of Biba's trick was used in \cite[Proposition~2.10]{de2024metric}. 

The most important question related to this work is whether lifting results from forcing axioms can be extended to analytic ideals of complexity higher than~$F_{\sigma\delta}$. The most embarrassing question along these lines is whether some forcing axiom---\emph{any} forcing axiom, including $\textrm{MM}^{+++}$ of \cite{Viale:Category}---implies that all automorphisms of $\cP(\bbN^2)/\Fin\times\Fin$ are trivial, where $\Fin\times\Fin$ is the ideal of all $A\subseteq \bbN^2$ such that  all but finitely many of the vertical sections of $A$ are finite. 

I do not know whether the assumptions of Theorem~\ref{T.OCA-l}  can be weakened to $\OCAT$.  The use of $\MAsigma$ in Proposition~\ref{P.uniformization} appears to be substantial. See the remarks preceding   Lemma~\ref{L.approximation.nonmeager}. This of course leaves a possibility that a completely different proof circumvents a need for $\MAsigma$. 
Theorem~\ref{T.OCAsharp-Fin} suffices for all topological applications of the OCA lifting theorem given in \cite[\S 4]{Fa:AQ}. Thus $\MA$ is not needed for any of these results. The conclusion of the analogous theorem proven from $\OCAT$ and $\MA$ is that $\Phi$ has a continuous lifting on an ideal that is ccc over $\Fin$, a property introduced in \cite{Fa:AQ} stronger than being nonmeager. Having this property is useful for some applications  to the rigidity of \v Cech--Stone remainders (for recent examples see  \cite{dow2024autohomeomorphisms} and \cite{dow2024non}). Closer to the spirit of the present paper are the results on \v Cech--Stone remainders in \cite{vignati2024weak}, proved using $\OCAT$ and $\MA$. These results rely on deep, difficult, and general results from \cite{vignati2022rigidity}, and I don't know whether they can be proven from $\OCAT$ alone.

By adding $\MAsigma$ to the assumptions and an additional argument that involves \cite[Lemma~2.3]{Ve:OCA} ($\MAsigma$ suffices for the conclusion of this lemma), it is possible to improve the conclusion of Theorem~\ref{T.OCAsharp-Fin} to 
`$\Phi$ has a continuous lifting on an ideal $\Jcont$ that is ccc over $\Fin$'.  
Contrary to what had been conjectured in~\cite{Fa:AQ}, no assumption short of $0=1$ suffices to improve the conclusion of Theorem~\ref{T.OCAsharp-Fin} to `$\Phi$ has an additive lifting'. The Stone duality reformulation of this assertion is more natural ($\beta\bbN$ is the \v Cech--Stone compactification of $\bbN$): There is a continuous map $f\colon \beta\bbN\setminus \bbN\to \beta\bbN\setminus \bbN$ that cannot be continuously extended to a map from $\beta\bbN$ into $\beta\bbN$. 
This was proved in \cite{dow2014non}.

\bibliography{ifmainbib}
\bibliographystyle{plain}

\end{document}